\newcommand{\fg}{{\mathfrak{g}}}
\newcommand{\bp}{{\mathbb{P}}}
\newcommand{\fh}{{\mathfrak{h}}}
\newcommand{\fz}{{\mathfrak{z}}}
\newcommand{\fsl}{{\mathfrak{sl}}}
\DeclareMathOperator{\Spec}{Spec}
\DeclareMathOperator{\Hom}{Hom}
\newcommand{\fn}{{\mathfrak{n}}}
\newcommand{\cg}{\mathcal{G}}
\newcommand{\End}{{\rm End} \,}
\newcommand{\Id}{{\rm Id} \,}
\newcommand{\rk}{{\rm rk} \,}
\newcommand{\gr}{{\rm Gr} \,}
\newcommand{\ad}{{\rm ad} \,}
\newcommand{\bc}{\mathbb{C}}
\newcommand{\bz}{\mathbb{Z}}
\newcommand{\diag}{{\rm diag} \,}
\newcommand{\ft}{\mathfrak{t}}
\newcommand{\bq}{\mathbb{Q}}
\newcommand{\mS}{\mathcal{S}}
\newtheorem{defn}[subsection]{Definition}
\newtheorem{thm}[subsection]{Theorem}
\newtheorem*{thm*}{Theorem}
\newtheorem{lem}[subsection]{Lemma}
\newtheorem{prop}[subsection]{Proposition}
\newtheorem{cor}[subsection]{Corollary}
\newtheorem*{rem}{Remark}
\newtheorem*{example}{Example}
\DeclareMathOperator{\spann}{span}
\begin{document}

\title{Bethe subspaces and wonderful models for toric arrangements}
\author{Aleksei Ilin and Leonid Rybnikov}

\begin{abstract}
    We study the family of commutative subspaces  in the trigonometric holonomy Lie algebra $t^{\mathrm{trig}}_{\Phi}$, introduced by Toledano Laredo, for an arbitrary root system $\Phi$. We call these subspaces \emph{Bethe subspaces} because they can be regarded as quadratic components of \emph{Bethe subalgebras} in the Yangian corresponding to the root system $\Phi$, that are responsible for integrals of the generalized XXX Heisenberg spin chain. Bethe subspaces are naturally parametrized by the complement of the corresponding toric arrangement . We prove that this family extends regularly to the minimal wonderful model $X_{\Phi}$ of the toric arrangement described by De Concini and Gaiffi \cite{cg}, thus giving a compactification of the parameter space for Bethe subspaces. For classical types $A_n, B_n, C_n, D_n$, we show that this extension is faithful. As a special case, when $\Phi$ is of type $A_n$, our construction agrees with the main result of Aguirre--Felder--Veselov on the closure of the set of quadratic Gaudin subalgebras. Our work is also closely related to, and refines in this root system setting, a parallel compactification result of Peters \cite{jp} obtained for more general toric arrangements arising from quantum multiplication on hypertoric varieties. Next, we show that the Bethe subspaces assemble into a vector bundle over $X_{\Phi}$, which we identify with the logarithmic tangent bundle of $X_{\Phi}$. Finally, we formulate conjectures extending these results to the setting of Bethe subalgebras in Yangians and to the quantum cohomology rings of Springer resolutions. We plan to address this in our next papers.

\end{abstract}

\maketitle

\section{Introduction}

\subsection{Holonomy Lie algebras} A holonomy Lie algebra is a Lie algebra defined by generators and relations which ensure the flatness of a certain  KZ type connection. At the same time, defining relations of a holonomy Lie algebra ensure commutativity of a certain set of Hamiltonians, i.e. the set of coefficients of a connection. 
There are different versions of holonomy Lie algebras, 
see for example \cite{afv}, \cite{at}, \cite{ikr}. 

In the present paper, we work with the \emph{trigonometric} holonomy Lie algebra $\mathfrak{t}_{\Phi}^{trig}$ introduced by Toledano Laredo in \cite{tl} corresponding to an arbitrary finite root system $\Phi$ of rank $n$ with a root lattice $R$.
The trigonometric holonomy Lie algebra is generated by the elements $t_{\alpha}, \alpha \in \Phi^+$  and $\tau(h), h \in \fh  = \Hom_{\bz}(R,\bc) \simeq \bc^n$ with relations 
$$[t_{\alpha}, \sum_{\beta \in \Psi} t_{\beta}] = 0 \quad \text{for any closed rank 2 root subsystem $\Psi\subset \Phi$};$$ 
$$ \tau(c_1h_1+ c_2h_2) = c_1\tau(h_1)+c_2\tau(h_2) \quad \text{for any $h_1, h_2 \in \fh,\ c_1,c_2\in\mathbb{C}$}; $$
 $$[\tau(h_1), \tau(h_2)] = 0 \quad \text{for any $h_1, h_2 \in \fh$};$$
$$[t_{\alpha}, \delta(h)] = 0 \quad \text{for any $h \in \fh$ such that $\alpha(h) = 0$}.$$ Here $\delta(h) = \tau(h) - \dfrac{1}{2} \sum_{\alpha \in \Phi^+} \alpha(h) t_{\alpha}$.
This relations ensure the flatness of the trigonometric Casimir connection on the torus $T = \Hom_{\bz}(R, \bc^{\times})$ with regular singularities at the arrangement of subtori determined by $\Phi$ (see \cite{tl}).

$$ \nabla_{trig} =  d -\sum_{\alpha \in \Phi^+} \frac{d \alpha}{e^{\alpha} - 1} t_{\alpha} - d h^i \tau(h_i) $$
Here, for any element $\alpha\in R$, $e^{\alpha}$ is the monomial function on $T$ corresponding to $\alpha$, and for any $\chi\in \fh^*$, $d\chi=\dfrac{de^\chi}{e^\chi}$ is the corresponding left-invariant $1$-form on $T$, $h^i$ is a basis of $\fh^*$, $h_i$ is the dual basis of $\fh$.

\subsection{Bethe subspaces}
There is a remarkable subspace $Q(C)$ of commuting elements of $\ft^{trig}_\Phi$ spanned by elements

$$BH(C,h) = \tau(h) - \sum_{\alpha \in \Phi^+} \dfrac{e^{\alpha}(C)\alpha(h)}{e^{\alpha}(C) - 1}  t_{\alpha}$$ 
where $h \in \fh$ and $C \in T^{reg} = T \setminus \cup_{\alpha \in T} (e^{\alpha} = 1)$ is a fixed regular element of $T$. We call these elements {\em Bethe Hamiltonians}.
Let $BH_i(C) = BH(C,h_i)$ be the basis elements of $Q(C)$.
Then one can rewrite trigonometric Casimir connection  in the form (see \cite[Proposition 2.2]{tl}):
\begin{equation*}\label{eq:trig-valerio}
     \nabla_{trig} = d - \tau(h^i)d h^i + \sum_{\alpha \in \Phi^+} \frac{d \alpha}{1 - e^{- \alpha}} t_{\alpha} = d - \sum_{i=1}^n BH_i(C) dh^i.
\end{equation*}

We call the subspace $Q(C)$ spanned by elements $B(C,h), h \in \fh$ a {\itshape Bethe subspace}. 
We have $\dim Q(C)=n$ for any $C\in T^{reg}$, so we obtain the map
$$\psi: T^{reg} \to Gr(n, \dim (t_{\Phi}^{trig})^{1})$$
taking $C\in T^{reg}$ to $Q(C)$

The main problem we address in the present paper is to to describe the closure of $\psi(T^{reg})$, i.e. we describe all possible limits of Bethe Hamiltonians and the corresponding parameter space. 

\subsection{De-Concini Gaiffi wonderful model for toric arrangements}

Let $\overline{T}$ be the smooth projective toric variety corresponding to the fan of Weyl chambers of $\Phi$. Consider the collection of subtori $T_{\alpha} = \{ C \in T \, | \, e^\alpha(C) = 1 \} $ and their closures $\overline{T_{\alpha}}$ inside $\overline{T}$. 

We consider the following {\em building set}: the set of locally indecomposable connected components of all possible intersections of  $\overline{T_{\alpha}}$ (see subsection \ref{pwm}). This is the minimal possible building set in the sense of De Concini and Gaiffi \cite{cg}.  Blowing up the toric variety $\overline{T}$ at the elements of the building set in the dimension increasing order, we obtain the (minimal) projective wonderful model $X_{\Phi}$. 
The  fiber of $X_\Phi$ over the unity point $1\in T\subset\overline{T}$ is the De Concini-Procesi wondeful model $M_\Phi$ for the hyperplane arrangement in the projective space $\mathbb{P}(\fh)$ determined by the root system $\Phi$, cf. \cite{DCP}.

\subsection{Extension of Bethe subspaces to $X_\Phi$} 
The first main result of the present paper is
\subsection*{Theorem A}
\emph{ \begin{enumerate}
     \item The family of trigonometric Bethe Hamiltonians extends to $X_{\Phi}$, i.e. the above morphism $\psi$ extends to a regular morphism $\overline{\psi}: X_\Phi\to Gr(n, \dim (t_{\Phi}^{trig})^{1}).$
     \item The morphism $\overline{\psi}$ onto its image is finite and generically bijective.
     \item For classical root systems $A_n,B_n,C_n$ and $D_n$, the morphism $\overline{\psi}$ is bijective onto its image.
  \end{enumerate}}

This means that the compactified parameter space for commutative Bethe subspaces is $X_\Phi$ up to nomalization. 

\begin{rem}
The first statement of Theorem~A is similar to the main result of Jeremy Peters \cite{jp} for arbitrary toric arrangement. However, in \cite{jp} the extension of the morphism $\psi$ is made to the closure corresponding to the particular building set described by Moci in \cite{m}. Our statement is then a refined version of Peters theorem in the case of an arrangement given by a root system, namely, we show that the map can be extended to the \emph{minimal} compactification of root toric arrangement corresponding to the minimal building set.
\end{rem}

\subsection{Trigonometric holonomy Lie algebra for $\Phi = A_n$}
We show that $\ft_{A_n}^{trig}$ is closely related to the algebra $\ft_{A_{n+1}}$ (modulo its center)  -- the rational holonomy Lie algebra, see section \ref{rathol} for definition.
Moreover,  Bethe Hamiltonians corresponds to Gaudin Hamiltonians considered in \cite{afv}, see Proposition \ref{a_n_case}.
According to \cite{afv}, the compactification of the family of Gaudin Hamiltoians is parameterized by $\overline{M}_{0,n+2}$ -- Deligne-Mumford compactification of the moduli space of stable rational curves with $n+2$ marked points.
This agrees with the well known fact that $\overline{M}_{0,n+2}$ is isomorphic to $X_{A_n}$, going back to Kapranov \cite{K} and Losev-Manin \cite{LM}, see e.g. \cite[Proposition~3.1]{imr}.

\subsection{Bethe bundle over $X_\Phi$}
In Section~6 we globalize the family of Bethe subspaces $Q(C)\subset (t^{\mathrm{trig}}_{\Phi})^{(1)}$ and show that they assemble into a vector bundle over the wonderful compactification $X_{\Phi}$. Namely, the map 
$\overline{\psi} : X_{\Phi} \to \gr(n,(t^{\mathrm{trig}}_{\Phi})^{(1)})$ 
determines the \emph{Bethe bundle} 
$\mathcal{Q} := \overline{\psi}^{*}(\mathcal{T})$, where $\mathcal{T}$ is the tautological rank $n$ bundle. Following the ideas of \cite{afv}, we prove

\subsection*{Theorem B}
\emph{The Bethe bundle $Q$ is naturally isomorphic to the logarithmic tangent bundle
\[
    \mathcal{Q} \;\cong\; T X_{\Phi}(-\log D),
\]
where $D = X_{\Phi} \setminus T^{\mathrm{reg}}$ is the boundary divisor.}

The immediate corollaries describe the restriction of $\mathcal{Q}$ to the central fiber $M_{\Phi}$ and identify it with the bundle of Gaudin subalgebras studied in~\cite{afv2}.  
More precisely, Theorem~B implies that the restriction of $Q$ to $M_{\Phi}$ is the sheaf $\mathcal{H}$ of Gaudin subalgebras and yields an explicit exact sequence
\[
0 \longrightarrow \mathcal{O}_{M_{\Phi}} \longrightarrow \mathcal{H} \longrightarrow 
T M_{\Phi}(-\log D) \longrightarrow 0,
\]
where $1 \mapsto c_{\Phi} = \sum_{\alpha\in\Phi^{+}} t_{\alpha}$ thus generalizing \cite[Theorem 3.3]{afv} from type A to arbitrary finite type root system.  

Any representation of $t^{\mathrm{trig}}_{\Phi}$ induces, via the symmetric algebra of $\mathcal{Q}$, a quotient sheaf of commutative algebras on $X_{\Phi}$ that is a quotient of the polynomials on its logarithmic cotangent bundle. Following \cite{afv}, we show that the kernel ideal of this quotient is stable with respect to the Poisson bracket. Thus the zero set of that ideal in the logarithmic cotangent bundle (i.e. the support of $\mathcal{Q}$ in the representation) is always coisotropic, and hence Lagrangian for finite-dimensional representations.

\subsection{Some remarkable representations of $\mathfrak{t}_\Phi^{trig}$} The trigonometric holonomy Lie algebra has several interesting homomorphisms to associative algebras. The image of Bethe subspaces under each of them generates a commutative subalgebra. We list here $4$ examples coming from Quantum Integrable Systems and Enumerative Geometry.

\subsubsection{Commutative subalgebras in Yangians} Let $\fg$ be the semisimple Lie algebra corresponding to the root system $\Phi$.
The Yangian $Y(\fg)$ for a complex simple Lie algebra is well-known deformation (as a Hopf algebra) of $U(\fg[t])$, the universal enveloping algebra of the current Lie algebra of $\fg$. In one of its presentations by generators and relations it is generated by elements $x, J(x)$ where $x \in \fg$ and $\fg,J(
\fg)$ generates two copies of adjoint representation inside $Y(\fg)$ with certain relations deformed those for $\fg, t \cdot \fg \subset U(\fg[t])$, see \cite{drin,wend}.
There is a homomorphism of Lie algebras 
$$\psi: t_{\Phi}^{trig} \to Y(\fg)$$ such that 
$$t_{\alpha} \mapsto  x_{\alpha}^+ x_{\alpha}^- + x_{\alpha}^- x_{\alpha}^+,  \alpha \in \Phi^+$$ 
$$\delta(h) \mapsto -2J(h).$$
Here $x_{\alpha}^\pm \in \fg$ are the Chevalley generators of $\fg$.
Moreover, $\psi$ is injective on the space of generators. The image of  $ Q(C) 
\subset t_{\Phi}^{trig}$ under $\psi$ is the quadratic part of a \emph{Bethe subalgebra} in the Yangian $Y(\fg)$, see \cite{ilin,ir2}. 
It shows that the question about compactification of Bethe Hamiltonians is closely related with a question about description of all possible limits of Bethe subalgebras in $Y(\fg)$, e.g. Bethe Hamiltonians is so-called quadratic part of a Bethe subalgebra in Yangian.

This example is a main motivation for us to study the trigonometric Gaudin Hamiltonians. In the forthcoming paper we will prove that the parameter space for Bethe subalgebras in $Y(\fg)$ coincides with $X_{\Phi}$ and will study the spectra of Bethe subalgebras in irreducible representations. This is closely related to the question about completeness of Bethe ansatz for the (generalized) XXX Heisenberg spin chain.

In type A, it is known from \cite{ir} that the compactification of the set of Bethe subalgebras in $Y(\fsl_n)$ is isomorphic to Deligne-Mumford compactification $\overline{M}_{0,n+2}$ and this agrees with results of the present paper since $\overline{M}_{0,n+2}=X_{A_n}$. 

In simply-laced types, the Yangian $Y(\fg)$ acts on equivariant cohomology of the corresponding quiver varieties with the operators from $Q(C)$ acting by quantum multiplication by $2$-dimensional classes \cite{mo}.

\subsubsection{Trigonometric Gaudin algebras} 
Let $\fg$ be any complex semisimple Lie algebra, $\fg = \fn_+ \oplus \fh \oplus \fn_-$ be its Cartan decomposition, $\{x_k \}_{k=1}^{\rk \fg}$ be an orthonormal basis with respect to the Killing form of $\fg$, $\Omega = \sum_k x_k 
\cdot x_k \in U(\fg)$ be the Casimir element, $$\Omega^{(ij)} = \sum_{k} 1 \otimes \ldots \otimes x_k \otimes \ldots  \otimes x_k \otimes \ldots \otimes 1  \in U(\fg)^{\otimes n}$$ where we put $x_k$ to $i$-th and $j$-th component.  Write $ \Omega = \Omega_+ + \Omega_0 + \Omega_-$ where $ \Omega_+ \in \fn_+ \otimes \fn_-$, $\Omega_0 \in \fh \otimes \fh$ and $ \Omega_- \in \fn_- \otimes \fn_+$.  
For $ \Phi = A_n$ the trigonometric Lie algebra $\ft^{trig}_{A_n}$ has homomorphisms to $U(\fg)^{\otimes n}$ for any complex simple Lie algebra.  It given by the following formula:

$$t_{ij} \mapsto \Omega^{(ij)}$$
$$\tau(\omega_k) \mapsto -\sum_{l=1}^k\sum_{c=1}^{l-1} \Omega^{(cl)} + \sum_{i=1}^k \sum_{j=1}^n \Omega^{(ij)}_-$$

Moreover, under this map Bethe Hamiltonians maps to (the linear combinations of) {\em trigonometric Gaudin Hamiltonians}, see sections \ref{rootA} and \ref{trig}.

\subsubsection{Commutative subalgebras in graded (degenerate) affine Hecke algebras} In  \cite{bmo10}, 
Braverman, Maulik and Okounkov describe how quantum multiplication by divisor classes 
on the Springer resolution $T^*G^\vee/B^\vee$ for the Langlands dual Lie group, can be represented inside the \emph{graded (degenerate) affine Hecke algebra} 
$H_\Phi$ associated with the root system $\Phi$. The second cohomology space of $T^*G^\vee/B^\vee$ is naturally identified with $\fh$. Let $D_h$ be the divisor class corresponding to $h\in\fh$. Then the operator of quantum multiplication by $D_h$ reads
\[
Q_h(q) = x_h + t \sum_{\alpha \in \Phi^+} 
\alpha(h)\, \frac{q^{\alpha}}{1 - q^{\alpha}}(s_\alpha - 1),
\]
where $x_h$ is the operator of (classical) multiplication by $D_h$, $s_\alpha$ are Weyl reflections, 
and $q^\alpha$ are K\"ahler parameters.  
The subspace spanned by these $Q_h(q)$ is commutative, 
describing the full family of quantum multiplication by divisors.

This family of commuting operators also comes from Bethe subalgebras in $\ft_\Phi^{trig}$ (with $C$ such that $q^\alpha=e^\alpha(C)$) via the homomorphism $\ft_\Phi^{trig}\to H_\Phi$ sending $t_\alpha$ to $s_\alpha-1$ and $\tau(h)$ to $-t^{-1}x_h$.

\subsubsection{Quantum cohomology of hypertoric varieties.}
In \cite{jp} Peters defines a representation of $\ft^{trig}_{\Phi}$ in the quantum cohomolgies of $X_{\Phi}$ such that $\tau(h)$ corresponds to the operators of classical multiplication by 2-dimesnional cohomology classes and $BH(C,h)$ corresponds to operators of quantum multiplication by 2-dimesnional cohomology classes.

\subsection{Some further questions and conjectures} The above examples suggest that the wonderful compactification $X_\Phi$ is not merely a convenient closure for trigonometric Bethe subspaces in the holonomy Lie algebra $t^{\mathrm{trig}}_\Phi$, but rather the intrinsic parameter space governing degenerations of Bethe-type commutative subalgebras across a broad range of representation-theoretic settings, thus leading to conjectures on their spectra regarded as coverings of the De Concini - Gaiffi compactification. Here we consider just one of such circles of conjectures, related to Bethe subalgebras in Yangians. 

\subsubsection{Extension to Bethe subalgebras in the Yangian}  Motivated by the embedding of $t^{\mathrm{trig}}_\Phi$ into the Yangian $Y(\mathfrak g)$ recalled in \S~1.7.1, we expect that the natural compactification of the parameter space of Bethe subalgebras $B(C)\subset Y(\fg)$ defined in \cite{ir2}, for $C\in T^{reg}$ is isomorphic to $X_\Phi$. More precisely, we conjecture that distinct points of $X_\Phi$ correspond to distinct limit commutative subalgebras in the Yangian $Y(\fg)$.

\subsubsection{Degeneration to inhomogeneous Gaudin and an additive limit of $X_\Phi$.}
It is well-known that $Y(\fg)$ is a flat deformation of $U(\fg[t])$, i.e. there is a family of algberas $Y_h(\fg) $ such that $Y_h(\fg) \simeq Y(
\fg), h \ne 0$ and $Y_0(\fg) = U(\fg[t])$. This allows us to degenerate commutative subalgebras in $Y(\fg)$ to those in $U(\fg[t])$.
It is shown in \cite{kmr} that Bethe subalgebras in Yangian degenerate to \emph{universal inhomogeneous Gaudin subalgebras} $\mathcal A^u_\chi\subset U(\fg[t])$ obtained from the Feigin--Frenkel center of $U(\widehat{\fg})$ at the critical level via quantum Hamiltonian reduction. These subalgebras depend on the parameter $\chi\in\fh$ and are maximal commutative for regular $\chi$. In \cite{kmr}, it is proved that  
\[
\lim_{h\to 0} B_h\bigl(\exp(h\chi)\bigr)=\mathcal A^u_\chi.
\]

We expect that this degeneration can be extended to the level of compactification. Namely, we expect certain \emph{additive degeneration} of the wonderful compactification $X_\Phi$ arising as the compactified parameter space of $A^u_\chi$ arising in \cite{kmr}, related to the additive degenerations of toric and wonderful models studied by Balibanu, Crowley and Li. For type $A$ this is done in \cite{ikr}.

\subsubsection{Compact real form of $X_\Phi$ and its fundamental group.} Let
\[
T^{comp}=\Hom_{\mathbb Z}(R,S^1)\subset T
\]
be its compact real form. Denote by $T^{comp}_{reg}$ the complement in $T^{comp}$ of the real
root hypertori $\{e^\alpha=1\}$, $\alpha\in\Phi$. We define the \emph{compact real form} of the
wonderful compactification by
\[
X_\Phi^{comp}:=\overline{\,T^{comp}_{reg}\,}\subset X_\Phi,
\]
where the closure is taken in the complex analytic topology.
By construction, $X_\Phi^{comp}$ is a real semialgebraic subset of $X_\Phi$, stable under the
action of the Weyl group $W$, and its stratification is induced from the boundary stratification
of $X_\Phi$ by root subsystems.

The $W$--action on $X_\Phi^{comp}$ gives rise to the $W$--equivariant fundamental group \(
\pi_1^{W}\!\bigl(X_\Phi^{comp}\bigr),
\) which fits into a natural extension
\[
1 \longrightarrow \pi_1\!\bigl(X_\Phi^{comp}\bigr)
\longrightarrow \pi_1^{W}\!\bigl(X_\Phi^{comp}\bigr)
\longrightarrow W
\longrightarrow 1.
\]
Generalizing \cite{iklpr}, we expect that this group contains the \emph{affine $W$-cactus group} $AC_W$ as a subgroup of finite index. More precisely, $AC_W$ is associated with the affine root system $\Phi_{aff}$ and is generated by involutions
\[
\tau_I,\qquad I\subset \Delta_{aff},
\]
indexed by connected subsets of $\Delta_{aff}$, the (affine) Dynkin diagram of $\Phi_{aff}$, subject to the relations:
\begin{enumerate}
\item $\tau_I^2=1$ for all $I$;
\item $\tau_I\tau_J=\tau_J\tau_I$ whenever the corresponding affine root subsystems are orthogonal;
\item $\tau_I\tau_J=\tau_J\tau_{w_J(I)}$ whenever $I\subset J$, where $w_J$ is the longest element
of the affine Weyl group of the subsystem generated by $J$.
\end{enumerate}
The natural projection $AC_W\to W$ sends $\tau_I$ to $w_I$, and its kernel
\[
PAC_W := \pi_1\!\bigl(X_\Phi^{comp}\bigr)
\]
is the \emph{pure affine cactus group}. This presentation generalizes the finite
cactus group introduced by Losev to the affine setting, and is expected to coincide with the
affine cactus group $AC_n$ from \cite{iklpr} in type $A_{n-1}$.

\subsubsection{Crystal structure and monodromy conjecture.} Let $V=\bigotimes_i W_{k_i,r_i}(u_i)$ be a tensor product of Kirillov--Reshetikhin $Y(\fg)$-modules satisfying suitable genericity and multiplicity--free assumptions. Results of \cite{kmr} show that, in such cases, Bethe subalgebras $B(C)\subset Y(\fg)$ act cyclically on $V$, and that under the degeneration $C=\exp(\varepsilon\chi)$, $\varepsilon\to 0$, their spectra degenerate to those of the universal inhomogeneous Gaudin subalgebras $\mathcal A^u_\chi\subset U(\fg[t])$. Following \cite{kmr}, we expect that this cyclicity holds over the whole compactified parameter space $X_\Phi$, and thus the joint spectrum of $B(C)$ on $V$ forms a (possibly ramified) covering of  $X_\Phi$. Further, we expect that this covering is unramified over the real locus $X_\Phi^{comp}$ and the fiber of this covering over a generic point can be naturally identified with the affine type Kirillov-Reshetikhin crystal of $V$, generalizing the results of \cite{kmr}. In particular, in the additive degeneration limit $C=\exp(\varepsilon\chi)$, $\varepsilon\to 0$, this crystal structure is expected to specialize to the crystal structure on the spectrum of the universal inhomogeneous Gaudin subalgebra $\mathcal A^u_\chi$ on a tensor product of evaluation representations, see \cite{hkrw,kr}. 

The above covering of $X_\Phi^{comp}$ is equivariant with respect to the Weyl group $W$, so there is a natural action of the equivariant fundamental group $\pi_1^W(X_\Phi^{comp})$ on the fiber. We expect that the restriction of this group action to $AC_W$ is the \emph{inner cactus group action} on the Kirillov-Reshetikhin crystal being the straightforward generalization of the inner cactus group on finite-type Kashiwara crystals, see \cite{hkrw,hlly}.  

\subsection{The paper is organized as follows}
In section~2 we recall definitions of rational and trigonometric holonomy Lie algebras. In section~3 we discuss compactification of the family of Gaudin Hamiltonians and the corresponding parameter space: De Concini - Processi compactification $M_{\Phi}$. In section~4 we recall definition of De Concini - Gaiffi wonderful projective model for a toric arrangement and define its open cover. In section~5 we prove Theorem~A. In section~6 we prove Theorem B and its consequences.

\subsection{Acknowledgements} We thank Giovanni Gaiffi, Joel Kamnitzer and Jeremy Peters for useful discussions. The research of A.I. is supported by MSHE RF GZ project. The work of L.R. was supported by the Fondation Courtois.

\section{Holonomy Lie algebras and Bethe subspaces}
\label{lie_alg}

\subsection{Notation} Throughout the paper $\Phi$ is a reduced, finite root system of the rank $n$, $\Phi^+ \subset \Phi$ is a set of positive roots, $\Delta \subset \Phi^+$ is a set of simple roots, $R=\bz\Delta$ is the root lattice, $\fg$ is the complex semisimple Lie algebra corresponding to $\Phi$, such that $\fh=\Hom_\bz(R,\bc)$ is its Cartan subalgebra. 
We say that a root subsystem $\Phi^{\prime} \subset \Phi$ is a closed root subsystem if the following is true: $\alpha,
\beta \in \Phi^{\prime}$ and $\alpha+\beta \in \Phi$ then $\alpha+\beta \in \Phi^{\prime}$.

\subsection{Holonomy Lie algebra $\mathfrak{t}_\Phi$ and Gaudin Hamiltonians}
\label{rathol}

\begin{defn} (cf. \cite{afv2})
The holonomy Lie algebra $t_{\Phi}$ is the Lie algebra generated by the elements
$t_{\alpha}, \alpha \in \Phi$ such that $t_{-\alpha} = t_{\alpha}$ with the following relations.
For any rank $2$ subsystem $\Psi$ such that $\Psi = V \cap \Phi$ for some $2$-dimensional subspace $V$ and $\alpha \in \Psi$ we have
$$[t_{\alpha}, \sum_{\beta \in \Psi} t_{\beta}] = 0$$

\end{defn}
\begin{rem}
The element $c_\Phi=\sum_{\alpha \in \Phi_+} t_{\alpha}$ belnons to the center of $\ft_{\Phi}$.
\end{rem}
For any $\alpha \in \Phi_+$ denote by $D_{\alpha} \subset \fh$ hyperplane given by equation $\alpha(h) = 0$. Consider a fixed element $\chi \in \fh^{reg} = \fh \setminus \cup_{\alpha \in \Phi_+} D_{\alpha}$. 
Denote by $t_{\Phi}^1$ the subspace of $t_{\Phi}$ generated by generators and consider the following elements of $t_{\Phi}^{1}$ corresponding to any $h \in \fh$:

$$H(h, \chi) = \sum_{\alpha \in \Phi_+} \dfrac{\alpha(h)}{\alpha(\chi)} t_{\alpha}$$
The elements above are called Gaudin Hamiltonians.
From  relations in $t_{\Phi}$, it follows that $[H(h_1), H(h_2)] =0$ for any $h_1, h_2 \in \fh$, see e.g. \cite{k}. Denote by $G(\chi)$ the subspace spanned by all $H(\chi, h), h \in \fh$. 
\begin{lem} 
\begin{itemize}
\item For any $\chi\in\fh^{reg}$, we have $\dim G(\chi)=n$;
\item If $\Phi$ is an irreducible root system then for different $\chi \in \bp(\fh^{reg})$ the subspaces $G(\chi)$ are different.
\end{itemize}
\end{lem}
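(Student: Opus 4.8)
The plan is to treat the two bullets separately: the first is a dimension count resting on the linear independence of the generators in degree one, and the second is an injectivity statement for the map $\chi\mapsto G(\chi)$ that ultimately reduces to a rigidity property of irreducible root systems.

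For the first statement, I would begin by noting that $t_\Phi$ is graded with all defining relations homogeneous of degree $2$ (they are commutators), so the degree-one component $t_\Phi^1$ is freely spanned by $\{t_\alpha:\alpha\in\Phi^+\}$; in particular these elements are linearly independent. Since $h\mapsto H(h,\chi)$ is linear from $\fh$ to $t_\Phi^1$, it suffices to check injectivity. If $H(h,\chi)=0$, linear independence of the $t_\alpha$ forces $\alpha(h)/\alpha(\chi)=0$ for every $\alpha\in\Phi^+$; regularity of $\chi$ gives $\alpha(h)=0$ for all positive roots, and as the roots span $\fh^*$ we conclude $h=0$. Hence the map is injective and $\dim G(\chi)=\dim\fh=n$.

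For the second statement I would first observe that $G(\lambda\chi)=G(\chi)$, so $G$ descends to $\bp(\fh^{reg})$ and the claim is that this descended map is injective. Identifying $t_\Phi^1\cong\bc^{\Phi^+}$ via the basis $\{t_\alpha\}$, let $U=\{(\alpha(h))_{\alpha}:h\in\fh\}$ be the image of the injective evaluation map $\fh\to\bc^{\Phi^+}$; then $G(\chi)=D_\chi U$ with $D_\chi=\diag\bigl(\alpha(\chi)^{-1}\bigr)_{\alpha\in\Phi^+}$. Suppose $G(\chi)=G(\chi')$ for regular $\chi,\chi'$. Applying $D_{\chi'}^{-1}$ shows that the diagonal operator $E=\diag(e_\alpha)$ with $e_\alpha=\alpha(\chi')/\alpha(\chi)$ preserves $U$. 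Because the evaluation map is injective, $E$ preserving $U$ is equivalent to the existence of a linear operator $\sigma$ on $\fh^*$ for which every root is an eigenvector, $\sigma\alpha=e_\alpha\alpha$. Thus the whole problem reduces to proving that such an operator must be scalar, i.e.\ that all the $e_\alpha$ coincide.

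The key step, and the one where irreducibility is indispensable, is the following: whenever $\alpha,\beta,\alpha+\beta$ are all roots, the two evaluations $e_{\alpha+\beta}(\alpha+\beta)=\sigma(\alpha+\beta)=\sigma\alpha+\sigma\beta=e_\alpha\alpha+e_\beta\beta$ give $(e_{\alpha+\beta}-e_\alpha)\alpha+(e_{\alpha+\beta}-e_\beta)\beta=0$; since $\alpha,\beta$ are necessarily linearly independent (otherwise $\alpha+\beta$ would be $0$ or $2\alpha$, neither a root in the reduced system $\Phi$), we conclude $e_\alpha=e_\beta=e_{\alpha+\beta}$. I would then propagate this equality along the standard expression of an arbitrary positive root as a sum of simple roots whose partial sums are all roots, and invoke connectedness of the Dynkin diagram of the irreducible $\Phi$, to deduce that all $e_\alpha$ equal a common value $\lambda$. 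This forces $\alpha(\chi'-\lambda\chi)=0$ for every root, hence $\chi'=\lambda\chi$, so the two points of $\bp(\fh^{reg})$ agree. I expect this propagation to be the main obstacle, and it is precisely where the irreducibility hypothesis cannot be dropped: for a reducible system one may rescale each irreducible component independently, producing non-proportional $\chi,\chi'$ with $G(\chi)=G(\chi')$.
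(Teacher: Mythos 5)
Your proof is correct and follows essentially the same route as the paper's: the dimension count rests on the linear independence of the degree-one generators $t_\alpha$ (the paper projects onto the simple-root coordinates, you use the full basis), and injectivity on $\bp(\fh^{reg})$ comes from recovering the ratios $\alpha(\chi')/\alpha(\chi)$ from the subspace and propagating their equality through the connected Dynkin diagram. Your diagonal-operator/eigenvector reformulation is a clean way of filling in the propagation step that the paper dispatches with the single phrase ``since the root system is irreducible.''
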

\begin{proof}
Let $\alpha_i$ be a set of simple roots and $h_i \in \fh$ such that $\alpha_i(h_j) = \delta_{ij}$. Then
the projection of the linear span of $H(h_i, \chi), i=1,\ldots,\rk\Phi$ to the linear span of $\left<t_{\alpha_i}\right> \subset \ft_{\Phi}$ has the dimension $\rk \fg$ because the projection of $H(h_i,\chi)$ is just $t_{\alpha_i}$. Then $\dim G(\chi) = \rk \fg$.

For the second statement consider the preimage of $t_{\alpha_i}$ under projection.  It has form $$t_{\alpha_i} + \sum_{\beta \in \Phi, (\beta,\omega_i)\ne0}  \dfrac{\beta(h_i) \alpha_i(\chi)}{\beta(\chi)}.$$ 
This determines $\chi$ uniquely up proportionality since the root system $\Phi$ is irreducible.
\end{proof}

Consider the map 
$$\iota: \bp(\fh^{reg}) \to Gr(n, \dim t_{\Phi}^1)$$
taking $\spann(\chi)\in \mathbb{P}(\fh)$ to  $G(\chi)\in Gr(n, \dim t_{\Phi}^1)$. The following is the one of the main results of \cite{afv2}.

\begin{thm}
\label{afv12}
The closure of $\iota(\bp(\fh^{reg}))$ in the above Grassmannian is isomorphic to the De Concini-Procesi compactification $M_{\Phi}$ (see Section \ref{dcp-s}).
\end{thm}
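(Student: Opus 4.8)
The plan is to extend $\iota$ to a regular morphism defined on all of $M_\Phi$ and then to show that this extension is a closed immersion, thereby identifying the closure of $\iota(\bp(\fh^{reg}))$ with $M_\Phi$. Recall that $M_\Phi$ is obtained from $\bp(\fh)$ by iterated blowups along the projectivizations of the subspaces $V_\Psi=\bigcap_{\alpha\in\Psi}D_\alpha$ attached to the irreducible root subsystems $\Psi$, processed in order of increasing dimension; its boundary strata are indexed by nested sets (flags) of such subsystems, and near each stratum $M_\Phi$ carries local coordinates adapted to the corresponding nested set. The whole argument is driven by matching the degeneration of the Gaudin subspace $G(\chi)$ against this stratification, and proceeds by induction on the rank of $\Phi$.

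First I would analyze the behaviour of $G(\chi)$ as $\chi$ leaves $\fh^{reg}$. For the simplest degeneration, let $\chi$ approach $V_\Psi$ for a single irreducible $\Psi$, and write $\chi=\chi_0+\varepsilon\xi$ with $\chi_0\in V_\Psi$ and $\xi$ transverse. For $\alpha\in\Psi$ one has $\alpha(\chi)=\varepsilon\,\alpha(\xi)$, so the summands $\frac{\alpha(h)}{\alpha(\chi)}t_\alpha$ with $\alpha\in\Psi$ carry a factor $\varepsilon^{-1}$ and dominate. After rescaling the spanning Hamiltonians by the blow-up parameter $\varepsilon$, I would show that the leading term of $G(\chi)$ is the Gaudin subspace $G_\Psi([\xi])$ of the subsystem $\Psi$ in the transverse directions $\fh/V_\Psi$, while the subleading data records the Gaudin subspace of the restricted arrangement on $V_\Psi$ governed by $\chi_0$. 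This is precisely the product structure $M_\Psi\times M_{\Phi|_{V_\Psi}}$ of the exceptional divisor of the wonderful model, so iterating the rescaling along a full nested set produces a well-defined limit in the Grassmannian at every boundary point. Regularity of the extension $\overline{\iota}:M_\Phi\to Gr(n,\dim t_\Phi^1)$ is then checked in the nested-set coordinates, where after clearing the common denominator the coefficient matrix of a spanning set of Hamiltonians extends holomorphically and retains full rank $n$.

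Next I would prove that $\overline{\iota}$ is injective and unramified. On the open part injectivity is exactly the content of the preceding Lemma for irreducible $\Phi$. On a boundary stratum labelled by a nested set $\mathcal{S}$, the factorization above decomposes the limit subspace into Gaudin data for each subquotient attached to $\mathcal{S}$; by the inductive hypothesis these data separately determine the corresponding points, so distinct points of a stratum yield distinct subspaces, and distinct strata are separated because the set of $t_\alpha$ surviving in the leading term differs. The same local computation shows that the differential of $\overline{\iota}$ is injective. Since $M_\Phi$ is smooth and projective and the Grassmannian is separated, $\overline{\iota}$ is proper, so its image is closed and equals $\overline{\iota(\bp(\fh^{reg}))}$; being proper, injective and unramified, $\overline{\iota}$ is a closed immersion, and hence an isomorphism of $M_\Phi$ onto the smooth closed subvariety $\overline{\iota(\bp(\fh^{reg}))}$, as claimed.

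I expect the main obstacle to be the limit computation of the second paragraph: controlling the correct power of each blow-up parameter so that the Gaudin subspaces genuinely converge in the Grassmannian, and verifying that the recursive subsystem-times-quotient decomposition of the limit matches the stratification of $M_\Phi$ on the nose rather than merely up to reindexing. Once this factorization is established, the injectivity on the boundary and the injectivity of the differential follow by induction on the rank with only routine bookkeeping, and the passage from bijective immersion to isomorphism is then formal.
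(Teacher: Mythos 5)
Your proposal is correct and follows essentially the same route as the paper (and as \cite{afv2}): the $\varepsilon$-rescaling along a nested set is exactly the renormalization $H_i(x)=\sum_{\alpha}\alpha(h_i)w_{\beta_i\alpha}(x)t_\alpha=\beta_i(\chi)H(h_i,\chi)$ used in Proposition~\ref{pr:AFV}, with regularity on the charts $W_S$ supplied by Proposition~\ref{pr:DCP-reg-functions} and full rank read off from the projection onto the span of the $t_{\beta_i}$. Note that the paper itself only reproduces this extension step and cites \cite[Theorem~4.4]{afv2} for the closed-embedding part, which your second paragraph sketches (correctly, by the same separation-of-points-and-tangent-vectors argument as in \cite{afv2}) rather than proves in detail.
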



\subsection{Trigonometric holonomy Lie algebra $\mathfrak{t}_\Phi^{trig}$ and Bethe Hamiltonians}
In the next definition we follow \cite{tl}.
\begin{defn}
The trigonometric holonomy Lie algebra $t_{\Phi}^{trig}$ is the Lie algebra generated by the elements
$t_{\alpha}, \alpha \in \Phi$ and $\tau(h), h \in \fh$ such that $t_{-\alpha} = t_{\alpha}$ with the following relations

1) For any rank $2$ closed root  subsystem $\Psi \subset \Phi$ and $\alpha \in \Psi$ we have
$$[t_{\alpha}, \sum_{\beta \in \Psi} t_{\beta}] = 0;$$

2) $\tau(c_1 h_1+ c_2 h_2) = c_1 \tau(h_1) + c_2 \tau(h_2)$ for any $h_1, h_2 \in \fh$, $c_1,c_2 \in \bc$; \\

3) $[\tau(h_1), \tau(h_2)] = 0 $ for any $h_1, h_2 \in \fh$;
 
4) $[t_{\alpha}, \delta(h)] = 0$ for any $h \in \fh$ such that $\alpha(h) = 0$. Here $\delta(h) = \tau(h) - \dfrac{1}{2} \sum_{\alpha \in \Phi^+} \alpha(h) t_{\alpha}$.

\end{defn}

From definition it follows that $\delta(c_1h_1+ c_2h_2) = c_1\delta(h_1) + c_2\delta(h_2)$ for any $h_1, h_2 \in \fh$ and $c_1,c_2\in \mathbb{C}$.

\begin{prop}\cite{tl}
\label{tl2}
    The algebra $\mathfrak{t}_\Phi^{trig}$ carries the following $W$-action: 
    $$w \cdot t_{\alpha} = t_{w \cdot \alpha}$$
    $$w \cdot \tau(h) = \tau(h) - \sum_{w \in \Phi_+ \cap w \Phi_-} \alpha(h) t_{\alpha}$$
    In particular, we have $w \cdot \delta(h) = \delta(h)$ for any $w \in W$.
\end{prop}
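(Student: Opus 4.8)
The plan is to read the two formulas as an assignment on the generators $t_\alpha$ and $\tau(h)$, and to check two things: first, that for each fixed $w\in W$ this assignment respects the four defining relations, so that it extends to a Lie algebra endomorphism $\rho_w$ of $\ft_\Phi^{trig}$; and second, that $w\mapsto\rho_w$ is a homomorphism, $\rho_w\circ\rho_{w'}=\rho_{ww'}$. Invertibility of each $\rho_w$ then comes for free from $\rho_w\circ\rho_{w^{-1}}=\rho_e=\mathrm{id}$, so that every $\rho_w$ is an automorphism and we obtain a genuine $W$-action. The substance of the argument is concentrated in the relation-preservation step for fixed $w$; the homomorphism property is a combinatorial check on inversion sets.

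Among the relations, (2) is immediate because the correction term is linear in $h$, and (1) holds because $w$ carries a rank-$2$ closed subsystem $\Psi$ bijectively onto another rank-$2$ closed subsystem $w\Psi$, so $[t_{w\alpha},\sum_{\beta\in\Psi}t_{w\beta}]=[t_{w\alpha},\sum_{\gamma\in w\Psi}t_\gamma]=0$ is again an instance of (1). The efficient way to handle (4) is to first establish the transformation law of $\delta$, which is the ``in particular'' assertion: I would prove as a preliminary lemma that the linear map $\delta\colon\fh\to\ft_\Phi^{trig}$ is $W$-equivariant. This is a direct bookkeeping computation: substituting $\tau(h)=\delta(h)+\tfrac12\sum_{\gamma\in\Phi^+}\gamma(h)t_\gamma$ into the definition of $\rho_w(\tau(h))$, splitting $\Phi^+$ according to the sign of $w\gamma$, and using $t_{-\beta}=t_\beta$ together with the identity $w\Phi^+\cap\Phi^-=-(\Phi^+\cap w\Phi^-)$, one sees that the correction terms collapse exactly so that $\rho_w(\delta(h))$ again lies in the image of $\delta$. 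Granting this, relation (4) is preserved almost for free, since $\alpha(h)=0$ is equivalent to the vanishing of the transformed linear form on the transformed $h$, and $\rho_w$ sends $[t_\alpha,\delta(h)]$ to a bracket of the form $[t_{w\alpha},\delta(\cdot)]$, which vanishes by (4) for the root $w\alpha$.

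The main obstacle is relation (3), the commutativity $[\tau(h_1),\tau(h_2)]=0$. Expanding
\[
[\rho_w\tau(h_1),\rho_w\tau(h_2)]
=[\tau(h_1),\tau(h_2)]
+\sum_{\alpha,\beta}\alpha(h_1)\beta(h_2)[t_\alpha,t_\beta]
-\sum_{\beta}\beta(h_2)[\tau(h_1),t_\beta]
+\sum_{\alpha}\alpha(h_1)[\tau(h_2),t_\alpha],
\]
with all sums over the inversion set $\Phi^+\cap w\Phi^-$, the first term vanishes by (3), but the remaining cross-terms involve the brackets $[\tau(h),t_\beta]$ and $[t_\alpha,t_\beta]$ for roots in the inversion set, which do not vanish individually. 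I would control them by passing once more to $\delta$: writing $\tau=\delta+\tfrac12\sum\gamma(\cdot)t_\gamma$ reduces every $[\tau(h),t_\beta]$ to a combination of $[\delta(h),t_\beta]$, governed by (4), and of the structure constants $[t_\gamma,t_\beta]$, so that the entire identity becomes a statement purely about brackets of the $t$'s. I expect this to reduce, along a reduced decomposition $w=s_{i_1}\cdots s_{i_\ell}$, to the rank-$2$ relations (1): the inversion set is the disjoint union $\{\alpha_{i_1},\,s_{i_1}\alpha_{i_2},\dots\}$, and the roots occurring in the cross-terms lie in rank-$2$ closed subsystems spanned by such pairs. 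Making this reduction precise -- verifying that relations (1) and (4) together annihilate all the cross-terms -- is the delicate part of the argument and the step I expect to require the most care.

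Finally I would verify $\rho_w\circ\rho_{w'}=\rho_{ww'}$. On the $t_\alpha$ this is the tautology $t_{w(w'\alpha)}=t_{(ww')\alpha}$, and on $\tau(h)$ it reduces to a cocycle identity for the correction terms, which follows from the standard additivity of inversion sets, $\Phi^+\cap ww'\Phi^-=(\Phi^+\cap w\Phi^-)\sqcup w(\Phi^+\cap w'\Phi^-)$ whenever $\ell(ww')=\ell(w)+\ell(w')$, combined with $\langle w\beta,w\cdot\rangle=\langle\beta,\cdot\rangle$; the general case reduces to this two-term identity together with the single rule $t_{-\beta}=t_\beta$ to absorb the roots that $w$ turns negative. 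Specializing to $w'=w^{-1}$ then gives $\rho_w\rho_{w^{-1}}=\mathrm{id}$, so each $\rho_w$ is an automorphism, and the $\delta$-equivariance established along the way is precisely the asserted ``in particular'' clause.
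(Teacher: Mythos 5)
The paper offers no proof of this proposition: it is imported from Toledano Laredo \cite{tl}, and the displayed formulas contain typos (the sum should run over $\alpha\in\Phi^+\cap w\Phi^-$, and consistency with the claim about $\delta$ forces the equivariant normalization $w\cdot\tau(h)=\tau(wh)-\sum_{\alpha\in\Phi^+\cap w\Phi^-}\alpha(wh)\,t_\alpha$, $w\cdot\delta(h)=\delta(wh)$, which your computation implicitly restores). So there is nothing in the paper to compare against; your overall strategy --- verify that the assignment preserves the four relations for each $w$, then check the inversion-set cocycle identity --- is the standard and correct route. Your treatment of relations (1), (2), (4), of the $\delta$-bookkeeping, and of the composition law is fine.

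The gap is that you leave the one genuinely hard step as an expectation. Granting the reduction to simple reflections, preservation of relation (3) for $w=s_i$ is equivalent, after exactly the cancellations you set up, to the single statement that $[\tau(h),t_{\alpha_i}]=0$ whenever $\alpha_i(h)=0$, i.e.\ (using relation (4) to remove $\delta$) to
\[
\Bigl[\,t_{\alpha},\ \sum_{\beta\in\Phi^+}\beta(h)\,t_{\beta}\Bigr]=0
\qquad\text{whenever }\alpha(h)=0 .
\]
This does follow from relation (1), but not by the naive grouping your sketch suggests: decomposing $\Phi^+\setminus\{\alpha\}$ into the two-planes $V$ through $\alpha$, the coefficient $\beta(h)$ is in general \emph{not} constant on $(\Phi\cap V)^+\setminus\{\alpha\}$, so the relation $[t_\alpha,\sum_{\beta\in\Phi\cap V}t_\beta]=0$ alone does not kill the sum. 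Concretely, in $G_2$ with $\alpha=\alpha_1$ short and $\alpha_1(h)=0$, the long root $3\alpha_1+2\alpha_2$ carries coefficient $2\alpha_2(h)$ while the remaining four positive roots carry $\alpha_2(h)$; the identity closes only because $\{\pm\alpha_1,\pm(3\alpha_1+2\alpha_2)\}$ is itself a closed $A_1\times A_1$ subsystem whose instance of relation (1) gives $[t_{\alpha_1},t_{3\alpha_1+2\alpha_2}]=0$. So the reduction must invoke relation (1) for the \emph{proper} closed rank-$2$ subsystems sitting inside each plane, not only for $\Phi\cap V$ itself. Once you isolate and prove this lemma (it is exactly the identity needed in \cite{tl} for flatness of the trigonometric Casimir connection), the rest of your outline goes through as written.
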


We also note that we there is the obvious homomorphism from $\ft_{\Phi}$ to $\ft^{trig}_{\Phi}$ as well as a homomorphism from $\ft_{\Phi^{\prime}}$ to $\ft^{trig}_{\Phi}$ where $\Phi^{\prime} \subset \Phi$ is a closed root subsystem.


Let $C \in T^{reg}$ and consider the following elements of $t_{\Phi}^{trig}$:
$$BH(C,h) = \delta(h) - \dfrac{1}{2}\sum_{\alpha \in \Phi^+} \dfrac{e^{\alpha}(C)+1}{e^{\alpha}(C) - 1} \alpha(h) t_{\alpha} = \tau(h) - \sum_{\alpha \in \Phi^+} \dfrac{e^{\alpha}(C)\alpha(h)}{e^{\alpha}(C) - 1}  t_{\alpha}$$
From  relations in $t_{\Phi}^{trig}$ it follows, that $[BH(h_1), BH(h_2)] =0$ for any $h_1, h_2 \in \fh$, see \cite[Section 3.8]{tl}.
Denote by $Q(C)$ the linear span of all the $BH_i(C)$.

\begin{lem}

\begin{itemize}
\item $\dim Q(C)=\rk \Phi$;
\item For different $C \in T^{reg}$ the subspaces $Q(C)$ are different.
\end{itemize}
\end{lem}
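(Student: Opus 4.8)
The plan is to exploit the linear structure of the degree-one part of $\ft_\Phi^{trig}$. Writing $(\ft_\Phi^{trig})^1$ for the span of the generators, I would first observe that among the defining relations 1)--4) only relation 2) is linear in the generators; all the others are genuine brackets and so impose no linear relation. Consequently $(\ft_\Phi^{trig})^1$ decomposes as $\fh\oplus\bigoplus_{\alpha\in\Phi^+}\bc\, t_\alpha$, where the first summand is the image of the (injective) map $\tau$ and the $t_\alpha$ are linearly independent, exactly as in $\ft_\Phi$. To certify injectivity of $\tau$ and the directness of this sum I would invoke one of the homomorphisms recalled in the introduction --- for instance $\ft_\Phi^{trig}\to Y(\fg)$, which was noted to be injective on the space of generators, or the homomorphism to the graded affine Hecke algebra $H_\Phi$ --- under which the $\tau(h)$ and the $t_\alpha$ have manifestly independent images.

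For the first bullet, note that $h\mapsto BH(C,h)$ is linear, so $Q(C)$ is the image of a linear map $\fh\to(\ft_\Phi^{trig})^1$, and it suffices to show this map is injective. Projecting $(\ft_\Phi^{trig})^1$ onto its $\fh$-summand along the $t_\alpha$, the element $BH(C,h)=\tau(h)-\sum_{\alpha\in\Phi^+}\frac{e^\alpha(C)\alpha(h)}{e^\alpha(C)-1}t_\alpha$ maps to $\tau(h)$. Hence, letting $h$ run over the basis $h_i$ dual to the simple roots, the $BH(C,h_i)$ project to the linearly independent vectors $\tau(h_i)$, so they are themselves independent and $\dim Q(C)=n=\rk\Phi$. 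This is the analogue of, and slightly simpler than, the projection argument used for the Gaudin subspaces $G(\chi)$, since here the $\tau$-part already separates the $BH(C,h_i)$.

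For the second bullet, I would reconstruct $C$ from $Q(C)$. Since the $\fh$-projection restricts to an isomorphism $Q(C)\xrightarrow{\sim}\fh$, for every $h$ there is a unique element of $Q(C)$ whose $\tau$-component equals $\tau(h)$, and it is precisely $BH(C,h)$. Reading off its $t_\alpha$-coordinate for some $h$ with $\alpha(h)\neq 0$ recovers the scalar $\frac{e^\alpha(C)}{e^\alpha(C)-1}$ for each $\alpha\in\Phi^+$. As $C\in T^{reg}$ we have $e^\alpha(C)\neq 1$, and the M\"obius map $w\mapsto\frac{w}{w-1}$ is a bijection of $\bc\setminus\{1\}$ onto itself, so these scalars determine $e^\alpha(C)$ for every positive root $\alpha$. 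In particular the values $e^{\alpha_i}(C)$ on the simple roots are determined; since the simple roots form a $\bz$-basis of $R$ and $T=\Hom_\bz(R,\bc^\times)$, the point $C$ is uniquely recovered, so $Q(C)=Q(C')$ forces $C=C'$.

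The only genuinely delicate point is the degree-one decomposition $(\ft_\Phi^{trig})^1=\fh\oplus\bigoplus_\alpha\bc\, t_\alpha$ underpinning both projections: once it is in hand, both statements follow by linear algebra and an elementary M\"obius inversion. I would therefore isolate that decomposition as a preliminary observation --- verifying that relations 1), 3), 4) lie in bracket-degree $\ge 2$ and that $\tau$ is injective with image complementary to $\langle t_\alpha\rangle$ --- after which the remainder of the argument is routine.
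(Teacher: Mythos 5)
Your proof is correct and follows essentially the same route as the paper: both bullets are obtained by viewing $Q(C)$ as the graph of a linear map over its projection to $\tau(\fh)$, with the first bullet coming from the fact that $BH(C,h)\mapsto\tau(h)$ under that projection and the second from reading off the $t_\alpha$-coefficients of the unique lifts and inverting $w\mapsto w/(w-1)$. The paper states this very tersely; your version merely makes explicit the degree-one decomposition $(\ft_\Phi^{trig})^1=\fh\oplus\bigoplus_{\alpha\in\Phi^+}\bc\,t_\alpha$ and the recovery of $C$ from the values $e^{\alpha_i}(C)$ on simple roots, both of which are implicit in the paper's argument.
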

\begin{proof}
First statement follows from the fact that dimension of the projection of the span $BH_i(C)$   to span of $\tau(h_i)$ is equal to $\rk \Phi$.
The second follows from the fact that the subspace $Q(C)$ is uniquely determined by its projection to $\spann \langle \tau(h) \rangle$.
\end{proof}

\subsection{The case of the root system of type $A_n$}
\label{rootA}

Recall that in type $A_n$ a set of positive roots can be chosen as $\{ \varepsilon_i - \varepsilon_j \, | \, i \ne j, i,j = 1, \ldots, n+1\}$. We denote by $t_{ij}$ the element $t_{ \varepsilon_i - \varepsilon_j } \in \ft_{A_n}$. 

\begin{prop}
\label{a_n_case}
\begin{enumerate}
    \item There is a homomorphism $\psi$ from trigonometric holonomy Lie algebra $\mathfrak{t}_{A_n}^{trig}$ to $\mathfrak{t}_{A_{n+1}}$ which is an injection on the space of generators.
    \item Let $\pi: \mathfrak{t}_{A_{n+1}} \to \mathfrak{t}_{A_{n+1}}/ \left<\sum_{i,j=0}^n t_{ij} \right>$ be the projection onto the quotient by the central element $\sum_{i,j=0}^n t_{ij}$. Then $\pi \circ \psi: \mathfrak{t}_{A_n}^{trig} \to \mathfrak{t}_{A_{n+1}}/ \left<\sum_{i,j=0}^n t_{ij} \right>$ is a surjective homomorphism. 
    \item Let $C = \diag(z_1, \ldots, z_n)$. Then $\psi(Q(C)) = G(\chi)$ with $\chi=\diag(0,z_1,\ldots,z_n)$.
\end{enumerate}
\end{prop}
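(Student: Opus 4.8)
The plan is to construct $\psi$ by hand from the geometric picture that identifies the trigonometric $A_n$ data on the torus $T$ with rational $A_{n+1}$ data on a configuration of $n+2$ points one of which is pinned at the origin. I index the coordinates of $A_n$ by $\{1,\dots,n+1\}$ and those of $A_{n+1}$ by $\{0,1,\dots,n+1\}$, and set $\psi(t_{ij})=t_{ij}$ for $1\le i<j\le n+1$ together with
\[
\psi(\tau(h)) = -\sum_{k=1}^{n+1} h_k\,t_{0k} \;-\; \sum_{1\le i<j\le n+1} h_j\,t_{ij},
\]
for a representative $h=(h_1,\dots,h_{n+1})$ of $h\in\fh$. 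For the dual basis $h_a$ (with $\varepsilon_k(h_a)=\delta_{ka}$) this collapses to the clean formula $\psi(\tau(h_a))=-\sum_{i=0}^{a-1}t_{ia}$, the negative sum of the generators joining $a$ to all smaller indices (including $0$). Since $\sum_a\psi(\tau(h_a))=-c$, where $c=\sum_{0\le i<j\le n+1}t_{ij}$ is the central element of $\ft_{A_{n+1}}$, this assignment is canonical only modulo $c$; I fix a linear section of $\fh$ (e.g. $\sum_k h_k=0$) so that $\psi$ is literally a map to $\ft_{A_{n+1}}$, noting that all bracket computations below are insensitive to this choice because $c$ is central.

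For part (1) I verify the four defining relations on the images. Linearity (relation 2) is built into the formula, and the rank-$2$ relations (relation 1) transport for free: a closed rank-$2$ subsystem $\Psi\subset\Phi_{A_n}$ spans a plane $V$ with no $\varepsilon_0$-component, so $V\cap\Phi_{A_{n+1}}=\Psi$ and the relation $[t_\alpha,\sum_{\beta\in\Psi}t_\beta]=0$ is already present in $\ft_{A_{n+1}}$. The two nontrivial relations reduce to $A_2$-triples. For relation 3 I compute
\[
[\psi(\tau(h_a)),\psi(\tau(h_b))]=\sum_{i<a}\bigl([t_{ia},t_{ib}]+[t_{ia},t_{ab}]\bigr)\qquad(a<b),
\]
and each summand vanishes by the $A_2$-relation $[t_{ia},t_{ib}+t_{ab}]=0$ on the triple $\{i,a,b\}$. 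For relation 4 the same triples (now including $\{0,a,b\}$) give, for $\alpha=\varepsilon_a-\varepsilon_b$,
\[
[t_{ab},\psi(\delta(h))]=(h_b-h_a)\Bigl(v+\tfrac12\sum_{c\neq a,b}u_c\Bigr),\qquad u_c=[t_{ab},t_{ac}],\ v=[t_{ab},t_{0a}],
\]
which vanishes exactly when $\alpha(h)=h_a-h_b=0$; this is where the symmetric combination $\delta(h)$ (rather than $\tau(h)$) is essential. Injectivity on generators is then immediate, since the $t_{ij}$-images are independent and each $\psi(\tau(h_a))$ carries the distinguished term $-t_{0a}$.

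For part (2) I show every generator $t_{ij}$ of $\ft_{A_{n+1}}$ lies in the image of $\pi\circ\psi$. The generators $t_{ij}$ with $i,j\ge1$ are hit directly, hence $\sum_{1\le i<j}h_j t_{ij}$ is in the image; subtracting it from $\psi(\tau(h))$ produces $-\sum_k h_k t_{0k}$, and as $h$ varies these span a hyperplane in $\spann\{t_{0k}\}$. The one missing direction $\sum_k t_{0k}$ is recovered modulo $c$ from $\sum_k t_{0k}=c-\sum_{1\le i<j}t_{ij}\equiv-\sum_{1\le i<j}t_{ij}$, whose right-hand side is already in the image. Hence all $t_{0k}$, and therefore all generators, lie in the image of $\pi\circ\psi$, proving surjectivity.

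For part (3) I match Bethe Hamiltonians with Gaudin Hamiltonians by a direct coefficient computation. With $C=\diag(z_1,\dots,z_{n+1})$ one has $e^{\varepsilon_i-\varepsilon_j}(C)=z_i/z_j$, and I claim $\psi(BH(C,h))=H(\tilde h,\chi)$ for $\chi=\diag(0,z_1,\dots,z_{n+1})$ and the $z$-dependent dictionary $\tilde h_0=0$, $\tilde h_k=-z_k h_k$. Checking this on the basis $h_a$ reduces, after using $\tfrac{z_i/z_j}{z_i/z_j-1}=\tfrac{z_i}{z_i-z_j}$ and the identity $-1-\tfrac{z_i}{z_a-z_i}=\tfrac{z_a}{z_i-z_a}$, to an equality of the coefficient of each $t_{ij}$, which I expect to be a short verification. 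Because the linear map $h\mapsto\tilde h$ together with $\chi$ itself spans the full Gaudin parameter space, and because the central element appears as the Gaudin Hamiltonian $c=H(\chi,\chi)$, the images $\{\psi(BH(C,h))\}$ span $G(\chi)$ modulo $c$; this is the precise sense in which $\psi(Q(C))=G(\chi)$, the two sides agreeing in $\ft_{A_{n+1}}/\langle c\rangle$ where both are $n$-dimensional. The main obstacle throughout is not any single computation but (i) guessing the correct formula for $\psi(\tau)$, after which relations 3--4 fall out of the $A_2$-relations, and (ii) keeping careful track of the central element $c$, which simultaneously governs the well-definedness of $\psi$, the surjectivity in part (2), and the dimension count in part (3).
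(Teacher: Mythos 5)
Your proof is correct and follows essentially the same route as the paper's: your assignment $\psi(\tau(h_a))=-\sum_{i=0}^{a-1}t_{ia}$ is exactly the paper's $\tau(\omega_k)\mapsto-\sum_{l=1}^{k}\sum_{c=0}^{l-1}t_{cl}$ rewritten in the dual basis (negative Jucys--Murphy elements), the relations are checked the same way via $A_2$ triples, and part (3) is the same coefficient matching. That said, you are more careful than the paper on three points worth keeping. First, you verify relation (4) for arbitrary $h$ with $\alpha(h)=0$, whereas the paper only records that $\psi(\delta(\omega_k))$ commutes with $t_{ij}$ for $i,j$ on the same side of $k$; since for a non-simple root $\alpha$ the coweights $\omega_k$ with $\alpha(\omega_k)=0$ do not span the hyperplane $\alpha(h)=0$, your computation is the one that actually closes that step. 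Second, you flag the central ambiguity in defining $\psi(\tau(h))$ (the naive formula is incompatible with $\sum_a h_a=0$ in $\fh$), which the paper sidesteps by defining $\psi$ only on the basis $\{\omega_k\}$; either fix is fine and they differ by central shifts that affect nothing. Third, your observation that $\psi(Q(C))=G(\chi)$ only modulo the central element $c=H(\chi,\chi)$ is a genuine refinement rather than a weakening: the dimension count $\dim Q(C)=\rk A_n$ versus $\dim G(\chi)=\rk A_{n+1}$ shows the literal equality cannot hold, so part (3) must be read in $\ft_{A_{n+1}}/\langle c\rangle$, consistently with part (2). Relatedly, the paper's displayed claim that $BH(\omega_1,C)+\cdots+BH(\omega_k,C)$ maps to $-z_kH_k$ should be the difference $BH(\omega_k,C)-BH(\omega_{k-1},C)=BH(h_k,C)$; your dictionary $\tilde h_k=-z_kh_k$ encodes the corrected version of that computation.
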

\begin{proof}
The map is given by the formula
$$\psi: \mathfrak{t}^{trig}_{A_n}  \to \mathfrak{t}_{n+1}$$

$$t_{ij} \mapsto t_{ij}, 1 \leq i,j \leq n$$
$$\tau(\omega_k) \mapsto -\sum_{l=1}^k\sum_{c=0}^{l-1} t_{cl}$$ 

Here $\omega_k$ are elements of $\fh$ corresponding to the fundamental weights.
Since the images of $\tau(\omega_k)$ are linear combinations of the Jucys-Murphy elements, they pairwise commute.
We have 
$$\delta(\omega_k) \mapsto  -\sum_{l=1}^k\sum_{c=0}^{l-1} t_{cl} - \frac{1}{2} \sum_{i \in \{ 1, \ldots, k\}, j \in \{ k+1, \ldots, n \}} t_{ij}$$

It commutes with $t_{ij}$ for $i,j \in \{1, \ldots, k \}$ or $i,j \in \{k+1, \ldots, n\}$. The map is clearly injection on the set of generators and composition with $\pi$ is clearly surjective.

To prove the last claim we observe that
$$BH(\omega_k,C) \mapsto  -\sum_{l=1}^k\sum_{c=0}^{l-1} t_{cl} - \sum_{i \in\{1, \ldots, k\}, j \in \{ k+1, \ldots, n \} } \frac{z_i}{z_i-z_j} t_{ij}$$

Then $BH(\omega_1,C) + \ldots + BH(\omega_k,C) \mapsto -z_k H_k(0,z_1, \ldots, z_n)$, where 
$$H_k(0,z_1, \ldots, z_n) = \frac{t_{0k}}{z_k} + \sum_{i = 1, i \ne k}^n \dfrac{t_{ki}}{z_k-z_i} $$
and the claim follows because $H_1, \ldots H_n$ span $G(0,C)$.

\end{proof}

\subsection{Gaudin Hamiltonians in $U(\fg)^{\otimes n}$.}
\label{trig}
The \emph{rational Gaudin Hamiltonians} \cite{g1,g2} are the following commuting elements of $U(\fg)^{\otimes n}$ depending on pairwise different complex numbers $z_1,\ldots,z_n$:
\[
H_i:=\sum\limits_{j\ne i}\dfrac{\Omega_{ij}}{z_i-z_j}\quad i=1,\ldots,n.
\]
They are the images of Gaudin Hamiltonians in $\ft_{n}=\ft_{A_{n-1}}$ under the homomorphism $\phi:\mathfrak{t}_{A_{n-1}} \to U(\fg)^{\otimes n}$ that takes $t_{ij} \mapsto \Omega^{(ij)}$ for $1\le i,j\le n$. The only linear dependence on them is still $\sum\limits_{i=1}^n H_i=0$, so their linear span is $(n-1)$-dimensional.

In \cite{ikr}, we consider the set of commuting \emph{trigonometric Gaudin Hamiltonians} in $U(\fg)^{\otimes n}$ attached to any $\theta\in\fh$:
$$ H_{i, \theta}^{trig} = \frac{\theta^{(i)}}{z_i} + \sum_{j\ne i } \frac{\Omega^{(ij)}}{z_i - z_j} - \sum_j \frac{\Omega_-^{(ij)}}{z_i}$$
Here $\theta^{(i)}$ denotes $\theta$ inserted to the $i$-th tensor factor.
More specifically, we have rational Gaudin Hamiltonians $H_1,\ldots,H_n$ in $U(\fg)^{\otimes (n+1)}$, the images of Gaudin Hamiltonians in $\ft_{n+1}=\ft_{A_n}$ under the homomorphism $\phi:\mathfrak{t}_{A_{n}} \to U(\fg)^{\otimes (n+1)}$ that takes $t_{ij} \mapsto \Omega^{(ij)}$ for $0\le i,j\le n$. Next, we have a homomorphism 
$i_{\theta}: (U(\fg)^{\otimes n+1})^{\fg} \to U(\fg)^{\otimes n}$ (see \cite[Section 5.1]{ikr} for the definition) such that $H_{i, \theta}^{trig}=i_{\theta}(H_i)$ for $i=1,\ldots,n$. 

According to \cite{ikr}, the element $\Omega_{0i}, i = 1, \ldots, n$ goes to $\theta^{(i)} - \sum_{j=1}^n \Omega_-^{(ij)}$ under the map $i_{\theta}$.
Composing it with the homomorphisms $\phi:\mathfrak{t}_{A_{n}} \to U(\fg)^{\otimes (n+1)}$ and $\psi: \mathfrak{t}_{A_n}^{trig} \to \mathfrak{t}_{A_{n+1}}$, we obtain the map
$$ i_{\theta} \circ \phi \circ \psi: \mathfrak{t}_{A_n}^{trig} \to U(\fg)^{\otimes n}$$
$$t_{ij} \mapsto \Omega^{(ij)}$$
$$\tau(\omega_k) \mapsto -\sum_{l=1}^k\sum_{c=1}^{l-1} \Omega^{(cl)} + \sum_{i=1}^k \sum_{j=1}^n \Omega^{(ij)}_- - \sum_{m=1}^k \theta^{(m)}$$
So we get the following 

\begin{prop} Let $C=(z_1,\ldots z_n)$ with $z_i$ being pairwise different complex numbers. Then the homomorphism $i_{\theta} \circ \phi \circ \psi$ takes $B(\omega_1,C) + \ldots + B(\omega_k,C)$ to $-z_k H_{k, \theta}^{trig}$.
\end{prop}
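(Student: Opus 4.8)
The plan is to establish the identity by factoring the map $i_{\theta}\circ\phi\circ\psi$ and propagating the element $B(\omega_1,C)+\ldots+B(\omega_k,C)$ through the three maps one at a time, invoking at each stage a fact already recorded above; essentially no computation beyond index bookkeeping is required.

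First I would apply $\psi$. The final display in the proof of Proposition~\ref{a_n_case} already shows that
\[
\psi\Bigl(B(\omega_1,C)+\ldots+B(\omega_k,C)\Bigr)=-z_k\,H_k(0,z_1,\ldots,z_n),
\qquad
H_k(0,z_1,\ldots,z_n)=\frac{t_{0k}}{z_k}+\sum_{i=1,\,i\ne k}^{n}\frac{t_{ki}}{z_k-z_i},
\]
the Gaudin Hamiltonian in $\ft_{n+1}$ attached to the marked points $0,z_1,\ldots,z_n$. Next I would apply $\phi\colon\ft_{A_n}\to U(\fg)^{\otimes(n+1)}$, $t_{ij}\mapsto\Omega^{(ij)}$. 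Term by term this turns $H_k(0,z_1,\ldots,z_n)$ into the rational Gaudin Hamiltonian
\[
H_k=\frac{\Omega^{(0k)}}{z_k}+\sum_{i=1,\,i\ne k}^{n}\frac{\Omega^{(ki)}}{z_k-z_i}=\sum_{j\ne k}\frac{\Omega^{(kj)}}{z_k-z_j}\in U(\fg)^{\otimes(n+1)},
\]
where in the last sum $j$ runs over $0,1,\ldots,n$ with the extra node carrying parameter $z_0=0$.

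The third step is to apply the reduction map $i_{\theta}$. Before doing so I would record that each $\Omega^{(kj)}$ commutes with the diagonal $\fg$-action, so $H_k\in\bigl(U(\fg)^{\otimes(n+1)}\bigr)^{\fg}$ and $i_{\theta}$ is legitimately applicable. Using the property $i_{\theta}(\Omega^{(0k)})=\theta^{(k)}-\sum_{j}\Omega_-^{(kj)}$ recalled from \cite{ikr} together with the fact that $i_{\theta}$ fixes $\Omega^{(ki)}$ for $i\ge 1$, a one-line substitution gives
\[
i_{\theta}(H_k)=\frac{\theta^{(k)}}{z_k}+\sum_{j\ne k}\frac{\Omega^{(kj)}}{z_k-z_j}-\sum_{j}\frac{\Omega_-^{(kj)}}{z_k}=H_{k,\theta}^{trig},
\]
where now all sums run over $1,\ldots,n$; multiplying through by $-z_k$ yields the asserted equality $i_{\theta}\circ\phi\circ\psi\bigl(\sum_{m=1}^k B(\omega_m,C)\bigr)=-z_k\,H_{k,\theta}^{trig}$.

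The only point requiring genuine care is the matching of conventions rather than any real difficulty: one must check that the distinguished node $0$ of the Gaudin system $H_k(0,z_1,\ldots,z_n)$ is precisely the node along which $i_{\theta}$ performs the Hamiltonian reduction, and that the normalization $\chi=\diag(0,z_1,\ldots,z_n)$ from Proposition~\ref{a_n_case}(3) is the one producing the parameters appearing in $H_{k,\theta}^{trig}$. As an independent cross-check I would run $\tau(\omega_k)$ through the same factorization and confirm that it reproduces the explicit formula for $i_{\theta}\circ\phi\circ\psi(\tau(\omega_k))$ displayed just before the proposition; agreement of the two routes rules out any stray sign or indexing error.
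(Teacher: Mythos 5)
Your proposal is correct and follows the same route the paper takes: the paper's own (implicit) proof is exactly the assembly of Proposition~\ref{a_n_case}(3), which gives $\psi\bigl(\sum_{m\le k}B(\omega_m,C)\bigr)=-z_k H_k(0,z_1,\ldots,z_n)$, with the definitions $\phi(t_{ij})=\Omega^{(ij)}$ and the recorded identity $H_{k,\theta}^{trig}=i_\theta(H_k)$ from \cite{ikr}. Your explicit rederivation of $i_\theta(H_k)$ from the formula $i_\theta(\Omega^{(0k)})=\theta^{(k)}-\sum_j\Omega_-^{(kj)}$, and the check that $H_k$ is $\fg$-invariant so that $i_\theta$ applies, only make the argument more self-contained.
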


\subsection{Parameter space and compactification problem.}
The subspace $Q(C)$ generated by Bethe Hamiltonians define the map 
$$\psi: T^{reg} \to Gr(n, \dim (t_{\Phi}^{trig})^{1}).$$
The main purpose of present paper is to describe the closure of $T^{reg}$ in Grassmannian. One can think about this as multiplicative analog of the same problem for quadratic Gaudin Hamiltonians in the holonomy Lie algebra $\mathfrak{t}_{\Phi}$, see \cite{afv2}.

We consider $T^{reg}$ as the complement of an arrangement of subtori in $T$:  $$T^{reg} = T \setminus \bigcup_{\alpha \in \Phi^+}T_{\alpha}.$$ Following \cite{cg} using this data one can construct a projective wonderful compactification $X_{\Phi}$. In section 5 we will prove that the closure is isomorphic (up to normalization) to $X_{\Phi}$ for root systems of types $A,B,C,D$ and also discuss what happens for the exceptional root systems.



\section{The variety $M_\Phi$ and Gaudin Hamiltonians}
\label{dcp-s}
\subsection{Definition of $M_\Phi$} Let us recall the definition of the De Concini-Procesi wonderful model for the hyperplane arrangement in $\fh$ determined by an \emph{indecomposable} root system $\Phi$. 

Let $ \mathcal{G}' $ denote the set of all non-zero subspaces of $ \fh^* $ which are spanned by a subset of $ \Phi $.   Let $ V \in \mathcal{G}' $.  We say that $ V  = V_1 \oplus \cdots \oplus V_k $ is a \emph{decomposition} of $ V $ if  $ V_1, \dots, V_k \in \mathcal{G}'$, and if whenever $ \alpha  \in \Phi $ and $ \alpha \in V $, then $ \alpha \in V_i $ for some $ i$.  From Section 2.1 of \cite{DCP}, every element of $ \mathcal{G}' $ admits a unique decomposition. Define the set $ \mathcal{G} $ as the set of indecomposable elements of $ \mathcal{G}' $. So any element of $\mathcal{G}'$ can be uniquely decomposed into an orthogonal direct sum of elements from $\mathcal{G}$. We call such decomposition a $\mathcal{G}$-\emph{decomposition.}

\begin{defn}
The De Concini-Procesi space $M_\Phi \subset \prod_{V \in \mathcal{G}} \mathbb{P}(\fh/V^\perp) $ is the closure of the image of the map $$ \bp{\fh^{reg}} \rightarrow  \prod_{V \in \mathcal{G}} \mathbb{P}(\fh/V^\perp).$$ 
\end{defn}
According to \cite{DCP2} the variety $M_{\Phi}$ is smooth, and, moreover, it is obtained from $\bp{\fh}$ by subsequently blowing up the dual subspaces of elements from $\mathcal{G}$, ordered from smaller to bigger dimensions. Let $\widetilde{M}_\Phi$ be the non-compact version of $M_\Phi$, i.e. the closure of $\fh^{reg}$ in  $ \fh\times\prod_{V \in \mathcal{G}} \mathbb{P}(\fh/V^\perp)$. It is obtained from the affine space $\fh$ by the same process of blowing up the dual subspaces of elements from $\mathcal{G}$, ordered from smaller to bigger dimensions. The projection onto the first factor, $\widetilde{M}_\Phi\to\fh$, is, clearly, birational map, such that the fiber over $0\in\fh$ is $M_\Phi$. Let $\fh^{loc}$ be the formal neighborhood of $0\in\fh$. We will also need the local version of $\widetilde{M}_\Phi$, namely
\[
\widetilde{M}_\Phi^{loc}:=\widetilde{M}_\Phi\times_{\fh}\fh^{loc}
\]

\subsection{$M_\Phi$ for a decomposable $\Phi$} For $\fh=\bigoplus\limits_{i=1}^s\fh_i$ and $\Phi=\bigcup\limits_{i=1}^s\Phi_i$ with $\Phi_i=\Phi\cap\fh_i^*$, we define $$
M_\Phi:=\prod\limits_{i=1}^sM_{\Phi_i}.
$$
Note that this is compatible with the definition of $\widetilde{M}_\Phi$ as the subsequent blow-up of $\fh$ at all indecomposable intersections of the hyperplanes $D_\alpha$ for $\alpha\in\Phi$. Namely, the fiber of $\widetilde{M}_\Phi\to\fh$ at $0\in\fh$ is $M_\Phi$. Similarly, we define 
$\widetilde{M}_\Phi^{loc}:=\widetilde{M}_\Phi\times_{\fh}\fh^{loc}$ for any $\Phi$, not necessarily indecomposable.

\subsection{Charts on $M_\Phi$}
A subset $  S  \subset \mathcal{G} $ is called \emph{nested} if, whenever $ P_1, \dots, P_k \in  S  $ are pairwise incomparable (with respect to inclusion), then $ P_1+\ldots+P_k=P_1 \oplus \cdots \oplus P_k $ is a $\mathcal{G}$-decomposition. A subset is called \emph{maximal nested} if it is not contained in any other nested subset . It is known that every maximal nested set has  $\dim \fh$ elements and contains $ \fh^* $. Moreover, nested sets have the following property:
\begin{lem}
Let $\alpha \in \Phi$. Then
the subsets of $S$ containing $\alpha$ are linearly ordered and there is a unique minimal
 $A_S(\alpha) \in S$ such that $\alpha \in A_S(\alpha)$.
\end{lem}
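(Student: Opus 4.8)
The plan is to read off both assertions directly from the definition of a nested set, the only real input being that the sum occurring in a $\mathcal{G}$-decomposition is genuinely direct. Set $S_\alpha := \{P \in S \mid \alpha \in P\}$, the collection of members of the maximal nested set $S$ that contain the root $\alpha$. First I would record that $S_\alpha \neq \emptyset$: as stated above, every maximal nested set contains $\fh^*$, and $\alpha \in \Phi \subset \fh^*$, so $\fh^* \in S_\alpha$.

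To prove that $S_\alpha$ is linearly ordered by inclusion, I would argue by contradiction. Suppose $P_1, P_2 \in S_\alpha$ are incomparable. Then by nestedness of $S$ the sum $P_1 + P_2 = P_1 \oplus P_2$ is a $\mathcal{G}$-decomposition; in particular this sum is direct, so $P_1 \cap P_2 = 0$. But $\alpha$ is a nonzero vector of $\fh^*$ lying in both $P_1$ and $P_2$, whence $\alpha \in P_1 \cap P_2 = 0$, a contradiction. Therefore any two elements of $S_\alpha$ are comparable, i.e. $S_\alpha$ is totally ordered by inclusion.

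Finally, since $S$ is finite, $S_\alpha$ is a finite nonempty totally ordered set and hence has a unique minimal element; I would define $A_S(\alpha)$ to be this element, which by construction is the smallest $P \in S$ with $\alpha \in P$. I do not expect any genuine obstacle here: the argument is essentially a one-line use of the defining property of nested sets. The only point deserving care is the interpretation of the direct-sum condition in a $\mathcal{G}$-decomposition as forcing pairwise trivial intersections, which is precisely what rules out two incomparable members of $S$ both containing the nonzero vector $\alpha$.
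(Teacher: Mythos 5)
Your proof is correct, and the paper itself states this lemma without proof (it is a standard fact from the De Concini--Procesi theory of nested sets). Your argument --- incomparable $P_1,P_2\in S$ would force $P_1\cap P_2=0$ by the directness of the $\mathcal{G}$-decomposition, contradicting $0\neq\alpha\in P_1\cap P_2$, plus $\fh^*\in S$ for nonemptiness --- is exactly the standard one-line justification.
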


It follows, that any nested set $S$ defines a partial order on $\Phi^+$:
$\alpha < \beta \iff A_S(\alpha) \subset A_S(\beta)$. Let $S$ be a nested set.

\begin{defn}\label{def:charts-on-mphi}
The chart $W_S$ is an open subset of $M_{\Phi}$ determined by the following property:
for every $P \in S$ consider $P_1, \ldots, P_k \in \mathcal{G}$  -- maximal (proper) subsets of $P$. Then for $z_P \in P(\fh/V^{\perp})$ we have
$\alpha(z_P) \ne 0$ for any $\alpha \in P \setminus \cup P_i$.
\end{defn}

\begin{prop}\label{pr:DCP-open-cover} \cite{DCP2}
The sets $W_S$, where $S$ is a nested set, form an open covering of $M_{\Phi}$.

\end{prop}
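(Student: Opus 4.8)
The plan is to dispatch openness immediately and then prove the covering property by induction along the iterated blow-up presentation of $M_\Phi$. Openness is built into Definition~\ref{def:charts-on-mphi}: the chart $W_S$ is the intersection, over $P\in S$, of the loci $\{\alpha(z_P)\neq 0\}$ for $\alpha\in P\setminus\bigcup_i P_i$; each root $\alpha\in P$ descends to a linear functional on $\fh/P^\perp$, so "$\alpha(z_P)\neq 0$" is a well-defined non-vanishing condition on $\bp(\fh/P^\perp)$ and hence cuts out an open subset. Thus the whole content is the equality $\bigcup_S W_S=M_\Phi$.

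For this I would use the presentation of $M_\Phi$ recalled after its definition: it is obtained from $\bp(\fh)$ by successively blowing up the (strict transforms of the) centers $\bp(V^\perp)$, $V\in\mathcal{G}$, in order of increasing dimension of the center. I would induct on this sequence of blow-ups, carrying the assertion that after each stage the variety is covered by the standard affine charts of the blow-ups performed so far, and that these charts are exactly the $W_S$ indexed by the nested sets $S$ built from the centers already treated. The base is $\bp(\fh)$ itself, covered by the chart of the trivial nested set $\{\fh^*\}$. At the step blowing up (the strict transform of) $\bp(W^\perp)$, its exceptional divisor is a projective bundle whose standard charts correspond precisely to adjoining $W$ to those previously-constructed nested sets $S$ with $S\cup\{W\}$ still nested; the non-vanishing conditions that survive the passage to such a chart are exactly those defining $W_{S\cup\{W\}}$, because the roots of $W$ not contained in any maximal proper $\mathcal{G}$-subset become the nonzero projective coordinates along the exceptional fiber. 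The same bookkeeping can be phrased pointwise: a point $p=(z_V)_V$ determines its vanishing data $Z(V)=\{\alpha\in\Phi\cap V:\alpha(z_V)=0\}$, one reads off the subspaces $V$ for which $z_V$ is maximally degenerate relative to the maximal proper $\mathcal{G}$-subspaces of $V$, and these subspaces together with $\fh^*$ form a nested set $S_p$ with $p\in W_{S_p}$; equivalently one may realize $p$ as a limit of a regular arc $h(t)\in\fh^{reg}$ and extract $S_p$ from the jumps of the vanishing orders $\mathrm{ord}_{t=0}\,\alpha(h(t))$.

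I expect the main obstacle to be the nestedness verification — proving that the subspaces assembled in $S_p$ (equivalently, the exceptional divisors meeting at $p$) are pairwise comparable under inclusion or else split as a genuine $\mathcal{G}$-decomposition $P_1\oplus\cdots\oplus P_k$. This is exactly where one must use the indecomposability of the elements of $\mathcal{G}$ together with the fact that each $Z(V)$ is a root subsystem closed in the arrangement: two incomparable active subspaces cannot share a root without forcing a strictly larger common active subspace, which in turn forces the direct-sum splitting. The remaining, more routine, difficulty is the exact matching between the standard blow-up charts and the combinatorial conditions of Definition~\ref{def:charts-on-mphi}, i.e. tracking precisely which non-vanishing conditions $\alpha(z_P)\neq 0$ persist at each blow-up; this is the technical core of the chart construction of De Concini and Procesi in \cite{DCP2}.
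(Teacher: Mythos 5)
The paper does not actually prove this proposition: it is imported wholesale from \cite{DCP2} (note the citation attached to the statement), so there is no in-paper argument to measure you against. Your openness argument is complete and correct --- each condition $\alpha(z_P)\neq 0$ is a non-vanishing locus of a linear functional on $\mathbb{P}(\fh/P^{\perp})$, hence open. Your strategy for the covering statement (induction along the iterated blow-up presentation, matched with the pointwise construction of a nested set $S_p$ from the vanishing data of a point) is precisely the route taken in De Concini--Procesi, so the approach is the right one.

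That said, as a standalone proof your proposal is not finished, and you are candid about where: the two load-bearing steps are exactly the ones you defer. First, the nestedness of the collection $S_p$ of ``active'' subspaces at a point $p=(z_V)_V$ is asserted via a one-sentence heuristic (``two incomparable active subspaces cannot share a root without forcing a strictly larger common active subspace''), but this is the place where the indecomposability of elements of $\mathcal{G}$ and the uniqueness of $\mathcal{G}$-decompositions must actually be invoked, and it is not automatic --- one needs that the span of two active subspaces sharing a root is again in $\mathcal{G}'$ and that its $\mathcal{G}$-decomposition is compatible with the two given subspaces. Second, the identification of the standard charts of the exceptional divisor at each blow-up stage with the conditions of Definition~\ref{def:charts-on-mphi} (``the non-vanishing conditions that survive \ldots are exactly those defining $W_{S\cup\{W\}}$'') is stated, not verified; you yourself label it ``the technical core of the chart construction of De Concini and Procesi.'' In effect your proof ends by citing the same source the paper cites, which is acceptable for a result the authors also treat as known, but you should be aware that the substance of the proposition lives entirely in those two deferred verifications.
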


Following \cite[Section 4.1]{afv}, nested sets in a root arrangement are uniquely determined by the following data:
\begin{itemize}
\item Choice of a system of simple roots;
\item Choice of a nested set of subgraphs in the Coxeter graph corresponding to $\Phi$. That is a set of connected subgraphs such that any two of them are either disjoint (in the original graph) or contained one in another.
\end{itemize}
Given such data, the linear spans of simple roots corresponding to vertices of subgraphs from the nested set of subgraphs in the Coxeter graph, form a nested set in the corresponding root arrangement of subspaces. Moreover, any nested set has such form.  

Next, any of such nested sets comes with a  partial order on all roots, as pointed out above.
We will mostly use the following property of $W_S$:
\begin{prop}\label{pr:DCP-reg-functions} \cite[Section 1.4]{DCP}
The subset $W_S$ has the following property: for $\alpha,\beta\in \Phi$, the function $w_{\beta,\alpha}=\dfrac{\beta}{\alpha}$ is regular on $W_S$ if 
$\alpha>\beta$ with respect to the partial order.
\end{prop}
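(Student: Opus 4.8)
The plan is to reduce the statement to the monomialization of the defining linear forms of the arrangement in a coordinate chart of the wonderful model, and then to let the partial order control the exponents. First I would recall the local structure of $M_\Phi$ on the chart $W_S$ (we may take $S$ maximal, since such charts cover $M_\Phi$): the model is obtained by the iterated blow-up of the subspaces $\mathbb{P}(V^\perp)$, $V\in\mathcal{G}$, and on $W_S$ one has coordinate functions $u_A$ indexed by $A\in S$, the divisor $\{u_A=0\}$ being the exceptional divisor produced from $A^\perp$. A root $\alpha\in\Phi$, viewed as a linear form on $\fh$ (so that a ratio of two roots is a genuine rational function on $\mathbb{P}(\fh)$), then admits on $W_S$ a factorization
\[
\alpha=\mu_\alpha\prod_{A\in S,\ \alpha\in A}u_A,
\]
where $\mu_\alpha$ is a unit. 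Indeed, a linear form $\alpha$ vanishes on the blow-up center $A^\perp$ exactly when $A^\perp\subseteq\ker\alpha$, i.e. when $\alpha\in A$, and then to order one; propagating this through the whole sequence of blow-ups yields the displayed product with each exceptional coordinate to the first power and an invertible remainder $\mu_\alpha$.

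Next I would translate the combinatorics. By the Lemma above, the set $I(\gamma):=\{A\in S:\gamma\in A\}$ coincides with $\{A\in S:A\supseteq A_S(\gamma)\}$ and is a chain. The hypothesis $\alpha>\beta$ means $A_S(\beta)\subsetneq A_S(\alpha)$; hence any $A\in S$ with $A\supseteq A_S(\alpha)$ also satisfies $A\supseteq A_S(\beta)$, so that $I(\alpha)\subseteq I(\beta)$. Dividing the two factorizations then gives
\[
w_{\beta,\alpha}=\frac{\beta}{\alpha}=\frac{\mu_\beta}{\mu_\alpha}\prod_{A\in I(\beta)\setminus I(\alpha)}u_A,
\]
which is the product of the unit $\mu_\beta/\mu_\alpha$ with an honest monomial in the $u_A$; in particular it is regular on $W_S$, as claimed. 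The case $A_S(\alpha)=A_S(\beta)$ is excluded by the strict inequality, and the sign ambiguity when passing to negative roots only rescales $\mu_\alpha$, so it is harmless.

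The genuinely nontrivial ingredient is not this final bookkeeping but the monomialization itself: one must verify that, across the entire iterated blow-up, each root acquires only the first-power exceptional factors recorded by $I(\alpha)$ while the leftover factor $\mu_\alpha$ stays invertible on $W_S$, and that the charts for maximal nested sets suffice (here the nestedness of $S$ is what rules out the a priori dangerous configurations in which two roots with comparable $A_S$ would become incomparable on a refinement). This is precisely the local analysis of the De Concini--Procesi charts carried out in \cite[Section~1.4]{DCP}, which I would quote rather than reprove; granting it, the proposition reduces to the short exponent comparison above, driven entirely by the inclusion $I(\alpha)\subseteq I(\beta)$ forced by $\alpha>\beta$.
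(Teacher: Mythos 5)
Your argument is correct and matches what the paper does: the paper states this proposition with no proof beyond the citation to \cite[Section~1.4]{DCP}, and your reduction to the monomialization $\alpha=\mu_\alpha\prod_{A\ni\alpha}u_A$ on the chart $W_S$, followed by the inclusion $I(\alpha)\subseteq I(\beta)$ forced by $A_S(\beta)\subseteq A_S(\alpha)$, is exactly the content of that reference. The only cosmetic remark is that the conclusion holds equally when $A_S(\alpha)=A_S(\beta)$ (the ratio is then a unit), which is in fact needed later when $\alpha$ contains the marked simple root $\beta_i$ in its support but $A_S(\alpha)=A_S(\beta_i)$; your bookkeeping covers this case without change.
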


For what follows, we will need the open charts $\widetilde{W}_S^{loc}\subset\widetilde{M}_\Phi^{loc}$ whose definition is the same as  Definition~\ref{def:charts-on-mphi} with the only change $M_\Phi$ to $\widetilde{M}^{loc}_\Phi$. 
Then Propositions~\ref{pr:DCP-open-cover}~and~\ref{pr:DCP-reg-functions} clearly hold true for $\widetilde{W}_S^{loc}$ as well.

\subsection{Parameter space for Gaudin subspaces} 
\label{AFV_proof}
According to \cite{afv2}, the family of Gaudin subspaces in $\ft_\Phi^1$ extends to $M_\Phi$. For readers' convenience and in order to use it in the proof in the trigonometric case, we reproduce the proof here, in a manner slightly different to that of \cite{afv2}.

\begin{prop}\label{pr:AFV}\cite[Proposition 3.1]{afv2}
The embedding $\iota: \mathbb{P}(\fh^{reg})\to Gr(r, \ft_{\Phi}^{(1)})$ uniquely extends to a map  $\overline{i}: M_{\Phi}  \to Gr(r, \ft_{\Phi}^{(1)})$.
\end{prop}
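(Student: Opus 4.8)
The plan is to extend $\iota$ chart by chart on the open cover of $M_\Phi$ by the sets $W_S$ from Proposition~\ref{pr:DCP-open-cover}, deducing gluing and uniqueness for free from density. It suffices to treat $W_S$ for $S$ a maximal nested set, since these already cover $M_\Phi$. Uniqueness is immediate: $\bp(\fh^{reg})$ is dense in the variety $M_\Phi$ and the Grassmannian is separated, so any two morphisms from this reduced variety agreeing on a dense open coincide. In particular the local extensions built on the various $W_S$ automatically agree on overlaps (each restricts to $\iota$ on the dense interior) and glue to a global morphism $\overline{i}$.

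The heart of the matter is to produce, on a fixed chart $W_S$, a frame of $n$ regular sections of the trivial bundle $W_S\times\ft_\Phi^{(1)}$ whose span is $G(\chi)$ over the interior. First I fix a basis $\{\gamma_P\}_{P\in S}$ of $\fh^*$ adapted to $S$: for each $P\in S$ I choose a simple root $\gamma_P$ of the subsystem $\Phi\cap P$ lying in $P$ but in none of the maximal proper elements of $S$ below $P$, so that $A_S(\gamma_P)=P$. By the combinatorics of maximal nested sets one has $\#\{Q\in S:Q\subseteq R\}=\dim R$ and $R=\spann\{\gamma_Q:Q\in S,\ Q\subseteq R\}$ for every $R\in S$. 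Let $\{h_P\}_{P\in S}\subset\fh$ be the dual basis, so that for a root $\alpha=\sum_{P\in S}c_P(\alpha)\,\gamma_P$ one has $\alpha(h_Q)=c_Q(\alpha)$. I then rescale the Gaudin Hamiltonians and set
\[
\hat H_P(\chi):=\gamma_P(\chi)\,H(h_P,\chi)=\sum_{\alpha\in\Phi^+}c_P(\alpha)\,\frac{\gamma_P}{\alpha}(\chi)\,t_\alpha .
\]
Over the interior each $\hat H_P$ differs from $H(h_P,\chi)$ by the nonzero scalar $\gamma_P(\chi)$, so $\{\hat H_P\}_{P\in S}$ still spans $G(\chi)$ there.

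Next I would check that each $\hat H_P$ extends to a regular section over $W_S$. The compatibility $R=\spann\{\gamma_Q:Q\subseteq R\}$ forces $c_P(\alpha)\neq 0\Rightarrow P\subseteq A_S(\alpha)$, i.e. $\alpha\geq\gamma_P$ in the partial order attached to $S$. Hence every coefficient $\tfrac{\gamma_P}{\alpha}$ that actually occurs is $w_{\gamma_P,\alpha}$ with $\alpha\geq\gamma_P$, which is regular on $W_S$ by Proposition~\ref{pr:DCP-reg-functions} (and, in the equal-level case $A_S(\alpha)=P$, a nonvanishing unit by the defining condition of $W_S$ in Definition~\ref{def:charts-on-mphi}). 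Thus $\hat H_P$ is a regular $\ft_\Phi^{(1)}$-valued function on all of $W_S$.

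Finally, linear independence of $\{\hat H_P\}$ at every point of $W_S$ is automatic: for $P\neq Q$ the coefficient $c_P(\gamma_Q)$ of $t_{\gamma_Q}$ in $\hat H_P$ vanishes (so no singular ratio is even created), while for $P=Q$ it equals $c_P(\gamma_P)\,w_{\gamma_P,\gamma_P}=1$. Hence the $n\times n$ submatrix of coefficients of $\{t_{\gamma_Q}\}_{Q\in S}$ in $\{\hat H_P\}_{P\in S}$ is the identity at all points, so $\{\hat H_P\}$ is a frame and defines the desired morphism $W_S\to Gr(n,\ft_\Phi^{(1)})$ extending $\iota$. The main obstacle is the third step: setting up the adapted basis so that the order-compatibility $c_P(\alpha)\neq 0\Rightarrow A_S(\alpha)\supseteq P$ holds, which is exactly where the nested-set structure and the per-element choice of simple roots do the essential work; once regularity is secured, spanning and independence are formal.
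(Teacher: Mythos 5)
Your proposal is correct and follows essentially the same route as the paper: the same renormalized Hamiltonians $\gamma_P(\chi)H(h_P,\chi)=\sum_\alpha \alpha(h_P)\,w_{\gamma_P,\alpha}\,t_\alpha$ on each chart $W_S$, regularity via Proposition~\ref{pr:DCP-reg-functions} from the implication $\alpha(h_P)\neq 0\Rightarrow \alpha>\gamma_P$, linear independence by projecting onto the span of the $t_{\gamma_Q}$, and uniqueness from Zariski density of $\bp(\fh^{reg})$. Your write-up is somewhat more explicit about the nested-set combinatorics and the gluing on overlaps, but there is no substantive difference in method.
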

\begin{proof}
Let $S$ be a maximal nested set, $W_S$ be the corresponding chart and $x \in W_S$. Denote by $\chi $ the corresponding element of $\fh^{reg}$ and by $\beta_i, i = 1, \ldots, \rk \fg$ -- adapted basis of $\fh^*$ corresponding to a nested set $S$ and by $h_i$ the corresponding dual basis 
of $\fh$. 
Consider the following set of Hamiltonians:
$$x \mapsto H_i(x) = \sum_{\alpha \in \Phi_+} \alpha(h_i) w_{\beta_i \alpha}(x)t_\alpha$$

Note that if $x = \chi \in \fh^{reg}$ then
$$H_i(x) = H(h_i,\chi) \cdot \beta_i(\chi),$$
hence the subspace spanned by $H_i(x)$ it this case coincides with original subspace spanned by $H(h_i, \chi)$.

We either have $\alpha(h_i) = 0$ or root $\alpha$ contains $\beta$ in its decomposition in a basis of simple roots. In the latter case $\alpha > \beta_i$ hence $H_i(x)$ is well-defined for $x\in W_S$. 
Moreover the projection of the linear span of $H_i(x), i=1,\ldots,\rk\fg$ to the linear span of $\left<t_{\beta_i}\right> \subset \ft_{\Phi}$ has the dimension $\rk \fg$ because the projection of $H_i(x)$ is just $t_{\beta_i}$. 
Hence this extends the embedding $\iota$ to $M_{\Phi}$. The uniqueness follows from the fact that $\bp(\fh^{reg}) \subset M_{\Phi}$ is Zariski dense.

\end{proof}

Moreover, the following is true:

\begin{prop}\cite[Theorem 4.4]{afv2}
The map  $\overline{\iota}: M_{\Phi} \to Gr(r, \ft_{\Phi}^{1})$ is a closed embedding.
\end{prop}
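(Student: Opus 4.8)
The plan is to verify the standard criterion for a closed embedding: a morphism from a variety to a projective variety is a closed immersion as soon as it is proper, injective on closed points, and unramified (equivalently, its differential is injective at every point). Since $M_\Phi$ is projective, $\overline{\iota}$ is automatically proper, so it remains to establish (a) injectivity on closed points and (b) injectivity of the differential. I would argue both using the open cover $\{W_S\}$ from Proposition~\ref{pr:DCP-open-cover} together with the explicit formula $H_i(x)=\sum_{\alpha\in\Phi^+}\alpha(h_i)\,w_{\beta_i\alpha}(x)\,t_\alpha$ for the extended family on the chart $W_S$.

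The starting point on each chart is that the projection of $G(x)=\spann\bigl(H_1(x),\dots,H_r(x)\bigr)$ onto $\spann\langle t_{\beta_i}\rangle$ is an isomorphism: since $\beta_j(h_i)=\delta_{ij}$ and $w_{\beta_i\beta_i}\equiv 1$, the generator $t_{\beta_i}$ occurs in $H_i(x)$ with coefficient $1$ and no $t_{\beta_j}$ with $j\neq i$ appears. Hence $G(x)$ has a canonical basis $v_i=t_{\beta_i}+\sum_{\alpha>\beta_i}\alpha(h_i)\,w_{\beta_i\alpha}(x)\,t_\alpha$, and the scalars $w_{\beta_i\alpha}(x)$ are determined by the subspace $G(x)$ alone. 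I would then recall that a system of local coordinates on $W_S$ adapted to the nested set $S$ can be chosen among the functions $w_{\beta_i\alpha}=\beta_i/\alpha$, so that the point $x$ is recovered from $G(x)$ inside the chart; this gives injectivity of $\overline{\iota}|_{W_S}$. To promote this to global injectivity I would show that $G(x)$ already detects the combinatorial type of $x$, namely the nested set $S_x$ of boundary divisors through $x$: this is read off from the vanishing pattern of the recovered entries $w_{\beta_i\alpha}(x)$, which singles out the minimal chart containing $x$. Two points lying on different strata therefore cannot produce the same subspace, and the chartwise inverses glue to a set-theoretic inverse on the image. Alternatively, one can run an induction on $\rk\Phi$ using the fact that each boundary divisor of $M_\Phi$ is a product of De Concini--Procesi models for the arrangements induced by a proper subsystem, on which $\overline{\iota}$ restricts to the corresponding embeddings.

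For the immersion property I would compute the differential through the identification $T_{[G]}\,Gr(r,\ft_\Phi^1)\cong\Hom\bigl(G,\ft_\Phi^1/G\bigr)$. Because no $t_{\beta_j}$ occurs off-diagonally in the canonical basis, the projection gives $\ft_\Phi^1=G\oplus\spann\{t_\alpha:\alpha\notin\{\beta_1,\dots,\beta_r\}\}$, and the derivative of $v_i$ along a tangent vector of $M_\Phi$ equals $\sum_\alpha\alpha(h_i)\,(\diff w_{\beta_i\alpha})\,t_\alpha$, already lying in this complement. Thus $d\overline{\iota}$ vanishes exactly when all $\diff w_{\beta_i\alpha}$ vanish; since the coordinate functions among the $w_{\beta_i\alpha}$ have linearly independent differentials on $W_S$, the map $d\overline{\iota}$ is injective at every point, which is the required unramifiedness.

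The main obstacle I expect is the global injectivity at the deep boundary strata, where several exceptional divisors meet. There one must check that the reconstruction of the entries $w_{\beta_i\alpha}(x)$ from $G(x)$ remains valid and algebraic as one degenerates along several blow-ups simultaneously, and in particular that the coordinates transverse to the exceptional divisors are faithfully recorded by $G(x)$ rather than washed out in the limit. Equivalently, the delicate point is to rule out a collision of limiting subspaces coming from the overlap of two charts $W_S$ and $W_{S'}$; once this is settled, properness closes the image and the criterion above yields that $\overline{\iota}$ is a closed embedding.
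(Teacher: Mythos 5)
First, a point of comparison: the paper does not prove this proposition at all --- it is quoted verbatim from Aguirre--Felder--Veselov \cite[Theorem 4.4]{afv2} as an external input, so there is no internal proof to measure your argument against. Your overall strategy (properness is free since $M_\Phi$ is projective, then check injectivity on points and on tangent spaces and invoke the criterion ``proper $+$ radicial $+$ unramified $=$ closed immersion'') is a legitimate route over $\mathbb{C}$, and your observation that the coefficient of $t_{\beta_j}$ in $H_i(x)$ is $\beta_j(h_i)w_{\beta_i\beta_j}(x)=\delta_{ij}$, so that $G(x)$ has a canonical basis whose off-basis coefficients $\alpha(h_i)w_{\beta_i\alpha}(x)$ are intrinsically recoverable, is the correct starting point.

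The genuine gap is in the step you yourself flag and then defer: the claim that ``a system of local coordinates on $W_S$ can be chosen among the functions $w_{\beta_i\alpha}$,'' so that $x$ is recovered from $G(x)$. The actual chart coordinates are the ratios $u_A=\beta_A/\beta_{A'}$ of \emph{two adapted-basis elements} ($A'$ the parent of $A$ in the nested set), and these are precisely the $w_{\beta_i\beta_j}$ whose coefficients in the $H_i$ are the constants $\delta_{ij}$ --- they are \emph{not} among the recoverable data. To reconstruct $u_A$ one must form quotients such as $w_{\beta_i\alpha}(x)/w_{\beta_j\alpha}(x)$ for an auxiliary root $\alpha$, and one must prove the denominator is nonzero at every point of $W_S$, including deep boundary strata; the same issue reappears in your unramifiedness argument, where you need the differentials $dw_{\beta_i\alpha}$ with $\alpha(h_i)\neq 0$ (not of arbitrary $w_{\beta_i\alpha}$) to span the cotangent space at boundary points where several exceptional divisors meet and many of these differentials vanish. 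This can be repaired --- the non-vanishing needed is essentially the defining condition of the chart $W_S$ in Definition~\ref{def:charts-on-mphi}, namely that $\alpha(z_P)\neq 0$ for $\alpha\in P\setminus\bigcup P_i$, which guarantees for each $A\in S$ an auxiliary root with unit leading coefficient --- but you never invoke that condition, and as written the argument ends with ``once this is settled,'' which is exactly the content of the theorem rather than a routine verification. Until that reconstruction is carried out (or replaced, e.g., by factoring the defining embedding $M_\Phi\hookrightarrow\prod_{V\in\mathcal{G}}\mathbb{P}(\fh/V^{\perp})$ through $\overline{\iota}$), the proof is incomplete.
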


\section{The variety $X_{\Phi}$}

\subsection{Toric variety $\overline{T}$ and its affine charts} Let $\overline{T}$ be the smooth projective toric variety corresponding to the fan of Weyl chambers of $\Phi$. It has an open covering by affine spaces  $U_\Delta$ corresponding to systems of simple roots $\Delta\subset\Phi$. Consider the following smaller open subsets attached to any such $\Delta$ and any subset $I\subset \Delta$: 

$$ U_{\Delta,I}=\{x\in\overline{T}\ |\ y_i(x)\ne0 \ \forall i\in I\}.
$$

The subset $U_{\Delta,I}$ contains the smallest $T$-orbit $O_{\Delta,I}\subset \overline{T}$ determined by $y_j=0$ for all $j\not\in I$. The stationary subgroup of any point of $O_{\Delta,I}$ in $T$ is $Z_I:=\{t\in T\ |\ e^{\alpha_i}(t)=1\ \forall \alpha_i\in I\}$, i.e. the center of the Levi subgroup $L_I\subset G$ corresponding to $I$. So the orbit $O_{\Delta,I}$ gets naturally identified with the maximal torus $T_I:=T/Z_I$ in the adjoint group $L_I^{\ad}:=L_I/Z_I$. 

\begin{prop}
    The open set $U_{\Delta, I}$ is naturally $O_{\Delta,I}\times E_{\Delta,I}=T_I\times E_{\Delta,I}$ where the toric coordinates on $T_I$ are $y_i, i\in I$ and $E_{\Delta,I}$ is the vector space with the coordinates $y_j, j\not\in I$.
\end{prop}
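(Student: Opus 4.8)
The plan is to reduce everything to the explicit affine description of the smooth toric variety $\overline T$ on the chart $U_\Delta$, after which the product structure can simply be read off from the coordinates $y_i$. First I would identify $U_\Delta$ concretely. Since $\overline T$ is the toric variety of the Weyl fan, with cocharacter lattice $N=\Hom_\bz(R,\bz)$ and character lattice $M=R$, the chart $U_\Delta$ is the affine toric variety $\Spec\bc[\sigma_\Delta^\vee\cap R]$ associated with the dominant Weyl-chamber cone $\sigma_\Delta=\{x\mid\alpha_i(x)\ge 0\}$. This cone is spanned by the fundamental coweights $\varpi_i^\vee$, which form a $\bz$-basis of $N$; dualizing, $\sigma_\Delta^\vee$ is spanned by the simple roots $\alpha_i$, which form a $\bz$-basis of $R$. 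Hence $\sigma_\Delta^\vee\cap R$ is the free monoid on $\{\alpha_i\}$ and $U_\Delta\cong\bc^n=\Spec\bc[y_1,\dots,y_n]$, where $y_i=e^{\alpha_i}$ is the monomial attached to $\alpha_i$ and the open torus $T$ is $\{\prod_i y_i\ne 0\}$. This is the only step using the root-system combinatorics and the smoothness of $\overline T$ (the $\varpi_i^\vee$, resp. $\alpha_i$, being genuine lattice bases).

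With this in hand the decomposition is immediate as varieties: by definition $U_{\Delta,I}=\{x\in U_\Delta\mid y_i(x)\ne 0,\ i\in I\}$, which in the coordinates above is precisely $(\bc^\times)^I\times\bc^{\Delta\setminus I}$, the first factor carrying the invertible coordinates $y_i$ $(i\in I)$ and the second the coordinates $y_j$ $(j\notin I)$; set $E_{\Delta,I}:=\bc^{\Delta\setminus I}$. It remains to match the first factor with $T_I=T/Z_I$. The characters $e^{\alpha_i}$ $(i\in I)$ assemble into a homomorphism $T\to(\bc^\times)^I$ with kernel $Z_I$ by definition of $Z_I$; because $\{\alpha_i\}_{i\in I}$ is part of a $\bz$-basis of $R$ the sublattice it generates is saturated, so this is a quotient of tori and yields an isomorphism $T/Z_I\xrightarrow{\sim}(\bc^\times)^I$ with the $y_i$ as toric coordinates. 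The closed locus $\{y_j=0,\ j\notin I\}$ inside $U_{\Delta,I}$ is exactly the single $T$-orbit $O_{\Delta,I}$, on which $T$ acts through $T_I$ with trivial stabilizer, so $O_{\Delta,I}$ is canonically this first factor $(\bc^\times)^I=T_I$, trivialized by the distinguished point $y_i=1$. This recovers the identification $O_{\Delta,I}\cong T_I$ noted before the statement.

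Finally I would record naturality: the $T$-action in the coordinates $(y_i,y_j)$ is diagonal, $y_i\mapsto e^{\alpha_i}(t)y_i$ and $y_j\mapsto e^{\alpha_j}(t)y_j$, so the isomorphism $U_{\Delta,I}\cong T_I\times E_{\Delta,I}$ is $T$-equivariant, with $T$ acting on $T_I$ by translation through $T\to T_I$ and on $E_{\Delta,I}$ linearly; as everything is expressed through the monomials $y_i=e^{\alpha_i}$, the decomposition is canonical once $(\Delta,I)$ is fixed. I do not expect a genuine obstacle here: the whole statement is essentially the standard local structure of a smooth toric variety along a coordinate stratum. The only point that really demands care is the first step — correctly dualizing the Weyl-chamber cone so that the simple roots appear as the coordinate monomials, together with the saturation of $\bz\langle\alpha_i:i\in I\rangle$ in $R$, which is what guarantees that $T/Z_I$ is an honest torus (and not an extension by a finite group) with the $y_i$, $i\in I$, as genuine toric coordinates.
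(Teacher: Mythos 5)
Your argument is correct. Note that the paper states this proposition without any proof, treating it as the standard local structure of the smooth toric variety of the Weyl fan, so there is nothing to compare against; your write-up supplies exactly the expected argument (identifying $U_\Delta\cong\Spec\bc[e^{\alpha_1},\dots,e^{\alpha_n}]$ via the dual of the dominant chamber, reading off the product in the coordinates $y_i$, and using that $\{\alpha_i\}_{i\in I}$ is part of a $\bz$-basis of $R$ to identify the first factor with $T/Z_I$), and I see no gaps.
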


We will make use of the following finer open covering of $\overline{T}$. Let $I\subset \Delta$ and $\Phi_I$ be the root subsystem in $\Phi$ generated by $I$. Define $ U_{\Delta,I}^{loc}$ as the formal neighborhood of $T_{I}\times\{0\}$ in $U_{\Delta, I}=T_I\times E_{\Delta,I}$. So it is the Cartesian product 
$$ U_{\Delta,I}^{loc}=T_I\times E^{loc}_{\Delta,I},
$$
where $E^{loc}_{\Delta,I}$ is the formal neighborhood of the origin in $E_{\Delta,I}$. Note that $O_{\Delta,I}$ is contained in $U_{\Delta,I}^{loc}$, and, on the other hand, any $T$-orbit in $\overline{T}$ is $O_{\Delta,I}$ for some $\Delta$ and $I$. So this is indeed an open covering. We have the following:

\begin{prop}\label{pr:restriction-to-Levi}
    The intersection of the arrangement $\{ \overline{T_\alpha}\ |\ \alpha\in\Phi\}$ in $\overline{T}$ with $U_{\Delta,I}^{loc}$ is $\{\overline{T'_\beta}\times E^{loc}_{\Delta,I}\ |\ \beta\in\Phi_I\}$, where $\{T'_\beta\ |\ \beta\in\Phi_I\}$ is the toric arrangement in $T_I$ corresponding to the root system $\Phi_I$ and $E_{\Delta,I}$ is the complement affine space.
\end{prop}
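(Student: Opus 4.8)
The statement is a local structure result: near the orbit $O_{\Delta,I}$, the toric arrangement $\{\overline{T_\alpha}\}$ in $\overline{T}$ should split as a product of the $\Phi_I$-arrangement in the smaller torus $T_I$ with the transverse affine space. Let me think about what needs to be checked.

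First I would recall the product decomposition $U_{\Delta,I}^{loc}=T_I\times E^{loc}_{\Delta,I}$ already established in the preceding Proposition, with toric coordinates $y_i$ ($i\in I$) on $T_I$ and formal coordinates $y_j$ ($j\notin I$) on $E^{loc}_{\Delta,I}$. The goal is then to compute, for each $\alpha\in\Phi^+$, the intersection $\overline{T_\alpha}\cap U_{\Delta,I}^{loc}$ and show it is either empty/irrelevant or of the product form $\overline{T'_\beta}\times E^{loc}_{\Delta,I}$ for some $\beta\in\Phi_I$.

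The plan is to analyze this orbit-by-orbit, or rather equation-by-equation. Write $\alpha=\sum_{i\in\Delta} c_i\,\alpha_i$ in the simple-root basis adapted to $\Delta$. In the chart $U_\Delta$ the coordinates $y_i$ are the characters dual to the simple roots, so the subtorus $T_\alpha=\{e^\alpha=1\}$ is cut out by the monomial equation $\prod_i y_i^{\,c_i}=1$ on the big torus $T$, and $\overline{T_\alpha}$ is its closure. The key computation is to restrict this equation to the formal neighborhood where $y_j\to 0$ for $j\notin I$. If $\alpha$ has a nonzero coefficient $c_j$ on some simple root $\alpha_j$ with $j\notin I$, then the closure $\overline{T_\alpha}$ either does not meet the deep stratum $O_{\Delta,I}$ or meets it in a way that is absorbed into the boundary; such $\alpha\notin\Phi_I$ contribute no new subtorus transverse to $E^{loc}_{\Delta,I}$. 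If, on the other hand, $\alpha\in\Phi_I$, i.e. $c_j=0$ for all $j\notin I$, then the defining equation involves only the $y_i$ with $i\in I$, hence is pulled back from $T_I$; its zero locus is exactly $\overline{T'_\beta}\times E^{loc}_{\Delta,I}$ with $\beta=\alpha$. Here I would invoke the identification of $O_{\Delta,I}$ with $T_I=T/Z_I$ and note that the characters $e^\alpha$ for $\alpha\in\Phi_I$ descend to $T_I$ precisely because $\alpha$ is trivial on $Z_I$, giving the toric arrangement $\{T'_\beta\}$ of Proposition~statement.

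The main obstacle I anticipate is the careful bookkeeping at the \emph{boundary}: it is not enough to describe $T_\alpha$ as a subtorus of the open $T$; one must control the closure $\overline{T_\alpha}$ inside the toric variety and verify that the intersection with the formal neighborhood $U_{\Delta,I}^{loc}$ does not pick up spurious components supported entirely on $\{y_j=0,\ j\notin I\}$, and conversely that every $\beta\in\Phi_I$ genuinely appears. This requires understanding the toric geometry of how $\overline{T_\alpha}$ meets the orbit $O_{\Delta,I}$, which is governed by which Weyl chambers (cones of the fan) contain the relevant face; the condition that $\alpha\in\Phi_I$ is exactly the condition that the character $e^\alpha$ is constant along the one-parameter degenerations defining $E^{loc}_{\Delta,I}$. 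Once this compatibility between the fan structure and the root-subsystem condition is pinned down, the product form of the intersection follows formally from the product decomposition $U_{\Delta,I}^{loc}=T_I\times E^{loc}_{\Delta,I}$ and the fact that the defining monomials for $\beta\in\Phi_I$ are pulled back from the $T_I$ factor.
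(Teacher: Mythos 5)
Your proposal is correct and follows essentially the same route as the paper: split the roots into $\Phi_I$ and its complement, observe that for positive $\alpha\notin\Phi_I$ the monomial $e^\alpha=\prod_i y_i^{c_i}$ vanishes on $O_{\Delta,I}$ so that $\overline{T_\alpha}$ misses the formal neighborhood, and note that for $\alpha\in\Phi_I$ the equation is pulled back from the $T_I$ factor. The only stylistic remark is that your hedge ``or meets it in a way that is absorbed into the boundary'' is unnecessary --- your own monomial computation already shows $\overline{T_\alpha}\cap O_{\Delta,I}=\emptyset$ outright, which is exactly what the paper asserts.
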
 

\begin{proof} The subvarieties $\overline{T_\beta}$ with $\beta\not\in\Phi_I$ do not touch $O_I$, so we have
$$U_{\Delta,I}^{loc}\subset U_{\Delta,I}\setminus \bigcup\limits_{\beta\not\in\Phi_I}\overline{T_\beta}=\{x\in\overline{T}\ |\ y_i(x)\ne0 \ \forall i\in I\ \text{and} \ y_\alpha(x)\ne1 \ \text{for} \ \alpha\not\in\Phi_I\}.
$$ Moreover, for all $\beta\in\Phi_I$, we have $\overline{T_\beta}\cap U_{\Delta,I}^{loc}=\overline{T'_\beta}\times E_{\Delta,I}^{loc}$.
\end{proof}

\subsection{De Concini -- Gaiffi wonderful model} 
\label{pwm}
We recall definition of De Concini-Gaiffi wonderful model following \cite{cg}. Let $\cg^{\prime}$ be the set of {\em layers} i.e. the connected components of all the intersections of the closures
of $\overline{T_{\alpha}}$ inside $\overline{T}$. 
This set forms an arrangement of subvarieties inside $\overline{T}$.
We say that element of $Y \in \cg^{\prime}$ is decomposable if
$Y$ is locally a transversal intersection of $Y_1$ and $Y_2$ such that for any $\alpha \in \Phi^+$ such that $Y \subset \overline{T}_{\alpha}$ then $Y_1\subset \overline{T}_{\alpha} $ or $Y_2\subset \overline{T}_{\alpha} $.  That is, to any layer $Y$ we assign an open subset $U_Y^{loc}$ being the formal neighborhood of $Y$. Then $Y$ is decomposable if it is a transversal intersection of $Y_1\cap U_Y^{loc}$ and $Y_2\cap U_Y^{loc}$ and for any $\alpha \in \Phi^+$ such that $Y \subset \overline{T}_{\alpha}$ we have either $Y_1\subset \overline{T}_{\alpha} $ or $Y_2\subset \overline{T}_{\alpha} $. We call a layer $Y \in \cg^{\prime}$ indecomposable if it is locally indecomposable in the above sense.
It is known that the set $\mathcal{G}$ of all indecomposable $Y \in \cg^{\prime}$ forms {a \em building set}, see \cite[Definition 2.5]{cg}. 

\begin{rem}
Moci \cite{m} gives two different definitions of an irreducible layer. We use \emph{$\bc$-irreducible layers}, see  \cite[Section 3.1]{m}.  
\end{rem}

\begin{rem}
    In the definition of decomposition of $Y$, we require $Y$ to be a transversal intersection of $Y_1$ and $Y_2$ only locally, because in principle $Y$ could be one of possibly many connected components in the intersection of $Y_1$ and $Y_2$. The first example where this happens is $\Phi$ of the type $B_2$, and $Y_1, Y_2$ are codimension $1$ subtori in $T$ corresponding to long roots. Then their intersection consists of two points. One of them is the unity, in which all the root subtori meet (so it is indecomposable), while another is such $Y$ that decomposes into $Y_1$ and $Y_2$.
\end{rem}


We define $X_{\Phi}$ as the closure of $T^{reg}$ under the map:
$$T^{reg} \to \prod_{G \in \cg} Bl_G \overline{T}.$$
It is known that the variety $X_{\Phi}$ is smooth.
We also need the following description of $X_{\Phi}$ (see \cite{cg}):

\begin{thm}
If one arranges the elements $G_1,G_2,...,G_N$  of $\cg$ in the dimension increasing order, then \(X_{\Phi}\) is isomorphic to the variety  \[Bl_{{\widetilde G_N}}Bl_{{\widetilde G_{N-1}}}\
\cdots Bl_{{\widetilde G_2}}Bl_{G_1} \overline{T}\]
where \({\widetilde G_i}\) denotes the dominant transform of $G_i$ in $Bl_{{\widetilde G_{i-1}}}
\cdots Bl_{{\widetilde G_2}}Bl_{G_1} \overline{T}$.
\end{thm}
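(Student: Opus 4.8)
The plan is to deduce the theorem from the general theory of wonderful models of arrangements of subvarieties, following the strategy of De Concini--Gaiffi \cite{cg}, by splitting the statement into two parts. First, one must check that the collection $\cg$ of locally indecomposable layers genuinely satisfies the building-set axioms for the arrangement $\{\overline{T_\alpha}\}$ in $\overline{T}$. Second, one must show that for \emph{any} building set the closure defining $X_\Phi$ inside the product $\prod_{G\in\cg} Bl_G\overline{T}$ agrees with the iterated blow-up along the dominant transforms, provided the centers are blown up in an order refining the inclusion partial order. Since an inclusion $G_i\subsetneq G_j$ of layers forces $\dim G_i<\dim G_j$, the dimension-increasing order is such a refinement, and the second part then produces exactly the asserted iterated blow-up. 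Smoothness of $X_\Phi$ is a byproduct: transversality built into the building-set condition guarantees that each dominant transform is a smooth center in a smooth ambient variety, so every blow-up preserves smoothness.

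To verify the building-set property I would argue locally, using the covering $\{U^{loc}_{\Delta,I}\}$ together with Proposition~\ref{pr:restriction-to-Levi}. On $U^{loc}_{\Delta,I}=T_I\times E^{loc}_{\Delta,I}$ the arrangement is the product of the toric arrangement of the subsystem $\Phi_I$ on $T_I$ with the trivial factor $E^{loc}_{\Delta,I}$, and local indecomposability is compatible with this splitting: a layer through $O_{\Delta,I}$ is indecomposable in $\overline{T}$ precisely when its $T_I$-component is indecomposable for $\Phi_I$. Near the identity of $T_I$ the logarithm identifies the toric arrangement with the linear hyperplane arrangement of $\Phi_I$ in $\fh$, where the indecomposable subspaces form the De Concini--Procesi building set thanks to the unique decomposition result recalled from \cite{DCP}. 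Thus the axioms — that the minimal layers of $\cg$ containing a given layer $S$ meet cleanly and transversally and reconstruct $S$ as their intersection — hold on each chart, and being local in nature they hold on all of $\overline{T}$.

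For the second part I would run the standard induction on $N=|\cg|$. Blowing up a minimal (lowest-dimensional) element $G_1$ yields $\overline{T}^{(1)}:=Bl_{G_1}\overline{T}$; the key lemma is that the dominant transforms $\widetilde G_2,\dots,\widetilde G_N$ again form a building set of the transformed arrangement in $\overline{T}^{(1)}$ and that the wonderful model is unchanged, i.e. $X_\Phi$ is isomorphic to the closure of $T^{reg}$ inside $\prod_{i\ge 2}Bl_{\widetilde G_i}\overline{T}^{(1)}$. Granting this, the inductive hypothesis applied to the $(N-1)$-element building set $\{\widetilde G_i\}_{i\ge 2}$ identifies that closure with $Bl_{\widetilde G_N}\cdots Bl_{\widetilde G_2}\overline{T}^{(1)}$, and prepending the blow-up of $G_1$ gives the claim. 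The invariance under a single blow-up is proved by comparing universal properties: the projection of $X_\Phi$ onto the $G_1$-factor realizes $Bl_{G_1}\overline{T}$, and along it the remaining factors pull back to the single blow-ups of $\overline{T}^{(1)}$ at the dominant transforms.

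The main obstacle is the building-set verification in the presence of \emph{disconnected} intersections of the $\overline{T_\alpha}$, already visible in type $B_2$ (cf. the Remark following the definition of $\cg$): one connected component of an intersection may be decomposable while another component, through the identity, is not, so the purely lattice-theoretic argument of the linear case must be replaced by a genuinely local one carried out component by component. Showing that the dominant transforms remain a building set after each blow-up — in particular that the relevant local intersections stay transversal and that no indecomposable layer is spuriously created or lost — is the technical heart, and it is precisely here that the product structure of Proposition~\ref{pr:restriction-to-Levi}, which confines all the subtlety to the semisimple factor $\Phi_I$, does the real work. The toric boundary $\overline{T}\setminus T$ contributes no extra difficulty once the arrangement is known to restrict to a product along each orbit.
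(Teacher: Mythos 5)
The paper does not prove this statement at all: it is quoted verbatim from De Concini--Gaiffi (``We also need the following description of $X_{\Phi}$ (see \cite{cg})''), and likewise the fact that $\cg$ is a building set is imported by citation. So there is no internal proof to compare yours against; the honest comparison is with the argument in \cite{cg}, which itself rests on Li Li's general machinery for wonderful compactifications of arrangements of subvarieties. Your outline is essentially that argument: (i) check the building-set axioms for the locally indecomposable layers, (ii) identify the closure in $\prod_{G\in\cg}Bl_G\overline{T}$ with the iterated blow-up by induction on $|\cg|$, using that the dimension order refines inclusion. In that sense the route is the right one and not genuinely different from the source.

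That said, as a proof your text is a reduction rather than an argument: both pillars are asserted, not established. The single-blow-up invariance lemma --- that after blowing up a minimal $G_1$ the dominant transforms again form a building set for the transformed arrangement and the closure of $T^{reg}$ is unchanged --- is precisely the content of the Li Li / De Concini--Gaiffi theorem, i.e.\ a statement of the same depth as the one you are proving; you must either cite it or prove it, and ``comparing universal properties'' is not yet a proof of it. On the building-set verification, your localization is slightly off: Proposition~\ref{pr:restriction-to-Levi} reduces to the toric arrangement of $\Phi_I$ on $T_I$, but the local model at a point $y\in T_I$ is the hyperplane arrangement of the possibly larger-than-Levi closed subsystem $\Phi_y=\{\alpha: e^\alpha(y)=1\}$ (e.g.\ a Borel--de Siebenthal subsystem, as in the paper's $B_2$ remark and the $G_2$ example), not just the arrangement of $\Phi_I$ near the identity. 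The correct statement is that at every point the germ of the arrangement is the De Concini--Procesi arrangement of some closed root subsystem, and indecomposability of a layer at $y$ matches indecomposability of the corresponding subspace for $\Phi_y$; you gesture at this in your ``main obstacle'' paragraph but do not carry it out. Since the paper itself treats the theorem as known, the cleanest fix is simply to cite \cite{cg} for both pillars rather than to re-derive them.
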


Similarly to the open cover of $\overline{T}$ by the sets $U_{\Delta,I}^{loc}$, we can cover the whole $X_\Phi$ by open subsets for all $\Delta$ and $I$:
$$ X_{\Delta,I}^{loc}:=X_\Phi\times_{\overline{T}}U_{\Delta,I}^{loc}.
$$
In particular, for any $\Delta$, we have $X_{\Delta,\Delta}^{loc}=X_\Phi^\circ$ and $X_{\Delta,\emptyset}^{loc}=U_{\Delta,\emptyset}^{loc}$.

\begin{prop}\label{pr:DCG-over-t-orbit}
    The isomorphism $U_{\Delta,I}^{loc}\simeq T_I\times E_{\Delta,I}^{loc}$ induces an isomorphism $X_{\Delta,I}^{loc}= X_{\Phi_I}^\circ\times E_{\Delta,I}^{loc}$. 
\end{prop}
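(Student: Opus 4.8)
The plan is to use that the wonderful model is built by a sequence of blow-ups that is \emph{local} on $\overline{T}$, and that the center of each blow-up restricts, near the orbit $O_{\Delta,I}$, to a product with the transverse factor $E_{\Delta,I}^{loc}$. Concretely, $X_\Phi=Bl_{\widetilde G_N}\cdots Bl_{G_1}\overline{T}$ along the building set $\cg=\{G_1,\dots,G_N\}$ in dimension-increasing order, and blow-ups commute with passage to open subsets and with formal completion. Hence $X_{\Delta,I}^{loc}=X_\Phi\times_{\overline{T}}U_{\Delta,I}^{loc}$ is the iterated blow-up of $U_{\Delta,I}^{loc}=T_I\times E_{\Delta,I}^{loc}$ along the traces of the $G_i$. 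The whole statement then splits into two independent inputs: (i) the traces of $\cg$ on $U_{\Delta,I}^{loc}$ are exactly $\{G'\times E_{\Delta,I}^{loc}\}$, where $G'$ runs over the building set of the toric arrangement $\{T'_\beta\mid\beta\in\Phi_I\}$ in $T_I$; and (ii) for a center of product shape one has $Bl_{Z\times E_{\Delta,I}^{loc}}\bigl(Y\times E_{\Delta,I}^{loc}\bigr)=\bigl(Bl_Z Y\bigr)\times E_{\Delta,I}^{loc}$. Input (ii) is the compatibility of blow-up with flat base change along the projection $Y\times E_{\Delta,I}^{loc}\to Y$ (the ideal of $Z\times E_{\Delta,I}^{loc}$ being the pullback of the ideal of $Z$), and I would record it once as a lemma and apply it inductively together with the analogous statement for dominant transforms.

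For input (i) I would first identify which members of $\cg$ survive. A closed layer meets the formal neighborhood $U_{\Delta,I}^{loc}$ if and only if it meets $O_{\Delta,I}$, and by Proposition~\ref{pr:restriction-to-Levi} the only divisors $\overline{T_\alpha}$ through $O_{\Delta,I}$ are those with $\alpha\in\Phi_I$; hence every layer meeting $U_{\Delta,I}^{loc}$ is a component of an intersection of the $\overline{T_\beta}$, $\beta\in\Phi_I$, and the same proposition shows its trace is $Y'\times E_{\Delta,I}^{loc}$ for a layer $Y'$ of the $\Phi_I$-arrangement in $T_I$. It then remains to match \emph{indecomposability}: since the notion of a decomposable layer in Section~\ref{pwm} is tested on a formal neighborhood and is constant along a connected layer, one may test it at a point of $O_{\Delta,I}$, where Proposition~\ref{pr:restriction-to-Levi} presents the arrangement as a product; because the factor $E_{\Delta,I}^{loc}$ is a transversal factor common to all $\overline{T'_\beta}\times E_{\Delta,I}^{loc}$, a local transversal splitting of $Y'\times E_{\Delta,I}^{loc}$ is the same datum as one of $Y'$ in $T_I$. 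Thus $Y$ is indecomposable for $\Phi$ exactly when $Y'$ is indecomposable for $\Phi_I$, and the traces of $\cg$ are precisely the $G'\times E_{\Delta,I}^{loc}$.

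Finally I would assemble the pieces. Members of $\cg$ disjoint from $U_{\Delta,I}^{loc}$ are blown up along centers missing the open set, so they act as isomorphisms over it and may be omitted; on the surviving centers, adding the constant $\dim E_{\Delta,I}$ to every dimension shows that the dimension-increasing order for $\cg$ restricts to the dimension-increasing order for the building set of $\Phi_I$. Applying input (ii) at each stage---and noting that the dominant transform of $G'\times E_{\Delta,I}^{loc}$ is the dominant transform of $G'$ times $E_{\Delta,I}^{loc}$---the iterated blow-up of $T_I\times E_{\Delta,I}^{loc}$ equals $\bigl(Bl_{\widetilde G'_m}\cdots Bl_{G'_1}T_I\bigr)\times E_{\Delta,I}^{loc}$. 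The first factor is, by the very definition of $X^{\circ}$ (the case $I=\Delta$ applied to the root system $\Phi_I$, for which $U^{loc}=T_I$), equal to $X_{\Phi_I}^{\circ}$, yielding $X_{\Delta,I}^{loc}=X_{\Phi_I}^{\circ}\times E_{\Delta,I}^{loc}$. The main obstacle is input (i), specifically the Levi-locality of indecomposability: one must be sure that passing to the product arrangement near $O_{\Delta,I}$ neither creates nor destroys building-set elements, i.e.\ that the minimal De Concini--Gaiffi building set of $\Phi$ restricts cleanly to that of $\Phi_I$; once this bookkeeping is secured, input (ii) and the assembly are formal.
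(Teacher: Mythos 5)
Your proof is correct and follows essentially the same route as the paper, which disposes of the statement in one line by citing Proposition~\ref{pr:restriction-to-Levi} together with the compatibility of blow-ups with restriction to open subsets. Your expansion supplies exactly the details the paper leaves implicit --- in particular the matching of indecomposable layers across the product decomposition and the preservation of the dimension-increasing order --- and the point you flag as the main obstacle (that the minimal building set of $\Phi$ restricts cleanly to that of $\Phi_I$ near $O_{\Delta,I}$) is indeed the only nontrivial bookkeeping hidden in the paper's appeal to Proposition~\ref{pr:restriction-to-Levi}.
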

\begin{proof}
    Follows from Proposition~\ref{pr:restriction-to-Levi} and the fact that blows-up commute with restrictions to open subsets.
\end{proof} 

This reduces the construction of charts on $X_\Phi$ to that for $X_\Phi^\circ$.

\subsection{Charts on $X_{\Phi}^{\circ}$}

For any layer $Y$, let $Y^\circ$ be the complement of all smaller layers in $Y$. Clearly, $Y^\circ$ is open in $Y$. Consider the open covering of $T$ by formal neighborhoods of $Y^\circ$ for all layers $Y$. Slightly abusing the notations, denote such formal neighborhood by  $U_Y^{loc}$. Since $T$ is smooth, each $U_Y^{loc}$ is the Cartesian product $U_Y^{loc}=Y^\circ\times E_Y$ where $E_Y$ is the formal neighborhood of $0$ in the transversal tangent space which is naturally identified with the Lie subalgebra $\mathfrak{h}_Y\subset\fh$ being the the intersection of $\fh$ with the semisimple part of the centralizer subalgebra $\mathfrak{z}_Y\subset\fg$ of any element of $Y^\circ$. 

Denote by $\Phi_Y$ the root system of $\fz_Y$. Clearly, it is a closed root subsystem in $\Phi$. Next, the intersection of the arrangement with $U_Y^{loc}$ is formed by the products $Y^\circ\times Z$ where $Z$ are elements of the root arrangement in $\mathfrak{h}_Y$ determined by $\mathfrak{z}_Y$. This means that we can cover $X_\Phi^\circ$ by open subsets $V_Y:=X_\Phi^\circ\times_{T}U_Y^{loc}$. 

\begin{prop}\label{pr:formal-neighborhood-of-layer}
    $V_Y$ is the Cartesian product $Y^\circ\times \widetilde{M}_{\Phi_Y}^{loc}$, where $\widetilde{M}_{\Phi_Y}^{loc}$ is the formal neighborhood of the central fiber ${M}_{\Phi_Y}$ in $\widetilde{M}_{\Phi_Y}$.
\end{prop}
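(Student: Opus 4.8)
The plan is to reduce the De Concini--Gaiffi construction over the formal neighborhood of $Y^\circ$ to the De Concini--Procesi construction for the hyperplane arrangement of $\Phi_Y$, exploiting that the iterated blow-up defining $X_\Phi$ commutes both with restriction to the formal subset $U_Y^{loc}$ and with taking the product with the smooth factor $Y^\circ$. This is the ``internal layer'' analogue of Proposition~\ref{pr:DCG-over-t-orbit}, and the argument runs in parallel. First I would record the local product picture set up just above: $U_Y^{loc}=Y^\circ\times E_Y$ with $E_Y$ the formal neighborhood of $0$ in the transversal space $\fh_Y$, and the restriction of the toric arrangement to $U_Y^{loc}$ is the product of $Y^\circ$ with the central hyperplane arrangement in $\fh_Y$ cut out by $\Phi_Y$. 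Concretely, the only layers of the full toric arrangement whose closure meets $Y^\circ$ are those containing $Y$, and under the product decomposition these are exactly the subvarieties $Y^\circ\times Z$, where $Z$ ranges over the flats of the $\Phi_Y$-arrangement; thus $E_Y\cong\fh_Y^{loc}$ carries precisely the root arrangement of $\Phi_Y$.

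Next I would match the building sets. A layer $Y^\circ\times Z$ is indecomposable in the De Concini--Gaiffi sense for $\Phi$ if and only if $Z$ is an indecomposable subspace (an element of $\mathcal{G}$) for the De Concini--Procesi arrangement of $\Phi_Y$. This is essentially forced by the definitions, since decomposability of a layer was defined purely transversally: a local transversal splitting $Y^\circ\times Z=(Y^\circ\times Z_1)\cap(Y^\circ\times Z_2)$ compatible with the $\overline{T_\alpha}$ containing it is the same datum as a $\mathcal{G}$-decomposition $Z=Z_1\oplus Z_2$ in $\fh_Y$, by Proposition~\ref{pr:restriction-to-Levi}. Hence the De Concini--Gaiffi building set of $\Phi$, restricted to $U_Y^{loc}$, is exactly $\{\,Y^\circ\times Z\mid Z\in\mathcal{G}(\Phi_Y)\,\}$.

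Finally I would run the blow-up sequence. Blow-ups commute with restriction to the open/formal subset $U_Y^{loc}$, so centers disjoint from $U_Y^{loc}$ restrict to isomorphisms and may be discarded, while each relevant center $Y^\circ\times Z$ blows up in the product to give $Y^\circ\times Bl_Z E_Y$, blow-up being local on the base and compatible with the smooth factor $Y^\circ$. Since $\dim(Y^\circ\times Z)=\dim Y^\circ+\dim Z$ with $\dim Y^\circ$ constant, the dimension-increasing order used to build $X_\Phi$ restricts to the dimension-increasing order on $\mathcal{G}(\Phi_Y)$. Therefore the restriction of the whole sequence to $U_Y^{loc}$ equals $Y^\circ$ times the sequence of blow-ups of the subspaces of $\mathcal{G}(\Phi_Y)$ in $E_Y=\fh_Y^{loc}$, which is by definition the formal neighborhood $\widetilde{M}_{\Phi_Y}^{loc}$ of the central fiber $M_{\Phi_Y}$. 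This yields $V_Y=Y^\circ\times\widetilde{M}_{\Phi_Y}^{loc}$.

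The hardest part will be the building-set matching of the second paragraph: verifying that local (transversal) indecomposability of a toric layer along $Y^\circ$ coincides on the nose with De Concini--Procesi indecomposability for $\Phi_Y$, and then making the slogan ``blow-ups commute with products and with formal localization'' precise enough to control both the blow-up ordering and the dominant transforms of the discarded centers. The first point is close to tautological given the local definition of the building set together with Proposition~\ref{pr:restriction-to-Levi}; the second is standard but must be handled with care, since $X_\Phi$ is an iterated blow-up rather than a single one.
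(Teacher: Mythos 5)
Your argument is correct and follows essentially the same route as the paper's own (much terser) proof: both rest on the facts that the iterated blow-up commutes with restriction to the formal neighborhood $U_Y^{loc}=Y^\circ\times E_Y$ and that the arrangement restricted there is $Y^\circ$ times the root arrangement of $\Phi_Y$ in $\fh_Y$. The extra detail you supply on matching the building sets and the blow-up ordering is exactly what the paper leaves implicit, so there is nothing to correct.
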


\begin{proof}
 Since $X_\Phi^\circ$ is an iterated blow-up of the toric arrangement, this open subset is a similar blow-up of the intersection of this arrangement with $U_Y^{loc}$. The latter is formed by $Y^\circ\times Z$ where $Z$ are elements of the root arrangement in $\mathfrak{h}_Y$.   
\end{proof}

\begin{defn} For any maximal nested set $S$ in the root arrangement of $\fh_Y$, the open subset $V_{Y,S}$ is $Y^\circ\times \widetilde{W}_S^{loc}\subset Y^\circ\times \widetilde{M}_{\Phi_Y}^{loc}=V_Y$.
\end{defn}

The case of 0-dimensional layer $Y$ includes the situation when the root system $\Phi_Y$ is a full-rank maximal closed root subsystem in $\Phi$. In this case all the elements in the torus $T$ such that $\Phi_Y$ is the root system of their centralizer belong to the finite subgroup dual to the quotient $R/R_Y$ of the root lattices for $\Phi$ and $\Phi_Y$. Those are precisely $y \in T$ such that the centralizer $\fz_{\fg}(y)$ is a Borel de-Siebenthal subalgebra in $\fg$. For simple $\Phi$, such a quotient is always cyclic, of the order $1,2,3$, or $5$, moreover, for classical $\fg$ we always have $m=1$ or $2$. See \cite{BDS} for the complete classification of Borel--de Siebenthal subalgebras. So the above charts are in one-to-one correspondence with the following data:


    \begin{itemize}
        \item A maximal closed root subsystem $\Phi' \subset \Phi$, along with an isomorphism of $R / R_{\Phi'}$ with the  $ \mathbb{Z}/m\mathbb{Z}$ for an appropriate $m$. 
        \item A maximal nested set $S'$ corresponding to the root arrangement of hyperplanes in $\fh$  determined by $\Phi'$.
    \end{itemize}

\begin{rem}
    Not every $0$-dimensional layer corresponds to a Borel--de Siebenthal subalgebra. Indeed, for $\Phi$ of the type $C_n$, any splitting $n=k_1+\ldots+k_s$ determines a maximal rank subsystem $\Phi'\subset\Phi$ being the sum of root systems of types $C_{k_i}$. For $s>2$, this is a non-maximal subsystem of maximal rank. 
\end{rem}

We will use the fact that some particular rational functions on $T$ are regular on $V_{Y,S}$. Namely, a layer $Y$ and a nested set $S$ in $\mathcal{G}_Y$ determines a partial order on $\Phi_Y$: to any $\alpha\in\Phi_Y$, we assign $A_\alpha\in S$ being the maximal $A\in S$ such that $\alpha=0$ on $A$; then we have $\alpha>\beta$ iff $A_\alpha\supset A_\beta$. Then the following holds:

\begin{prop}\label{pr:Moci-regular-functions}
    The function $\dfrac{e^{\beta}-1}{e^{\alpha}-1}$ is regular on $V_{Y,S}$ if $\alpha >\beta$.
\end{prop}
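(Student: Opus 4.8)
The plan is to reduce the statement to the analogous regularity fact on the De Concini--Procesi model $\widetilde{M}_{\Phi_Y}^{loc}$, where it follows from Proposition~\ref{pr:DCP-reg-functions}, by analyzing how the functions $e^\gamma - 1$ behave near the layer $Y^\circ$. Recall from Proposition~\ref{pr:formal-neighborhood-of-layer} that $V_{Y,S} = Y^\circ \times \widetilde{W}_S^{loc}$, where $\widetilde{W}_S^{loc} \subset \widetilde{M}_{\Phi_Y}^{loc}$ is a chart in the (local) wonderful model for the \emph{hyperplane} arrangement attached to the closed subsystem $\Phi_Y \subset \Phi$ in $\fh_Y$. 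The point is that both $\alpha$ and $\beta$ lie in $\Phi_Y$ (this is forced by $\alpha > \beta$ being defined only for roots in $\Phi_Y$), so near a generic point $y_0 \in Y^\circ$ the function $e^\alpha - 1$ vanishes precisely on $\overline{T_\alpha}$, and its local behavior is governed entirely by the linearized coordinate $\alpha$ on the transversal slice $\fh_Y$.

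First I would set up the local coordinates. Fix $y_0 \in Y^\circ$; then $e^\alpha(y_0) = 1$ for every $\alpha \in \Phi_Y$ and $e^\gamma(y_0) \ne 1$ for $\gamma \notin \Phi_Y$. Writing $C = y_0 \cdot \exp(v)$ with $v$ in the formal neighborhood $E_Y = \fh_Y^{loc}$, we have $e^\alpha(C) = e^{\alpha(v)}$ for $\alpha \in \Phi_Y$, so that
\begin{equation*}
e^\alpha(C) - 1 = e^{\alpha(v)} - 1 = \alpha(v)\cdot u_\alpha(v),
\end{equation*}
where $u_\alpha(v) = \dfrac{e^{\alpha(v)}-1}{\alpha(v)}$ is a unit (invertible formal power series, equal to $1$ at $v=0$) on $\widetilde{M}_{\Phi_Y}^{loc}$, since it is a nonvanishing analytic function of $\alpha(v)$ pulled back under the blow-up map. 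Consequently
\begin{equation*}
\frac{e^\beta(C)-1}{e^\alpha(C)-1} = \frac{\beta(v)}{\alpha(v)}\cdot\frac{u_\beta(v)}{u_\alpha(v)} = w_{\beta,\alpha}(v)\cdot\frac{u_\beta(v)}{u_\alpha(v)},
\end{equation*}
in the notation of Proposition~\ref{pr:DCP-reg-functions}, where $w_{\beta,\alpha} = \beta/\alpha$ is the ratio of linear forms on $\fh_Y$.

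Next I would invoke the regularity of the two factors separately. The ratio $u_\beta/u_\alpha$ is a ratio of two units, hence a unit, and in particular regular everywhere on $\widetilde{W}_S^{loc}$. The factor $w_{\beta,\alpha}(v) = \beta/\alpha$ is regular on $\widetilde{W}_S^{loc}$ precisely by Proposition~\ref{pr:DCP-reg-functions}, which applies because the partial order on $\Phi_Y$ induced by the nested set $S$ is by construction the one used there, and we are assuming $\alpha > \beta$. The product of a regular function and a unit is regular, so $\dfrac{e^\beta-1}{e^\alpha-1}$ is regular on $\widetilde{W}_S^{loc}$, and hence on $V_{Y,S} = Y^\circ \times \widetilde{W}_S^{loc}$ since it is independent of the $Y^\circ$ coordinate.

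The main obstacle, and the step requiring the most care, is the first one: verifying that the unit factor $u_\alpha(v)$ really does pull back to an invertible function on the blow-up $\widetilde{M}_{\Phi_Y}^{loc}$, i.e. that exponentiating does not introduce spurious zeros or poles along the exceptional divisors. This is where the passage from the toric (multiplicative) setting to the hyperplane (additive) setting is genuinely used: one must check that $u_\alpha$ depends on $v$ only through the linear form $\alpha(v)$, whose vanishing locus is exactly the hyperplane $D_\alpha$ being resolved, so that $u_\alpha = (e^{\alpha(v)}-1)/\alpha(v)$ is a power series in $\alpha(v)$ with constant term $1$ and therefore remains a unit after any sequence of blow-ups along the arrangement. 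Once this linearization is justified—essentially the content of Proposition~\ref{pr:formal-neighborhood-of-layer} identifying $V_Y$ with a product involving $\widetilde{M}_{\Phi_Y}^{loc}$—the remaining steps are formal.
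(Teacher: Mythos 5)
Your proof is correct and follows essentially the same route as the paper: under the identification $V_{Y,S}=Y^\circ\times \widetilde{W}_S^{loc}$ you factor $\dfrac{e^{\beta}-1}{e^{\alpha}-1}$ as $\dfrac{\beta}{\alpha}$ times a unit and invoke Proposition~\ref{pr:DCP-reg-functions}, which is exactly the paper's argument. The only difference is that you make the unit factor $u_\beta/u_\alpha$ explicit, where the paper merely asserts the existence of a regular function equal to $1$ on $Y$.
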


\begin{proof}
    Indeed, under the isomorphism $V_{Y,S}=Y^\circ\times \widetilde{W}_S^{loc}$, the above function is the pullback of $\dfrac{\beta}{\alpha}$ on the second factor multiplied by a pullback of a regular function on $U_Y^{loc}$ that equals $1$ on $Y$. So the statement follows from Proposition~\ref{pr:DCP-reg-functions}.
\end{proof}

We can choose an \emph{adapted system of simple roots} $ \Delta_{Y,S}\subset\Phi_Y$ such that any element of $S$ is $A_\beta$ for some $\beta\in\Delta_{Y,S}$. Then we have the following
\begin{cor}\label{co:alpha-contains-beta}
    For $\alpha\in\Phi_Y$ and $\beta\in\Delta_{Y,S}$, the function $\dfrac{e^{\beta}-1}{e^{\alpha}-1}$ is regular on $V_{Y,S}$ if $\alpha$ contains $\beta$ in its decomposition with respect to $\Delta_{Y,S}$.
\end{cor}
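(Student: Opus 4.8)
The plan is to deduce the corollary directly from Proposition~\ref{pr:Moci-regular-functions} by translating the combinatorial hypothesis into the order-theoretic one. Concretely, it suffices to show that whenever a simple root $\beta\in\Delta_{Y,S}$ occurs with nonzero coefficient in the $\Delta_{Y,S}$-expansion of a root $\alpha\in\Phi_Y$, one has $\alpha\ge\beta$ in the partial order attached to $S$, with $\alpha>\beta$ in all but a degenerate case. Once this is in place, the case $\alpha>\beta$ is handled verbatim by Proposition~\ref{pr:Moci-regular-functions}, and the only remaining work is the boundary case where $\alpha$ and $\beta$ determine the same element of $S$.

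First I would recall the combinatorial description of $S$ afforded by the adapted system of simple roots. By the discussion following Proposition~\ref{pr:DCP-open-cover}, every $A\in S$ is the span of the simple roots lying in a connected subgraph $\Gamma_A$ of the Coxeter graph of $\Phi_Y$ with respect to $\Delta_{Y,S}$, and the subgraphs $\{\Gamma_A\}_{A\in S}$ form a nested family. The key elementary observation is that a root $\gamma\in\Phi_Y$ lies in the subspace $A$ if and only if its entire support --- the set of simple roots occurring in its $\Delta_{Y,S}$-expansion --- is contained in $\Gamma_A$; this holds because $A$ is spanned by a subset of the linearly independent simple roots, so membership forces all coefficients outside that subset to vanish. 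Writing $A_\gamma$ for the element of $S$ attached to $\gamma$ as in the definition of the partial order, the adapted property gives $A_\beta\in S$ for each simple $\beta$.

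With this in hand the order comparison is immediate: since $\beta$ lies in the support of $\alpha$, every $A\in S$ responsible for $\alpha$ also accounts for $\beta$, which forces the inclusion relating $A_\alpha$ and $A_\beta$ in the direction giving $\alpha\ge\beta$. When the inclusion is strict we obtain $\alpha>\beta$ and conclude by Proposition~\ref{pr:Moci-regular-functions}. When $A_\alpha=A_\beta$, both $\alpha$ and $\beta$ are \emph{new} at the common level $A_\alpha$, i.e.\ they lie in $A_\alpha$ but in no strictly smaller element of $S$; the defining inequalities of the chart $V_{Y,S}=Y^\circ\times\widetilde{W}_S^{loc}$ then force both $e^\alpha-1$ and $e^\beta-1$ to be local units, so the ratio $\dfrac{e^\beta-1}{e^\alpha-1}$ is not merely regular but invertible there. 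In either case the function is regular on $V_{Y,S}$.

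The main obstacle is bookkeeping rather than conceptual: one must align the direction of the partial order used in Proposition~\ref{pr:Moci-regular-functions} --- phrased via maximal subspaces on which a root vanishes --- with the support/subgraph description inherited from the De Concini--Procesi side, since the two are related by passing to annihilators in $\fh_Y$. Tracking this duality correctly, together with the observation that the equality case $A_\alpha=A_\beta$ produces a unit rather than a genuine pole, are the only points requiring attention; everything else is a formal consequence of the adapted-basis combinatorics and the already-proven Proposition~\ref{pr:Moci-regular-functions}.
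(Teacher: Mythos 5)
Your proposal is correct and follows essentially the same route as the paper, whose entire proof is the one-line observation that for $\alpha\in\Phi_Y$ and $\beta\in\Delta_{Y,S}$ the condition $\alpha>\beta$ is equivalent to $\beta$ appearing in the $\Delta_{Y,S}$-support of $\alpha$, after which Proposition~\ref{pr:Moci-regular-functions} applies. Your additional care with the borderline case $A_\alpha=A_\beta$ (where the ratio is in fact a unit) is a sound refinement of a point the paper glosses over by treating the order as non-strict, but it is not a different argument.
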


\begin{proof}
  Indeed, for $\alpha\in\Phi_Y$ and $\beta\in\Delta_{Y,S}$ we have $\alpha>\beta$ if and only if $\alpha$ contains $\beta$ in its decomposition with respect to $\Delta_{Y,S}$.  
\end{proof}

\subsection{Charts on $X_{\Phi}$} 
Let $ \mathcal{S} $ be a quadruple $(\Delta,I,Y,S)$ where $\Delta$ is a system of simple roots in $\Phi$, $I\subset\Delta$, $Y$ is a layer of the root arrangement determined by $\Phi_I$ in $T_I$ (i.e. an irreducible component of intersection of all the subtori $T_{\alpha,I}\subset T_I$ for all $\alpha\in \Phi_I$), and $S$ is a nested set in the root arrangement of hyperplanes in $\fh_Y\subset\fh_I$, where $\fh_Y$ is the Cartan subalgebra of the semisimple part of the Lie algebra centralizer in $\fg_I$ of a generic point in $Y$.

From Proposition~\ref{pr:DCG-over-t-orbit} we have the open covering $X_{\Phi} = \bigcup V_{ \mS }$ where 
$$ V_{ \mS }:=(V_{S}\times E_{\Delta,I}^{loc})\times_{\overline{T}}U_{\Delta,I}^{loc}.
$$

The following is clear from the above and Proposition~\ref{pr:Moci-regular-functions}:

\begin{prop}
The following functions are regular on $V_{ \mS }$:
\begin{itemize}
    \item $u_\alpha:=e^{\alpha}$ for $\alpha\in \Phi^+$;
    \item $v_\beta:=\dfrac{1}{u_\beta-1}$ for $\beta\not\in \Phi_Y$;
    \item $v_{\alpha,\beta}:=\dfrac{{u_\alpha}-1}{u_{\beta}-1}$ for $\alpha\in\Phi_Y, \beta\not\in\Phi_Y$ or $\alpha,\beta\in\Phi_Y$ such that $\alpha <\beta$ under the partial order determined by $S$.
\end{itemize}
\end{prop}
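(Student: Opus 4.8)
The plan is to reduce every assertion to the product structure of the chart together with the two regularity facts already established for its factors. By Proposition~\ref{pr:DCG-over-t-orbit} the chart decomposes as $V_{\mS}\cong V_{Y,S}\times E_{\Delta,I}^{loc}$, where $V_{Y,S}=Y^\circ\times\widetilde{W}_S^{loc}$ is a chart on $X_{\Phi_I}^\circ$ and $E_{\Delta,I}^{loc}$ is the transversal vector-space factor; moreover the structure morphism restricts to a blow-down $V_{\mS}\to U_{\Delta,I}^{loc}\subset\oT$. Throughout I would use the nested chain of closed subsystems $\Phi_Y\subset\Phi_I\subset\Phi$ to sort each $\alpha\in\Phi^+$ into exactly one of three classes: (a) $\alpha\notin\Phi_I$, for which $e^\alpha$ vanishes on the orbit $O_{\Delta,I}$ (since $\alpha$ then has a positive coefficient on some simple root $\alpha_j$ with $y_j=e^{\alpha_j}$ vanishing there); (b) $\alpha\in\Phi_I\setminus\Phi_Y$, for which $e^\alpha\ne 1$ on $Y^\circ$; and (c) $\alpha\in\Phi_Y$, for which $e^\alpha-1$ vanishes along $Y$ and is resolved by the blow-up. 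All three items are read off from this trichotomy, using Proposition~\ref{pr:restriction-to-Levi} to guarantee that the arrangement on $U_{\Delta,I}^{loc}$ is exactly the one coming from $\Phi_I$.

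For the functions $u_\alpha=e^\alpha$ I would invoke pure toric geometry. The affine chart $U_\Delta$ is $\Spec$ of the monoid algebra of the dual cone $\sigma_\Delta^\vee\cap R$, whose Hilbert basis is the set of simple roots of $\Delta$; hence every $\alpha\in\Phi^+$, being a nonnegative integral combination of simple roots, gives $e^\alpha$ in this monoid algebra. Thus $u_\alpha$ is regular on $U_\Delta$, hence on $U_{\Delta,I}^{loc}$, and its pullback under the blow-down is regular on $V_{\mS}$.

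For $v_\beta=1/(u_\beta-1)$ with $\beta\notin\Phi_Y$ I would show that $e^\beta-1$ is a unit on $V_{\mS}$, treating classes (a) and (b) separately. In class (a), $e^\beta$ vanishes on $O_{\Delta,I}$, so $e^\beta-1$ equals $-1$ plus a function vanishing on the orbit and is therefore invertible on the formal neighborhood $U_{\Delta,I}^{loc}$. In class (b), $Y^\circ$ is the complement in $Y$ of all smaller layers, so no root outside $\Phi_Y$ satisfies $e^\beta=1$ on $Y^\circ$; thus $e^\beta-1$ is nonvanishing on the central fibre of $V_Y=Y^\circ\times\widetilde{M}_{\Phi_Y}^{loc}$, and a function nonvanishing on the central fibre of a formal neighborhood is invertible. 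Either way $v_\beta$ is regular.

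Finally, for $v_{\alpha,\beta}=(u_\alpha-1)/(u_\beta-1)$ I would split into the two cases of the statement. When $\alpha\in\Phi_Y$ and $\beta\notin\Phi_Y$ I write $v_{\alpha,\beta}=(u_\alpha-1)\,v_\beta$, a product of a regular function (from the $u$-claim) and a unit (from the $v_\beta$-claim). When $\alpha,\beta\in\Phi_Y$ with $\alpha<\beta$, the quotient $(e^\alpha-1)/(e^\beta-1)$ is pulled back from the factor $X_{\Phi_I}^\circ$, where on $V_{Y,S}$ it is precisely the function shown regular in Proposition~\ref{pr:Moci-regular-functions}: the numerator carries the smaller root and the denominator the larger one, matching the hypothesis $\alpha<\beta$ there. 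I expect the only genuinely delicate point to be the bookkeeping in class (b) and in this last case, namely verifying that these quotients are honestly pulled back from $X_{\Phi_I}^\circ$ and are untouched both by the transversal directions $E_{\Delta,I}^{loc}$ and by the part of the blow-up along roots of $\Phi_I\setminus\Phi_Y$, so that Proposition~\ref{pr:Moci-regular-functions} (itself a consequence of Proposition~\ref{pr:DCP-reg-functions}) applies verbatim. Everything else is formal.
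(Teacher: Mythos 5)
Your proposal is correct and follows exactly the route the paper intends: the paper states this proposition without proof, declaring it ``clear from the above and Proposition~\ref{pr:Moci-regular-functions},'' and your argument is precisely the fleshed-out version of that claim --- the trichotomy $\Phi_Y\subset\Phi_I\subset\Phi$, the toric regularity of $e^\alpha$ on $U_\Delta$, invertibility of $e^\beta-1$ on the formal neighborhoods for $\beta\notin\Phi_Y$, and the reduction of the last case to Proposition~\ref{pr:Moci-regular-functions} via the product structure of $V_{\mS}$. The one point worth making explicit (which you flag as ``delicate'') is that for $\beta\in\Phi_I\setminus\Phi_Y$ the nonvanishing of $e^\beta-1$ on all of $Y^\circ$ follows because any zero would lie in a strictly smaller layer, hence outside $Y^\circ$; with that observation the argument is complete.
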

    

\subsection{Specialization to fibers of $X_\Phi\to \overline{T}$}

Let $C\in T$ and $\Phi(C)\subset\Phi$ be the root system for the centralizer $\mathfrak{z}_\fg(C)$.

\begin{prop}
    We have $X_\Phi^\circ\times_{T}\{C\}=M_{\Phi(C)}$. Under this identification, we have $V_{\mS}\times_{T}\{C\}=W_S$ and the specialization of $v_{\alpha,\beta}$ to $W_S$ is $w_{\alpha,\beta}$.
\end{prop}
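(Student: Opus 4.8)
The plan is to read off both statements from the local product structure of Proposition~\ref{pr:formal-neighborhood-of-layer}, restricting it to the single point $C$. Since $C\in T$ lies in the open torus, I work inside $X_\Phi^\circ$ and take $I=\Delta$, so that the quadruple $\mS=(\Delta,\Delta,Y,S)$ reduces to the pair $(Y,S)$ and $V_{\mS}=V_{Y,S}$.

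First I would single out the layer attached to $C$: let $Y$ be the connected component of $\bigcap_{\alpha\in\Phi(C)}T_\alpha$ containing $C$. A root $\alpha$ vanishes identically on $Y$ precisely when $C\in T_\alpha$, so $\Phi_Y=\Phi(C)$; and if a strictly smaller layer $Y'\subsetneq Y$ contained $C$, then $\Phi(C)\supseteq\Phi_{Y'}\supsetneq\Phi_Y=\Phi(C)$, a contradiction. Hence $C\in Y^\circ$, corresponding to the point $(C,0)$ of $U_Y^{loc}=Y^\circ\times E_Y$ with $E_Y=\fh_Y^{loc}$.

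Next I would restrict the product decomposition to the fibre over $C$. By Proposition~\ref{pr:formal-neighborhood-of-layer} we have $V_Y=Y^\circ\times\widetilde{M}_{\Phi_Y}^{loc}$, and the projection $V_Y\to U_Y^{loc}$ is $\mathrm{id}_{Y^\circ}\times p$, where $p\colon\widetilde{M}_{\Phi_Y}^{loc}\to\fh_Y^{loc}$ is the blow-down. Since $C\in Y^\circ\subset U_Y^{loc}$, the whole fibre of $X_\Phi^\circ\to T$ over $C$ is contained in $V_Y=X_\Phi^\circ\times_T U_Y^{loc}$, and therefore equals the fibre of $\mathrm{id}_{Y^\circ}\times p$ over $(C,0)$, namely $\{C\}\times p^{-1}(0)$. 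Since $p^{-1}(0)=M_{\Phi_Y}$ is the central fibre of $\widetilde{M}_{\Phi_Y}\to\fh_Y$, this yields $X_\Phi^\circ\times_T\{C\}=M_{\Phi_Y}=M_{\Phi(C)}$. Intersecting with the subchart $V_{Y,S}=Y^\circ\times\widetilde{W}_S^{loc}$, and using that $\widetilde{W}_S^{loc}$ restricts to $W_S$ on the central fibre (its definition being that of Definition~\ref{def:charts-on-mphi} transported to $\widetilde{M}_{\Phi_Y}^{loc}$), gives $V_{\mS}\times_T\{C\}=\{C\}\times W_S=W_S$.

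Finally, for the functions I would invoke the factorization already recorded in the proof of Proposition~\ref{pr:Moci-regular-functions}: under $V_{Y,S}=Y^\circ\times\widetilde{W}_S^{loc}$ the function $v_{\alpha,\beta}=\dfrac{e^{\alpha}-1}{e^{\beta}-1}$ is the pullback of $\dfrac{\alpha}{\beta}=w_{\alpha,\beta}$ from the second factor, multiplied by the pullback of a regular function on $U_Y^{loc}$ that is identically $1$ along $Y$. Restricting to the fibre over $C$ fixes the $Y^\circ$-coordinate at $C\in Y$, where that extra factor equals $1$; hence $v_{\alpha,\beta}$ specializes to $w_{\alpha,\beta}$, as claimed. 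The content is essentially bookkeeping rather than new geometry, since the substantive input---that $X_\Phi^\circ$ is, near a stratum, the product of that stratum with a De~Concini--Procesi model---is exactly Proposition~\ref{pr:formal-neighborhood-of-layer}. The only steps requiring care are matching $C$ to the correct layer $Y$ (verifying $\Phi_Y=\Phi(C)$ and $C\in Y^\circ$) and checking that passing to the fibre over $C$ commutes with the product decomposition; I do not expect either to pose a real obstacle.
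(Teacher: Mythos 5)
Your proposal is correct and follows essentially the same route as the paper's (much terser) proof: identify the unique layer $Y$ with $C\in Y^\circ$ and $\Phi_Y=\Phi(C)$, apply the product decomposition $V_Y=Y^\circ\times\widetilde{M}_{\Phi_Y}^{loc}$ to compute the fiber over $C$, and reuse the factorization of $v_{\alpha,\beta}$ from the regularity argument to see that the extra factor equals $1$ along $Y$. The details you supply (uniqueness of the layer, compatibility of the fiber with the product structure) are exactly the ones the paper leaves implicit.
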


\begin{proof} There is a unique $Y^\circ$ containing $C$. Then $\fz_\fg(C)=\fz_Y$ and $\Phi(C)=\Phi_Y$. By Proposition~\ref{pr:formal-neighborhood-of-layer}, we have $X_\Phi^\circ\times_{T}U_Y^{loc}=Y^\circ\times \widetilde{M}^{loc}_{\Phi_Y}$, so $X_\Phi^\circ\times_{T}\{C\}=M_{\Phi_Y}=M_{\Phi(C)}$. The proof of the second statement is similar to that of Proposition~\ref{pr:Moci-regular-functions}. 

\end{proof}

Similarly, we have the following description of fibers of  $X_\Phi\to\overline{T}$. Suppose $y\in \overline{T}$ and let $O_{\Delta,I}$ be the $T$-orbit containing $y$. Then $y$ can be regarded as an element of the torus $T_I\subset L_I$. 

\begin{prop}\label{pr:gaudin-of-centralizer}
    Let $\Phi_Y$ be the root system of the centralizer of $y$ in $L_I$ and $S$ is a maximal nested set in the root arrangement of hyperplanes in $\fh_Y\subset\fh_I$. We have $X_\Phi\times_{\overline{T}}\{y\}=M_{\Phi_Y}$. Moreover, under this identification we have $V_ \mS \times_{\overline{T}}\{y\}=W_{S}$ and the specialization of $v_{\alpha,\beta}$ to $V_{ \mS }\times_{\overline{T}}\{y\}=W_{S}$ is $w_{\alpha,\beta}$.
\end{prop}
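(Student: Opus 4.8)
The plan is to reduce the statement for the fiber of $X_\Phi\to\overline{T}$ over an arbitrary point $y\in\overline{T}$ to the already-established case of the fiber of $X_\Phi^\circ\to T$ over a point of the open torus, via the local product structure established in Proposition~\ref{pr:DCG-over-t-orbit}. The key observation is that the present proposition concerns the geometry purely in a formal neighborhood of the $T$-orbit $O_{\Delta,I}$ containing $y$, and all three assertions (the identification of the fiber with $M_{\Phi_Y}$, the identification of the chart intersection with $W_S$, and the specialization of $v_{\alpha,\beta}$ to $w_{\alpha,\beta}$) are compatible with restriction to open subsets.

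First I would invoke the isomorphism $U_{\Delta,I}^{loc}\simeq T_I\times E_{\Delta,I}^{loc}$ and Proposition~\ref{pr:DCG-over-t-orbit}, which gives $X_{\Delta,I}^{loc}= X_{\Phi_I}^\circ\times E_{\Delta,I}^{loc}$. Since $y\in O_{\Delta,I}$ corresponds to a point of $T_I$ with vanishing $E_{\Delta,I}^{loc}$-coordinate, taking the fiber of $X_\Phi\to\overline{T}$ over $y$ amounts to taking the fiber of $X_{\Phi_I}^\circ\to T_I$ over the corresponding point of $T_I$. Now I regard $y$ as an element of the smaller torus $T_I\subset L_I$, and the root system governing its centralizer in $L_I$ is exactly $\Phi_Y$ as defined in the statement. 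This reduces the first two claims precisely to the previous proposition applied to the root system $\Phi_I$ in place of $\Phi$: the fiber $X_{\Phi_I}^\circ\times_{T_I}\{y\}$ is $M_{\Phi(y)}=M_{\Phi_Y}$, and under this identification $V_{\mS}\times_{\overline{T}}\{y\}=W_S$.

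For the last assertion, on the specialization of $v_{\alpha,\beta}$, I would argue exactly as in the proof of Proposition~\ref{pr:Moci-regular-functions}. Under the local product decomposition $V_{\mS}=(Y^\circ\times\widetilde{W}_S^{loc})\times E_{\Delta,I}^{loc}$, the function $v_{\alpha,\beta}=\dfrac{u_\alpha-1}{u_\beta-1}$ is a pullback of $\dfrac{\beta}{\alpha}=w_{\alpha,\beta}$ on the $\widetilde{W}_S^{loc}$ factor, multiplied by a function regular on $U_Y^{loc}$ that restricts to $1$ along $Y$; restricting to the fiber over $y$ kills the $E_{\Delta,I}^{loc}$ and $Y^\circ$-directions and leaves precisely $w_{\alpha,\beta}$ on $W_S$.

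The main obstacle I anticipate is purely bookkeeping rather than conceptual: one must check that the partial order on $\Phi_Y$ (and hence the domain of definition of each $v_{\alpha,\beta}$) computed inside the toric arrangement for $\Phi$ agrees with the partial order on the linear hyperplane arrangement in $\fh_Y$ used to build $W_S$, so that $v_{\alpha,\beta}$ really does specialize to the function $w_{\alpha,\beta}$ of Proposition~\ref{pr:DCP-reg-functions} and not to some other ratio. This is exactly the content already checked in Corollary~\ref{co:alpha-contains-beta} and the linearization $e^\gamma-1\mapsto\gamma$ near the identity of $T_I$, so once the reduction to $\Phi_I$ is in place the matching of orders is immediate and the verification goes through as in Proposition~\ref{pr:Moci-regular-functions}.
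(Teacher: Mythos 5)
Your proposal is correct and follows essentially the same route as the paper: the authors' one-line proof reduces to the preceding proposition via the transversality of the toric arrangement to the orbit $O_{\Delta,I}$ (Proposition~\ref{pr:restriction-to-Levi}), which is exactly the local product decomposition $X_{\Delta,I}^{loc}\simeq X_{\Phi_I}^\circ\times E_{\Delta,I}^{loc}$ you invoke. Your additional remarks on the matching of partial orders and the specialization of $v_{\alpha,\beta}$ just make explicit what the paper delegates to the proofs of Propositions~\ref{pr:Moci-regular-functions} and the previous proposition.
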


\begin{proof}
 The proof is entirely similar to the that of the previous proposition since the arrangement of subtori is transversal to the $T$-orbit $O_{\Delta,I}$ in $\overline{T}$, by Proposition~\ref{pr:restriction-to-Levi}.     
\end{proof}

\section{Commutative subspaces in the holonomy Lie algebra}

\subsection{$X_\Phi$ as the parameter space} 
Our goal is to extend the map
$$\psi: T^{reg} \to Gr(\rk \fg, \dim (\ft_{\Phi}^{trig})^{1})$$
to the charts $V_{ \mS } \subset X_{\Phi}$.
Let $\Delta_0$ be the set of simple roots in $\Phi^+$ and $\mS = (w(\Delta_0),I,Y,S)$ for some $w \in W$. 
Let $x \in V_\mS$. 
We choose a basis of $\fh$: \\
1) $b_{k+1}, \ldots, b_n$ -- the corresponding elements to the adapted basis  for a nested set $S$ in $\Phi_Y$;  \\
2) $b_{l+1}, \ldots, b_k$ is its orthogonal complement in $\spann \langle \Phi_I \rangle$; \\
3) $b_1, \ldots, b_l$  basis of $\spann \langle \Phi \setminus \Phi_{I} \rangle$ orthogonal to 
$$b_{l+1}, \ldots, b_k, b_{k+1}, \ldots, b_{n}.$$  
Let $\beta_i, i = k+1, \ldots, n$ the corresponding to $b_{k+1}, \ldots, b_n$ elements of $\fh^*$.

Then the following holds:

\begin{lem}\label{le:renormalized_hamiltonians} The following Hamiltonians are well defined in the chart $V_ \mS$:

$$BH_i(x) =  \delta(b_i) - \frac{1}{2}\sum_{\alpha \in \Phi^+\setminus \Phi_I^+} (u_\alpha(x) +1) v_\alpha(x) (\alpha, b_i) t_{\alpha} 
= \tau_w(b_i) - \sum_{\alpha \in \Phi^+ \setminus \Phi_I^+} u_\alpha(x) v_\alpha(x) (\alpha, b_i) t_{\alpha}$$
for $i=1,\ldots,l$, 

$$BH_i(x) =  \delta(b_i) - \frac{1}{2}\sum_{\alpha \in \Phi^+\setminus\Phi_Y^+} (u_\alpha(x) +1) v_\alpha(x) (\alpha, b_i) t_{\alpha} 
= \tau_w(b_i) - \sum_{\alpha \in \Phi^+ \setminus\Phi_Y^+} u_\alpha(x) v_\alpha(x) (\alpha, b_i) t_{\alpha}$$
for $i=l+1,\ldots,k$, and
\begin{eqnarray*}
BH_i(x) = \delta(b_i) (u_{\beta_i}(x)-1)
 - \frac{1}{2}\sum_{\alpha \in \Phi_Y^+}  (u_{\alpha}(x) +1)v_{\beta_i,\alpha}(x)(\alpha, b_i)  t_{\alpha} = \\ =\tau_w(b_i) (u_{\beta_i}(x)-1)
 - \sum_{\alpha \in \Phi_Y^+}  u_{\alpha}(x) v_{\beta_i,\alpha}(x)(\alpha, b_i)  t_{\alpha}
 \end{eqnarray*}
for $i=k+1,\ldots,n$.
\end{lem}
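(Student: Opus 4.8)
The plan is to exhibit the three families $BH_i(x)$ as regular renormalizations of the standard Bethe Hamiltonians $BH(C,b_i)$ attached to the adapted basis $b_1,\dots,b_n$ of $\fh$, and then to read off their regularity on $V_{\mS}$ directly from the list of regular functions established in the preceding proposition. First I would dispose of the equivalence of the two displayed forms of each $BH_i(x)$: substituting $\delta(h)=\tau(h)-\tfrac12\sum_{\alpha\in\Phi^+}\alpha(h)t_\alpha$ and using the elementary identities $u_\alpha v_\alpha-\tfrac12(u_\alpha+1)v_\alpha=\tfrac12$, together with $(u_{\beta_i}-1)v_\alpha=v_{\beta_i,\alpha}$ and $\tfrac12(u_\alpha-1)v_{\beta_i,\alpha}=\tfrac12(u_{\beta_i}-1)$, the passage from the $\delta$-form to the $\tau_w$-form amounts to collecting all $x$-independent Lie-algebra terms into $\tau_w(b_i)$, where $\tau_w$ is the $W$-twist of $\tau$ attached to the simple system $w(\Delta_0)$ of the chart as in Proposition~\ref{tl2}. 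This is a term-by-term computation, so it suffices to prove that either form is regular on $V_{\mS}$.

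The regularity of the coefficient functions comes from the orthogonality built into the adapted basis combined with the regular functions $u_\alpha=e^\alpha$, $v_\beta=\tfrac{1}{u_\beta-1}$ (for $\beta\notin\Phi_Y$) and $v_{\alpha,\beta}$. For $i\le l$ the vector $b_i$ is orthogonal to $\spann\langle\Phi_I\rangle$, hence $(\alpha,b_i)=0$ for $\alpha\in\Phi_I$ and the sum collapses to $\alpha\in\Phi^+\setminus\Phi_I^+$; since $\Phi_Y\subset\Phi_I$, every such $\alpha$ lies outside $\Phi_Y$, so $v_\alpha$ is regular and each coefficient $u_\alpha v_\alpha(\alpha,b_i)$ is a product of regular functions. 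For $l<i\le k$ the vector $b_i$ lies in $\spann\langle\Phi_I\rangle$ but is orthogonal to $\fh_Y$, so $(\alpha,b_i)=0$ for $\alpha\in\Phi_Y$, the sum runs over $\alpha\in\Phi^+\setminus\Phi_Y^+$, and $v_\alpha$ is again regular there. Thus the first two families are manifestly well defined, the only input being the list of regular functions, which itself rests on Corollary~\ref{co:alpha-contains-beta} and the chart decomposition of Propositions~\ref{pr:formal-neighborhood-of-layer} and~\ref{pr:DCG-over-t-orbit}.

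The essential case is $k<i\le n$, where $b_i\in\fh_Y$ and the summation runs over $\alpha\in\Phi_Y^+$ --- precisely the roots for which $u_\alpha-1$ vanishes on the layer $Y$ and $v_\alpha$ is singular. Here the renormalization by the factor $u_{\beta_i}-1$ is what rescues regularity: it converts $u_\alpha v_\alpha$ into $u_\alpha v_{\beta_i,\alpha}=u_\alpha\frac{u_{\beta_i}-1}{u_\alpha-1}$. The key step is that, by the duality $\beta_j(b_i)=\delta_{ij}$ of the adapted basis, a root $\alpha\in\Phi_Y^+$ contributes (i.e. has $(\alpha,b_i)\ne 0$) only if $\beta_i$ occurs in the expansion of $\alpha$ in the adapted simple roots $\Delta_{Y,S}$, that is only if $\alpha>\beta_i$ in the partial order determined by $S$. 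For such $\alpha$, Corollary~\ref{co:alpha-contains-beta} yields that $v_{\beta_i,\alpha}=\frac{u_{\beta_i}-1}{u_\alpha-1}$ is regular on $V_{\mS}$, while $\tau_w(b_i)(u_{\beta_i}-1)$ is a constant element of $(\ft_\Phi^{trig})^{1}$ times the regular function $u_{\beta_i}-1$. Hence every coefficient is regular and $BH_i(x)$ extends across the boundary.

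The main obstacle is exactly this third case: one must verify that the \emph{single} scaling factor $u_{\beta_i}-1$ simultaneously clears the poles contributed by every $\alpha\in\Phi_Y^+$ meeting $b_i$, and that it is compatible with the nested-set order so that only the roots $\alpha>\beta_i$, for which Corollary~\ref{co:alpha-contains-beta} applies, actually appear. This is where the adapted structure of $S$ and the reduction $V_{\mS}\cong Y^\circ\times\widetilde{W}_S^{loc}\times E_{\Delta,I}^{loc}$ are used in an essential way, mirroring the rational computation of Proposition~\ref{pr:AFV}. The remaining bookkeeping --- matching the two displayed forms term-by-term and checking that the resulting sections stay linearly independent via their projection onto $\spann\langle t_{\beta_i}\rangle$ --- is routine.
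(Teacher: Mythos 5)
Your proof is correct and follows exactly the route the paper intends: the lemma is stated there without an explicit proof, being left to follow from the preceding proposition on regular functions on $V_{\mS}$ combined with the orthogonality/duality properties of the basis $b_1,\dots,b_n$, which is precisely the argument you give (with the third family mirroring the rational computation of Proposition~\ref{pr:AFV}). Your identification of the key point --- that $(\alpha,b_i)\neq 0$ for $\alpha\in\Phi_Y^+$ forces $\alpha$ to contain $\beta_i$ in its $\Delta_{Y,S}$-expansion, so that Corollary~\ref{co:alpha-contains-beta} makes $v_{\beta_i,\alpha}$ regular --- is exactly right, and your remark that it suffices to check regularity of either displayed form is the correct way to handle the elementary algebraic identities relating them.
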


We denote by $Q(x)$ the subspace generated by the elements above. Note that $Q(x)$ is an abelian subspace because it is so for all $x=C\in T^{reg}$ and $T^{reg}$ is open in $V_{\mathcal{S}}$.
For $x=C\in T^{reg}$, the above elements are
$$BH(C, b_1), \ldots, BH(C, b_k);$$
$$ BH(C,b_{k+1}) \cdot (u_{\beta_{k+1}}(C)-1)  , \ldots, BH(C,b_n) \cdot  (u_{\beta_{n}}(C)-1).$$

Hence for $x=C \in T^{reg} \subset V_{\mathcal{S}}$ we have $Q(x)=Q(C)$.

\begin{prop}\label{pr:dimQ}
For any $x \in V_ \mS $, we have $\dim Q(x) = \rk \fg$. 
\end{prop}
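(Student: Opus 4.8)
The plan is to show that the $n$ Bethe Hamiltonians $BH_1(x),\ldots,BH_n(x)$ of Lemma~\ref{le:renormalized_hamiltonians} remain linearly independent for every $x\in V_{\mS}$, which is equivalent to $\dim Q(x)=\rk\fg$ since there are exactly $n=\rk\fg$ of them. Following the strategy already used in the proofs of the two dimension lemmas (for $G(\chi)$ and for $Q(C)$) and in Proposition~\ref{pr:AFV}, the natural approach is to exhibit a linear projection of $(\ft^{trig}_\Phi)^{(1)}$ under which the images of the $BH_i(x)$ are manifestly independent, uniformly in $x$. The idea is that each $BH_i(x)$ has a distinguished \emph{leading term} which survives this projection and whose coefficient is a nowhere-vanishing regular function on $V_{\mS}$.

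Concretely, I would project $(\ft^{trig}_\Phi)^{(1)}$ onto the span of $\langle\tau_w(b_1),\ldots,\tau_w(b_l),\tau_w(b_{l+1}),\ldots,\tau_w(b_k)\rangle\oplus\langle t_{\beta_{k+1}},\ldots,t_{\beta_n}\rangle$, killing all other $t_\alpha$. For $i=1,\ldots,k$ the term $\tau_w(b_i)$ appears with coefficient $1$ in $BH_i(x)$, so these project to $\tau_w(b_i)$. For $i=k+1,\ldots,n$, the Hamiltonian has been multiplied by $(u_{\beta_i}(x)-1)$, which on the chart $V_{\mS}$ vanishes on the central fiber $M_{\Phi_Y}$; hence the $\tau_w(b_i)$-contribution degenerates and instead the relevant leading term comes from the sum $\sum_{\alpha\in\Phi_Y^+}u_\alpha(x)\,v_{\beta_i,\alpha}(x)(\alpha,b_i)\,t_\alpha$. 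Here I would single out $\alpha=\beta_i$: since $\beta_i\in\Delta_{Y,S}$ is a simple root of the adapted system, the function $v_{\beta_i,\beta_i}=1$ and $(\beta_i,b_i)\ne0$ by the duality of the bases, whereas for the other simple roots $\beta_j$ ($j\ne i$) in $\Delta_{Y,S}$ the coefficient of $t_{\beta_j}$ vanishes because $(\beta_j,b_i)=0$. Thus the image of $BH_i(x)$, $i>k$, is (a nonzero multiple of) $t_{\beta_i}$, exactly as in the Gaudin computation of Proposition~\ref{pr:AFV}.

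The key regularity input is Corollary~\ref{co:alpha-contains-beta} together with Proposition~\ref{pr:Moci-regular-functions}: the functions $u_\alpha v_\alpha$ (for $\alpha\notin\Phi_I^+$ or $\alpha\notin\Phi_Y^+$, as appropriate) and $u_\alpha v_{\beta_i,\alpha}$ (for $\alpha\in\Phi_Y^+$ with $\alpha>\beta_i$, i.e. $\alpha$ containing $\beta_i$) are all regular on $V_{\mS}$, which is precisely what guarantees the Hamiltonians are well-defined there, and simultaneously that the off-diagonal $t_{\beta_j}$ coefficients behave correctly. The upshot is a matrix (with respect to the chosen basis of the image of the projection) that is block lower-triangular with diagonal entries equal to $1$ (for $i\le k$) and to nonvanishing functions times $(\beta_i,b_i)$ (for $i>k$), hence of full rank $n$ at every point. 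Consequently the $BH_i(x)$ themselves are linearly independent and $\dim Q(x)=\rk\fg$.

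The main obstacle I anticipate is bookkeeping rather than conceptual: one must verify that for $i>k$ no term $t_{\beta_j}$ with $\beta_j$ a simple root of $\Delta_{Y,S}$, $j\ne i$, contaminates the image, and that the $\beta_i$-coefficient is genuinely a nonvanishing \emph{regular} function on all of $V_{\mS}$ (not merely on $T^{reg}$). This requires carefully using that $\beta_i$ is the minimal element of its own order-filter, so $v_{\beta_i,\beta_i}=1$ is the only surviving leading contribution, while all $\alpha\in\Phi_Y^+$ strictly greater than $\beta_i$ contribute to $t_\alpha$ with $\alpha$ not among the projection targets, and while the three ranges $i\le l$, $l<i\le k$, $k<i\le n$ interact correctly through the orthogonality of the chosen basis $b_1,\ldots,b_n$. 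Once the triangular structure is set up, positivity of the rank is immediate, so the computation, though somewhat lengthy, presents no real difficulty.
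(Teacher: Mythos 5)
Your proof is correct and follows essentially the same route as the paper: the authors also split the generators into the $\tau$-block and the $\Phi_Y$-block and argue by a leading-term projection, except that they project onto $\delta(\fh)$ to separate the first $k$ Hamiltonians and then invoke Proposition~\ref{pr:AFV} for the remaining $n-k$, whose proof is precisely the projection onto the $t_{\beta_i}$ that you carry out inline. The only difference is that you merge the two projections into one and make the triangular matrix explicit rather than citing the Gaudin independence result.
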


\begin{proof}
Let us fix an element from $x \in V_ \mS $. 
For $i = 1, \ldots, k$ we have 
\begin{equation}\label{eq:1st-type-generators} BH_i(x) = \delta(b_i) - \frac{1}{2}\sum_{\alpha \in \Phi^+ \setminus \Phi_Y^+} (u_\alpha(x)+1) v_\alpha(x) (\alpha, b_i) t_{\alpha}.\end{equation}
For $i \in \{k+1, \ldots, n \}$ where 
we have
\begin{equation}\label{eq:3d-type-generators}
    BH_i(x) =  \sum_{\alpha \in \Phi_Y^+} v_{\beta_i \alpha}(x) \alpha(b_i)  t_{\alpha}.
\end{equation} 

Let  $y\in\overline{T}$ be the image of $x$. Then by Proposition~\ref{pr:gaudin-of-centralizer}, the preimage of $y$ in $V_ \mS $ gets identified with $W_{S}\subset M_{\Phi_Y}$. Let $z$ be the image of $x$ under this identification. Then, according to Proposition~\ref{pr:gaudin-of-centralizer}, the latter elements are
$$ BH_i(x) =  \sum_{\alpha \in \Phi_Y^+} w_{\beta_i \alpha}(z) \alpha(b_i)  t_{\alpha},$$
i.e. Gaudin elements in $\mathfrak{t}_{\Phi_Y}$ corresponding to $z$.

Consider the projection of $(\mathfrak{t}_\Phi^{trig})^{1}$ to $\delta(\fh)$ annihilating all the $t_\alpha$. This projection takes the first 
set of generators to linearly independent elements of $\delta(\fh)$. The third set of generators is annihilated by that projection and is linear independent by Proposition~\ref{pr:AFV}. So altogether, $BH_i(x)$ span the subspace of the dimension $n$ in $(\mathfrak{t}_\Phi^{trig})^{1}$, and this completes the proof.

\end{proof}

As a consequence we have a well-defined regular map $\psi:X_\Phi\to Gr(\rk\fg, \dim(\ft_\Phi^{trig})^1)$. Moreover, we have the following description of the subspaces $Q(x)$ for boundary points $x\in X_\Phi$. 

\begin{cor}
    Let $x\in X_\Phi$ be such that $y\in O_{\Delta,I}$ (with $\Delta=w(\Delta_0)$) is its image in $\overline{T}$, $Y\subset T_I$ is the layer such that $y\in Y^{\circ}$ and $z\in M_{\Phi_Y}$ be the element corresponding to $x$ under the identification of $X_\Phi\times_{\overline{T}}\{y\}$ with $M_{\Phi_Y}$. Then $Q(x)$ is the direct sum of the following three subspaces:
    \begin{itemize}
        \item $\tau(\fh_I^\perp)=\spann\{ \tau_w(h)\ |\ h\in\fh_I^{\perp}\}$;
        \item $\spann\{ \tau_w(h) - \sum_{\alpha \in \Phi_I^+ \setminus\Phi_Y^+} u_\alpha(x) v_\alpha(x) (\alpha, h) t_{\alpha}\ |\ h\in\fh_Y^{\perp}\cap\fh_I\}$, a subspace in $\ft_{\Phi_I}^{trig, 1}\subset\ft_\Phi^{trig,1}$;
        \item $\spann\{ \sum_{\alpha \in \Phi_Y^+}   w_{h,\alpha}(z)(\alpha, h)  t_{\alpha}\ |\ h\in\fh_Y\}$, i.e the Gaudin subspace in $\ft_{\Phi_Y}^{1}\subset\ft_\Phi^{trig, 1}$ corresponding to $z$.
    \end{itemize}
    
\end{cor}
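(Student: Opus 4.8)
The plan is to analyze the limit of the three families of Hamiltonians $BH_i(x)$ from Lemma~\ref{le:renormalized_hamiltonians} as $x$ specializes to the boundary point, reading off exactly which terms survive. The key observation is that the boundary data decomposes $\fh$ into three orthogonal pieces governed by the chain of root subsystems $\Phi_Y\subset\Phi_I\subset\Phi$, and correspondingly the basis $b_1,\dots,b_n$ was chosen so that $b_1,\dots,b_l$ span $\fh_I^\perp$, the vectors $b_{l+1},\dots,b_k$ span $\fh_Y^\perp\cap\fh_I$, and $b_{k+1},\dots,b_n$ span $\fh_Y$. I would treat the three blocks separately, matching them to the three bullet subspaces in the statement.

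First I would handle the top block $i=k+1,\dots,n$. Here the normalized Hamiltonian carries the factor $(u_{\beta_i}(x)-1)$, so the $\delta(b_i)$ and $\tau_w(b_i)$ parts vanish on the central fiber, and what survives is precisely $\sum_{\alpha\in\Phi_Y^+} v_{\beta_i,\alpha}(x)(\alpha,b_i)t_\alpha$, which by Proposition~\ref{pr:gaudin-of-centralizer} specializes to the Gaudin element $\sum_{\alpha\in\Phi_Y^+} w_{\beta_i,\alpha}(z)(\alpha,b_i)t_\alpha$ in $\ft_{\Phi_Y}^1$. This is exactly the third bullet. For the middle and bottom blocks $i=1,\dots,k$, I would start from the uniform expression~\eqref{eq:1st-type-generators}, namely $BH_i(x)=\delta(b_i)-\tfrac12\sum_{\alpha\in\Phi^+\setminus\Phi_Y^+}(u_\alpha(x)+1)v_\alpha(x)(\alpha,b_i)t_\alpha$. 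For $i=1,\dots,l$ we have $b_i\in\fh_I^\perp$, so $(\alpha,b_i)=0$ for all $\alpha\in\Phi_I$; combined with the fact that $\delta(b_i)$ differs from $\tau_w(b_i)$ by a multiple of $\sum_{\alpha}(\alpha,b_i)t_\alpha$, I expect the non-$\Phi_I$ contributions to reorganize into $\tau_w(b_i)$ alone, giving the first bullet $\tau(\fh_I^\perp)$. For $i=l+1,\dots,k$ we have $b_i\in\fh_Y^\perp\cap\fh_I$, so only roots in $\Phi_I^+\setminus\Phi_Y^+$ pair nontrivially and survive with coefficient $u_\alpha(x)v_\alpha(x)$ after converting $\delta$ to $\tau_w$, yielding the second bullet.

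The main obstacle I anticipate is the careful bookkeeping in converting between the $\delta(b_i)$-presentation and the $\tau_w(b_i)$-presentation on the boundary, since the relation $\delta(h)=\tau(h)-\tfrac12\sum_\alpha\alpha(h)t_\alpha$ involves all of $\Phi^+$ whereas the surviving sums run only over subsets; I must check that the extra $t_\alpha$ terms either vanish by orthogonality $(\alpha,b_i)=0$ or are absorbed correctly, and that the specialization of each $u_\alpha(x)v_\alpha(x)$ to the fiber matches the stated coefficient. The directness of the sum then follows from the three-step argument already used in the proof of Proposition~\ref{pr:dimQ}: projecting to $\delta(\fh)$ separates the first two bullets (living in $\tau(\fh_I^\perp)$ and the trigonometric part of $\ft_{\Phi_I}^{trig,1}$) from the third (a Gaudin subspace annihilated by that projection and of full dimension by Proposition~\ref{pr:AFV}). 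Since we already know $\dim Q(x)=\rk\fg$, exhibiting these three mutually transversal spans of the correct total dimension completes the identification.
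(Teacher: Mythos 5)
Your overall strategy is the right one and is essentially what the paper does implicitly: specialize the three blocks of renormalized Hamiltonians from Lemma~\ref{le:renormalized_hamiltonians} at the boundary point, identify the third block with Gaudin elements via Proposition~\ref{pr:gaudin-of-centralizer}, and get directness of the sum from the projection-to-$\delta(\fh)$ argument already used in Proposition~\ref{pr:dimQ}. Your treatment of the third block ($i=k+1,\dots,n$) is correct.

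However, there is a genuine gap in your justification of the first two bullets: you attribute the disappearance of the terms $u_\alpha(x)v_\alpha(x)(\alpha,b_i)t_\alpha$ for $\alpha\in\Phi^+\setminus\Phi_I^+$ to orthogonality, i.e.\ to $(\alpha,b_i)=0$. That is false in general. For $b_i\in\fh_I^\perp$ a positive root $\alpha\notin\Phi_I$ need not pair to zero with $b_i$ (already in type $A_2$ with $I=\{\alpha_1\}$, the roots $\alpha_2$ and $\alpha_1+\alpha_2$ pair nontrivially with $\fh_I^\perp$), and for $b_i\in\fh_Y^\perp\cap\fh_I$ the pairing with roots outside $\Phi_I$ is likewise generally nonzero. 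The correct mechanism is the vanishing of the coefficient functions themselves: any $\alpha\in\Phi^+\setminus\Phi_I^+$ contains some simple root $\alpha_j$ with $j\notin I$ with positive coefficient, and $y_j=0$ on the orbit $O_{\Delta,I}$, so $u_\alpha(x)=e^{\alpha}(x)=0$ and hence $u_\alpha(x)v_\alpha(x)=\dfrac{u_\alpha(x)}{u_\alpha(x)-1}=0$ at $x$. This is what reduces $BH_i(x)$ to $\tau_w(b_i)$ for $i\le l$ and restricts the sum to $\Phi_I^+\setminus\Phi_Y^+$ for $l<i\le k$; no ``reorganization'' between the $\delta$- and $\tau_w$-presentations is involved, since Lemma~\ref{le:renormalized_hamiltonians} already records both forms. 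You do flag the need to ``check that the specialization of each $u_\alpha(x)v_\alpha(x)$ matches,'' so the gap is recoverable, but as written the argument for the first and second bullets rests on a claim that fails. The directness argument at the end is fine.
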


\subsection{Injectivity of the map $\psi$.} We are going to show that the fibers of the above map $\psi$ are always finite and, moreover, it is injective in all classical cases. 

Let $\Gamma_Y\subset T_I$ be the finite group dual to $(R_{\Phi_I}\cap\mathbb{Q}\Phi_Y)/R_Y$, that is $\Gamma_Y=\{y\in T_I\ |\ \xi(y)=1\ \forall \xi\in R_Y^\perp\oplus R_Y\}$.

\begin{prop}
    The subspace $Q(x)$ uniquely determines the following data:
    \begin{itemize}
        \item  $\Phi_Y$;
        \item the image $y\in\overline{T}$ of $x\in X_\Phi$ up to $\Gamma_Y$;
        \item  $z\in M_{\Phi_Y}$. 
    \end{itemize}
\end{prop}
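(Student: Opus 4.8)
The plan is to read off the three invariants one at a time from the intrinsic linear-algebraic structure of $Q(x)\subset(\ft_\Phi^{trig})^1$. Write $(\ft_\Phi^{trig})^1=\tau(\fh)\oplus T$ with $T=\bigoplus_{\alpha\in\Phi^+}\bc t_\alpha$, and let $\pi_\tau\colon(\ft_\Phi^{trig})^1\to\fh$ be the projection killing every $t_\alpha$ (so $\tau(h)\mapsto h$). Using the description of $Q(x)$ as a direct sum of three subspaces from the preceding corollary, I would first identify the third summand with $Q(x)\cap T$: an element of $Q(x)$ has vanishing $\pi_\tau$-image precisely when the $\tau_w$-parts of its first two components cancel, and since those parts range over $\fh_I^\perp$ and $\fh_Y^\perp\cap\fh_I$ respectively and $\fh_I^\perp\cap\fh_I=0$, they cancel only trivially. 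Hence $\pi_\tau(Q(x))=\fh_Y^\perp$ and $Q(x)\cap T=G(z)$, the Gaudin subspace of $\Phi_Y$ attached to $z$. This already recovers $\fh_Y$, and hence $W:=\spann\Phi_Y=\mathrm{Ann}(\fh_Y^\perp)\subset\fh^*$.

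The crux is to upgrade $W$ to the actual set of roots $\Phi_Y$: since $\Phi_Y$ is a centralizer (pseudo-Levi) subsystem, it can be strictly smaller than its saturation $\Phi\cap W$, so its span alone does not suffice. Here I would prove the key lemma that the support of the Gaudin subspace is full, namely
$$\{\alpha\in\Phi^+ : \exists\, g\in G(z)\ \text{with nonzero}\ t_\alpha\text{-coefficient}\}=\Phi_Y^+$$
for every $z\in M_{\Phi_Y}$. The inclusion $\subseteq$ is immediate from $G(z)\subset\ft_{\Phi_Y}^1$. For $\supseteq$, working in a chart $W_S\subset M_{\Phi_Y}$ with adapted basis $\beta_i$, dual basis $h_i$, and the renormalized generators $H_i(z)=\sum_\alpha\alpha(h_i)\,w_{\beta_i\alpha}(z)\,t_\alpha$ of Proposition~\ref{pr:AFV}, I would show that for each $\alpha\in\Phi_Y^+$ some combination $\sum_i c_i H_i(z)$ has nonzero $t_\alpha$-coefficient, by pairing $\alpha$ with the adapted generator indexing the minimal layer $A_S(\alpha)\in S$, for which $w_{\beta_i\alpha}(z)$ is regular and nonvanishing. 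Granting the lemma, $\Phi_Y^+$ is exactly the support of $Q(x)\cap T$, so $\Phi_Y$ is recovered.

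Once $\Phi_Y$ is known, $z$ follows at once: $G(z)=Q(x)\cap T$ lies in the image of the closed embedding $\overline{\iota}\colon M_{\Phi_Y}\hookrightarrow Gr(\rk\Phi_Y,\ft_{\Phi_Y}^1)$ of \cite{afv2}, so $z$ is its unique preimage. Finally, to recover $y$ up to $\Gamma_Y$, I would extract the transverse coefficients: reducing the $T$-component of $Q(x)$ modulo $\ft_{\Phi_Y}^1$ gives a well-defined linear map $\Theta\colon\fh_Y^\perp\to T/\ft_{\Phi_Y}^1$ whose $t_\alpha$-entry is $-\tfrac{e^\alpha(y)}{e^\alpha(y)-1}(\alpha,\cdot)$ for $\alpha\in\Phi^+\setminus\Phi_Y^+$. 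Reading these entries recovers $e^\alpha(y)$ for every $\alpha\notin W$ (the boundary directions $\alpha\notin\Phi_I$ appearing through a vanishing coefficient), while $e^\alpha(y)=1$ for $\alpha\in\Phi_Y$; the only characters left undetermined are those of the saturation roots $(\Phi\cap W)\setminus\Phi_Y$, which pair trivially with $\fh_Y^\perp$. A direct lattice computation then identifies the resulting ambiguity group with $\Gamma_Y=\{y\in T_I\mid \xi(y)=1\ \forall\xi\in R_Y^\perp\oplus R_Y\}$, completing the proof.

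The main obstacle is the full-support lemma of the second paragraph: it is precisely what separates a pseudo-Levi subsystem from its saturation (for instance the long-root $A_1\times A_1$ inside $B_2$), and it must be verified uniformly across all nested-set charts of $M_{\Phi_Y}$, including the most degenerate boundary strata. The remaining work — the cancellation in Step~1 and the matching of the transverse ambiguity with $\Gamma_Y$ — is routine bookkeeping once the support lemma is established.
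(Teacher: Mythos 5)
Your proposal is correct and follows essentially the same route as the paper's own proof: isolate the kernel of the projection of $Q(x)$ onto the $\tau$-part (equivalently $Q(x)\cap\operatorname{span}\{t_\alpha\}$) to recover the Gaudin subspace and hence $\Phi_Y$ and $z$, then read off $e^\alpha(y)$ for $\alpha\notin\mathbb{Q}\Phi_Y$ from the $t_\alpha$-coefficients of the preimages of $\delta(h)$, $h\perp\Phi_Y$, leaving exactly the $\Gamma_Y$-ambiguity. The one place you go beyond the paper is the full-support lemma for $G(z)$ at boundary points of $M_{\Phi_Y}$: the paper simply asserts that the kernel subspace determines $\Phi_Y$, whereas this genuinely requires knowing that every $\alpha\in\Phi_Y^+$ occurs with nonzero coefficient (since a pseudo-Levi $\Phi_Y$ can be strictly smaller than its saturation $\Phi\cap\mathbb{Q}\Phi_Y$), and your argument via the adapted generator attached to the minimal layer $A_S(\alpha)$ — for which $\alpha(h_i)\neq 0$ and $w_{\beta_i\alpha}$ is a nonvanishing unit on the chart — correctly supplies the missing justification.
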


\begin{proof} Consider the kernel of projection of $Q(x)$ along span of $t_{\alpha}$ to $\delta(\fh)$. According to the proof of Proposition~\ref{pr:dimQ}, it is generated by 
$$ BH_i(x) =  \sum_{\alpha \in \Phi_Y^+} w_{\beta_i \alpha}(z) \alpha(b_i)  t_{\alpha},$$
for all $b_i$ in $\Delta_Y\subset\Phi_Y$. So this kernel space is the Gaudin subalgebra in $\mathfrak{t}_{\Phi_Y}$ corresponding to $z\in M_{\Phi_Y}$. In particular, both $\Phi_Y$ and $z$ are uniquely determined by $Q(x)$.

Next, the preimages of $\delta(h)$ under the above projection $Q(x)\to\delta(\fh)$ are nonzero for $h$ orthogonal to $\Phi_Y$ are uniquely determined up to the kernel of this projection, in particular, they are uniquely determined up to $\ft_{\Phi_Y}$. So the coefficients of such elements at $t_\alpha$ for all $\alpha\not\in\Phi_Y$ are uniquely determined by $Q(x)$. Since those coefficients are fractional-linear expressions of $u_\alpha(x)$, that are nonzero for $\alpha\in\Phi\setminus \Phi\cap\mathbb{Q}\Phi_Y$, this means that the values of $u_{\alpha}(x)\in\mathbb{CP}^1$ for $\alpha\in\Phi\setminus \Phi\cap\mathbb{Q}\Phi_Y$ are uniquely determined by $Q(x)$. Also, since $\Phi_Y$ is determined by $Q(x)$, we have $u_\alpha(x)=1$ for all $\alpha\in\Phi_Y$. Hence we can recover $y \in \overline{T}$ (at least) up to $\Gamma_Y$.

\end{proof}

\begin{cor}
\label{inj}
    Suppose that the root system $\Phi$ is of on of the classical types $A_n,B_n,C_n,D_n$. Then the map $\psi$ is injective, i.e. we have $Q(x_1) \ne Q(x_2)$ for any $x_1\ne x_2 \in X_{\Phi}$.
\end{cor}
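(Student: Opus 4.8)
The plan is to deduce Corollary~\ref{inj} from the preceding Proposition by showing that, in the four classical types, the ambiguity group $\Gamma_Y$ is always trivial. The Proposition already establishes that $Q(x)$ determines $\Phi_Y$, the point $z\in M_{\Phi_Y}$, and the image $y\in\overline{T}$ \emph{up to} the finite group $\Gamma_Y=\{y\in T_I\mid \xi(y)=1\ \forall\xi\in R_Y^\perp\oplus R_Y\}$. Since $x$ is recovered from the triple $(y,Y,z)$ via the identification $X_\Phi\times_{\overline T}\{y\}=M_{\Phi_Y}$ of Proposition~\ref{pr:gaudin-of-centralizer}, once I know $y$ exactly I know $x$ exactly. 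So injectivity reduces to the purely combinatorial claim that $\Gamma_Y$ is trivial for every layer $Y$ in types $A_n,B_n,C_n,D_n$.

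First I would reinterpret $\Gamma_Y$ lattice-theoretically. By definition $\Gamma_Y$ is dual to the finite abelian group $(R_{\Phi_I}\cap\mathbb{Q}\Phi_Y)/R_Y$, where $R_Y=R_{\Phi_Y}$ is the root lattice of $\Phi_Y$ and $R_{\Phi_I}\cap\mathbb{Q}\Phi_Y$ is the \emph{saturation} of $R_Y$ inside the ambient root lattice. Thus $\Gamma_Y$ is trivial precisely when $R_{\Phi_Y}$ is saturated (primitively embedded) in $R_{\Phi_I}$, equivalently when the closed subsystem $\Phi_Y\subset\Phi_I$ spans a \emph{primitive} sublattice, i.e. $R_{\Phi_I}/R_{\Phi_Y}$ is torsion-free. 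By Proposition~\ref{pr:restriction-to-Levi} and the reduction via Proposition~\ref{pr:DCG-over-t-orbit}, it suffices to check this for all closed subsystems $\Phi_Y$ arising as centralizer root systems inside a standard Levi $\Phi_I$; and since every $\Phi_I$ is itself a product of classical (and type-$A$) factors, it is enough to verify primitivity for closed subsystems of irreducible classical root systems.

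The key step, then, is the root-lattice computation type by type. For type $A$ this is immediate: every closed subsystem of $A_m$ is a product of type-$A$ subsystems sitting on disjoint index blocks, and the quotient of $\mathbb{Z}^{m+1}/(\text{diagonal})$ by such a sublattice is manifestly torsion-free, so $\Gamma_Y=1$. This recovers the known $A_n$ picture where no monodromy/ambiguity arises. For $B_n$ and $C_n$ I would use the explicit realization $R=\mathbb{Z}^n$ (resp. the $C_n$ coweight normalization) and check that the centralizer subsystems $\Phi_Y$ — which are the root systems of centralizers of semisimple elements, hence products of type $A$, $B/C$, and $D$ factors on coordinate blocks — have primitive root lattices in $\mathbb{Z}^n$; the sign-change lattice generated by $\pm e_i\pm e_j$ and $e_i$ (resp. $2e_i$) is saturated in $\mathbb{Z}^n$, giving torsion-free quotient. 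The one genuinely delicate case is $D_n$, where the root lattice has index $2$ in $\mathbb{Z}^n$ and the subsystem $D_k\times D_{n-k}$ can fail to be saturated. Here I would argue that such non-saturated subsystems are \emph{not} of the form $\Phi_Y$ for a layer: a layer's root system $\Phi_Y$ is by construction the root system of a \emph{full centralizer} $\mathfrak{z}_\fg(C)$, and centralizers of elements of the adjoint (or the relevant quotient) torus in types $B,C,D$ always have saturated root lattices, because the centralizer is connected and its character lattice is exactly the ambient one restricted. This is the point I expect to be the main obstacle: one must verify that the building-set layers $Y$ only produce \emph{saturated} (equivalently $\mathbb{C}$-irreducible in Moci's sense) subsystems, ruling out the torsion that a naive closed subsystem could carry. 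I would close this gap by invoking the characterization of indecomposable/$\mathbb{C}$-irreducible layers from \cite{m} together with the Borel--de Siebenthal description in the $0$-dimensional case, checking directly that the exceptional quotient orders $m=3,5$ never occur for classical $\fg$ (so that $m=1$ for every layer, giving $\Gamma_Y=1$), whereas for exceptional types the possible $m>1$ is exactly what obstructs injectivity and accounts for the restriction to classical types in the statement.
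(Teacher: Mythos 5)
Your reduction of injectivity to the triviality of $\Gamma_Y$ rests on a claim that is false in types $B_n$, $C_n$, $D_n$: the group $\Gamma_Y$ is \emph{not} always trivial for classical root systems. Take $\Phi=C_n$ and the Borel--de Siebenthal subsystem $\Phi_Y=C_k\times C_{n-k}$ (the centralizer root system of a genuine $0$-dimensional layer, namely the non-identity point of $\bigcap_{\alpha\in\Phi_Y}T_\alpha$): here $R_{\Phi_Y}$ has index $2$ in its saturation $R_{\Phi}$, so $\Gamma_Y\cong\bz/2\bz$. The same happens for $D_k\times D_{n-k}\subset D_n$ and $D_k\times B_{n-k}\subset B_n$. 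Your attempted rescue for type $D$ --- that full centralizers always have saturated root lattices because the centralizer is connected --- is exactly backwards: Borel--de Siebenthal theory is precisely the statement that centralizers of torsion elements of $T$ can be non-Levi, with $R_\Phi/R_{\Phi_Y}$ cyclic of order $m>1$; for classical types $m\in\{1,2\}$, not $m=1$. Ruling out $m=3,5$ (which you propose at the end) leaves $m=2$ and hence $\Gamma_Y=\bz/2\bz$, so your conclusion ``$\Gamma_Y=1$ for every layer'' does not follow and the argument collapses at its main step.

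The paper's proof accepts that $\Gamma_Y$ is nontrivial and instead shows it is always an elementary abelian $2$-group in classical types (using Oshima's classification to reduce arbitrary closed subsystems to orthogonal sums of Levis inside maximal-rank maximal closed subsystems). The injectivity then comes from a different mechanism: if $y_1\neq y_2$ differ by a nontrivial $2$-torsion element of $\Gamma_Y$, there is a root $\alpha\in(\Phi_I\cap\bq\Phi_Y)\setminus\Phi_Y$ with $u_\alpha(y_1)=1$ but $u_\alpha(y_2)=-1$, so the centralizer root systems of $y_1$ and $y_2$ differ --- contradicting the fact that $\Phi_Y$ is already determined by $Q(x)$. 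This also explains why the statement is restricted to classical types: for $\bz/3\bz$ or $\bz/5\bz$ torsion (as in the $G_2$ example with the long-root $A_2$), translating $y$ permutes nontrivial roots of unity without changing the set $\{\alpha:u_\alpha(y)=1\}$, and the argument genuinely fails. If you want to salvage your write-up, you should replace the ``$\Gamma_Y$ is trivial'' claim with ``$\Gamma_Y$ is $2$-torsion'' and add the sign-flip argument; as written, the proposal proves nothing beyond type $A$.
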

\begin{proof}
Suppose that $Q(x_1) = Q(x_2)$. In this case $\Phi_Y$ is the same for $x_1$ and $x_2$ by the previous Proposition. Let $y_1, y_2$ be the corresponding points on $\overline{T}$ be different. They are defined up to $\Gamma_Y$ (which is also the same). 
If $\Phi_Y$ is maximal closed root subsystem of maximal rank then from classification of Borel-de-Siebental  it follows that $\Gamma_Y = \bz/2\bz$.
For arbitrary closed root subsystem $\Phi_Y$ from the classification in \cite[section 10.1]{oshima} it follows that any closed root subsystem is orthogonal sum of Levi subsystems in maximal closed root subsystem of maximal rank. This implies that the group $\Gamma_Y$ is necessary the product $\prod_i \bz/2\bz$. 

Since $y_1\ne y_2$, there exists 
$ \alpha \in (\Phi_I \cap \bq \Phi_Y) \setminus \Phi_Y$  such that $ u_{\alpha}(y_1) = 1, u_{\alpha}(y_2) = -1$. Then $\Phi_Y$ should be different for $y_1$ and $y_2$ and this is a contradiction which completes the proof.
\end{proof} 

\begin{example}
Suppose that $\Phi_Y \subset \Phi$ is a root system of a Borel-de-Siebental subalgebra $\fg^{\prime}$ of $\fg$: a point $x \in X_\Phi$ is a pair $(C, z)$, where $C \in T$ and $z \in M_{\Phi_I^{\prime}}$. Then $\fg^{\prime} = \fz_{\fg}(C)$. It is well-known fact that for different $C \in T$ Borel-de-Siebental subalgebras are different. This is a prototypical example for injectivity proof in Corollary \ref{inj}. 

For exceptional types in is not the case: Consider for example the root system $G_2$. The set of long roots form a closed root subsystem $A_2$ which is a centralizer of two different elements of $T$: one is given by $e^{\alpha} = \omega$ for any long root $\alpha$ and $e^{\beta} = 1$ for any short root, the second one is given by $e^{\alpha} = \omega^2$ for any long root $\alpha$ and $e^{\beta} = 1$ for any short root. Here $\omega^3 = 1, \omega \ne 1$. 
\end{example}


To summarize, we have proved the following theorem:

\begin{thm}\label{th:compactification-quadratic}
The map $\psi: T^{reg} \to Gr(\rk \fg, \dim t_{\Phi}^{1})$ uniquely extends to the map $\overline{\psi}: X_{\Phi} \to Gr(\rk \fg, \dim t_{\Phi}^{1})$. This map is injective for root systems $A_n,B_n,C_n,D_n$. 
\end{thm}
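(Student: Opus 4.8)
The plan is to prove both assertions by a local, chart-by-chart analysis on the wonderful model, using the open cover $X_\Phi=\bigcup_\mS V_\mS$ together with the list of rational functions $u_\alpha$, $v_\beta$, $v_{\alpha,\beta}$ that were shown to be regular on each chart $V_\mS$. For the extension, I would first fix a chart with $\mS=(w(\Delta_0),I,Y,S)$ and produce, on that chart, a frame of $n=\rk\fg$ Bethe Hamiltonians $BH_i(x)$ that still spans $Q(C)$ whenever $x=C\in T^{reg}$. The key observation is that the naive Hamiltonian $BH(C,h)$ acquires poles along the boundary through the factors $e^\alpha(C)/(e^\alpha(C)-1)$, so along the ``Gaudin directions'' $b_i$ with $i>k$ (those tangent to the layer $Y$) one must clear poles by rescaling with the vanishing factor $u_{\beta_i}-1$; this is exactly the renormalization recorded in Lemma~\ref{le:renormalized_hamiltonians}. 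Regularity of the resulting expressions on $V_\mS$ is then immediate from the regularity of $u_\alpha$, $v_\alpha$ and $v_{\beta_i,\alpha}$ guaranteed by Corollary~\ref{co:alpha-contains-beta} and Proposition~\ref{pr:Moci-regular-functions}.

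Second, to land in the Grassmannian I must verify $\dim Q(x)=n$ at every $x$. For this I would project $(\ft_\Phi^{trig})^1$ onto $\delta(\fh)$, annihilating all $t_\alpha$: the $k$ generators of $\delta$-type map to a linearly independent family in $\delta(\fh)$, while the remaining $n-k$ generators lie in the kernel and, by Proposition~\ref{pr:gaudin-of-centralizer}, restrict to the Gaudin frame in $\ft_{\Phi_Y}$ attached to the image point $z\in M_{\Phi_Y}$, which is itself linearly independent by the type-$\Phi_Y$ case of Proposition~\ref{pr:AFV}. Since the two blocks are independent and their sizes add to $n$, the frame is everywhere nondegenerate, giving a regular map on each chart. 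These maps agree on the dense open $T^{reg}$, hence glue to a single regular morphism $\overline{\psi}\colon X_\Phi\to Gr(\rk\fg,\dim\ft_\Phi^{1})$; density of $T^{reg}$ also forces uniqueness of the extension, since $Gr$ is separated.

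The genuinely harder half is injectivity for $A_n,B_n,C_n,D_n$, and this is where I expect the main obstacle to lie. The strategy is to reconstruct the point $x$ from the subspace $Q(x)$ in three steps. The kernel of the projection $Q(x)\to\delta(\fh)$ is precisely the Gaudin subspace of $\ft_{\Phi_Y}$, so it recovers both the closed subsystem $\Phi_Y$ and, through the closed-embedding half of the AFV theorem, the point $z\in M_{\Phi_Y}$. Next, the coefficients of the surviving $\delta$-type generators at $t_\alpha$ for $\alpha\notin\bq\Phi_Y$ are fractional-linear in $u_\alpha(x)$ and nonvanishing there, so they recover every value $u_\alpha(x)\in\mathbb{CP}^1$, and thus the image $y\in\overline T$ up to the finite group $\Gamma_Y$.

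The crux is then to rule out this $\Gamma_Y$-ambiguity in the classical types, and I expect the delicate structural input to be exactly the claim that $\Gamma_Y\cong(\bz/2\bz)^r$ in these cases. I would invoke the classification of closed root subsystems --- Borel--de Siebenthal for the maximal-rank subsystems and Oshima's description reducing the general case to orthogonal sums of Levi subsystems inside maximal-rank ones --- to conclude that $\Gamma_Y$ is a product of copies of $\bz/2\bz$. Then any nontrivial element carrying $y_1$ to $y_2$ sends some $u_\alpha$ from $1$ to $-1$ for a root $\alpha\in(\Phi_I\cap\bq\Phi_Y)\setminus\Phi_Y$; but then the centralizer root system at $y_2$ strictly enlarges $\Phi_Y$, contradicting that $\Phi_Y$ is already pinned down by $Q(x)$. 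This contradiction is precisely Corollary~\ref{inj} and closes the argument. I flag this last structural point as the true obstacle because its failure is exactly what breaks injectivity for exceptional types: the order-$3$ ambiguity of the long-root $A_2\subset G_2$ subsystem gives two distinct torus elements with the same centralizer, so there the morphism $\overline{\psi}$ is only finite and generically bijective rather than injective.
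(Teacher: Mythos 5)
Your proposal follows essentially the same route as the paper: the renormalized frame on each chart $V_\mS$ (Lemma~\ref{le:renormalized_hamiltonians}), the dimension count via the projection onto $\delta(\fh)$ with the Gaudin block handled by Proposition~\ref{pr:AFV}, and the injectivity argument recovering $\Phi_Y$, $z$, and $y$ up to $\Gamma_Y$, with the $(\bz/2\bz)^r$ structure of $\Gamma_Y$ in classical types ruling out the residual ambiguity. The argument, including the $G_2$ counterexample you flag for the exceptional case, matches the paper's proof.
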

\begin{rem}
  Moreover for $\Phi = A_n$ from Proposition \ref{a_n_case} and Theorem \ref{afv12} it follows that $\overline{\psi(T^{reg})} = X_{\Phi}$.  
\end{rem}









\section{The vector bundle $\mathcal{Q}$ over $X_\Phi$}

In this section we globalize the construction of the commutative subspaces
$Q(x)\subset (\ft^{trig}_\Phi)^{1}$ and prove that they form a vector bundle
over $X_\Phi$, which can be naturally identified with the tangent
bundle of $X_\Phi$ logarithmic along the boundary divisor
$D:=X_\Phi\setminus T^{reg}$.

\subsection{The Bethe bundle}
Recall from Theorem~\ref{th:compactification-quadratic} that the map
\[
\psi\colon T^{reg}\longrightarrow Gr\big(n,(\ft^{trig}_\Phi)^{(1)}\big)
\]
extends uniquely to a regular map
\[
\overline{\psi}\colon X_\Phi\longrightarrow Gr\big(n,(\ft^{trig}_\Phi)^{(1)}\big).
\]

Let $\mathcal{T}$ denote the tautological rank $n$ subbundle in the trivial bundle $(\ft^{trig}_\Phi)^{1}\otimes\mathcal{O}_{Gr(n,(\ft^{trig}_\Phi)^{1})}$ over
$Gr(n,(\ft^{trig}_\Phi)^{1})$. Here $\mathcal{O}_X$ stands for the structure sheaf of the variety $X$. We set:

\begin{defn}
The \emph{Bethe bundle} $\mathcal{Q}$ over $X_\Phi$ is the pullback
of $\mathcal{T}$ along $\overline{\psi}$:
\[
\mathcal{Q}:=\overline{\psi}^*(\mathcal{T})\subset
(\ft^{trig}_\Phi)^{1}\otimes\mathcal{O}_{X_\Phi}.
\]
\end{defn}

By construction, for every point $x\in X_\Phi$, the fiber $\mathcal{Q}_x$ is
precisely the commutative subspace $Q(x)\subset (\ft^{trig}_\Phi)^{1}$
constructed in the previous section. In particular, $\mathcal{Q}$ is a vector
bundle of rank $n$ and restricts over $T^{reg}$ to the
subbundle spanned by Bethe Hamiltonians.

\subsection{The trigonometric Casimir connection and logarithmic singularities}

Let $\{\alpha_i\}_{i=1}^r$ be the simple roots and $\{\omega_i^\vee\}_{i=1}^r$ be the corresponding fundamental coweights and set
$u_i=e^{\alpha_i}$, so that $(u_1,\dots,u_r)$ are multiplicative coordinates
on the maximal torus $T$. In these coordinates, the trigonometric Casimir
connection can be written 
\begin{equation}
\label{eq:trig-casimir}
\nabla = \kappa\, d
+\frac{1}{2}\sum_{\alpha\in\Phi^+}
t_\alpha\,
\frac{e^\alpha+1}{e^\alpha-1}\,\frac{de^\alpha}{e^\alpha}
-\sum_{i=1}^r \delta(\omega_i^\vee)\,\frac{du_i}{u_i}
=\kappa\,d-\sum_{i=1}^r BH(\omega_i^\vee)\,\frac{du_i}{u_i},
\end{equation}
where $\{h_i\}_{i=1}^r\subset\fh$ is the basis dual to $\{\omega_i\}$ and
$BH(h_i)$ are the Bethe Hamiltonians as in Section~\ref{lie_alg}. We denote
by $\omega$ the $(\ft^{trig}_\Phi)^{1}$–valued 1–form on $T$ given by
\[
\omega=\nabla-\kappa d
=\frac{1}{2}\sum_{\alpha\in\Phi^+}
t_\alpha\,
\frac{e^\alpha+1}{e^\alpha-1}\,\frac{de^\alpha}{e^\alpha}
-\sum_{i=1}^r \delta(\omega_i^\vee)\,\frac{du_i}{u_i}
=-\sum_{i=1}^r BH(\omega_i^\vee)\,\frac{du_i}{u_i}.
\]

\begin{lem}
\label{lem:log-extension}
The form $\omega$ extends from $T$ to a $(\ft^{trig}_\Phi)^{1}$–valued
1–form on $X_\Phi$ with logarithmic singularities along the boundary divisor
$D=X_\Phi\setminus T^{reg}$. Equivalently, $\omega$ defines a global section
\[
\omega\in H^0\Bigl(X_\Phi,
\;\Omega^1_{X_\Phi}(\log D)\otimes_{\mathcal{O}_{X_\Phi}}
(\ft^{trig}_\Phi)^{1}\otimes\mathcal{O}_{X_\Phi}\Bigr).
\]
\end{lem}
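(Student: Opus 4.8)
The plan is to check the logarithmic bound locally, chart by chart, on the open cover $X_\Phi=\bigcup V_{\mathcal S}$ from Section~4. Since $\omega$ takes values in the fixed finite-dimensional vector space $(\ft^{trig}_\Phi)^{1}$, the statement reduces to showing that, once $\omega$ is written in local coordinates on a chart $V_{\mathcal S}$ with $\mathcal S=(w(\Delta_0),I,Y,S)$, each scalar $1$-form coefficient has at worst a simple pole along every component of the normal-crossing boundary divisor $D$. The natural starting point is the basis-free rewriting
\[
\omega=-\sum_{i=1}^{n}BH(C,b_i)\otimes db^{i},
\]
where $\{b_i\}$ is the adapted basis of $\fh$ attached to the chart (as in Lemma~\ref{le:renormalized_hamiltonians}), $\{b^{i}\}\subset\fh^{*}$ is the dual basis, and $db^{i}=\tfrac{de^{b^{i}}}{e^{b^{i}}}$ is the associated invariant logarithmic $1$-form on $T$; this is consistent with \eqref{eq:trig-casimir} since the fundamental coweights $\omega_i^\vee$ are dual to the simple roots $\alpha_i$. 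The two inputs I would rely on are: the explicit extension of the renormalized Bethe Hamiltonians $BH_i(x)$ to $V_{\mathcal S}$ from Lemma~\ref{le:renormalized_hamiltonians} together with the regularity of $u_\alpha,v_\beta,v_{\alpha,\beta}$; and the product decomposition $V_{\mathcal S}\cong Y^\circ\times\widetilde{W}_S^{loc}$ of Propositions~\ref{pr:formal-neighborhood-of-layer} and \ref{pr:gaudin-of-centralizer}, which separates the toric directions of $Y^\circ$ from the De Concini--Procesi directions of $\widetilde{W}_S^{loc}$.

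Next I would split the sum according to the three blocks of the adapted basis. For the first two blocks $i\le k$, Lemma~\ref{le:renormalized_hamiltonians} gives $BH(C,b_i)=BH_i(x)$ with $BH_i(x)$ regular on $V_{\mathcal S}$, while $db^{i}$ is an invariant form that is regular on the interior torus $T$ and so acquires only logarithmic poles along the toric part of $D$ (being holomorphic near the interior blow-up centers, it contributes no pole along the exceptional divisors over them); these terms therefore already lie in $\Omega^1_{X_\Phi}(\log D)\otimes(\ft^{trig}_\Phi)^{1}$. For the third block $i>k$ the Hamiltonian is renormalized, $BH(C,b_i)=BH_i(x)/(u_{\beta_i}-1)$, so the summand equals $\tfrac{db^{i}}{u_{\beta_i}-1}\otimes BH_i(x)$. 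Here the model identity
\[
\frac{db^{i}}{u_{\beta_i}-1}=\frac{1}{u_{\beta_i}}\cdot\frac{de^{\beta_i}}{e^{\beta_i}-1}
\]
exhibits the form as a unit $1/u_{\beta_i}$, which equals $1$ on $Y$, times the logarithmic form $\tfrac{d(e^{\beta_i}-1)}{e^{\beta_i}-1}$ along $\{e^{\beta_i}=1\}$; as $BH_i(x)$ is again regular, the summand is logarithmic. (One expresses $db^{i}$ for $i>k$ in terms of the adapted logarithmic forms on the $\widetilde{W}_S^{loc}$ factor, the orthogonality of the three blocks guaranteeing that only the $\fh_Y$-directions contribute.) Assembling the three blocks yields the claimed global section.

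The step I expect to be the main obstacle is controlling the $1$-forms $db^{i}$ and $\tfrac{db^i}{u_{\beta_i}-1}$ under the iterated blow-ups defining $X_\Phi$, i.e.\ verifying that they remain logarithmic along \emph{every} component of $D$, including the exceptional divisors created over the building set. This is precisely where the decomposition $V_{\mathcal S}\cong Y^\circ\times\widetilde{W}_S^{loc}$ does the work: the forms with $i\le k$ restrict to invariant logarithmic forms on the toric factor $Y^\circ$, while the forms with $i>k$ restrict, via the linearization $e^{\beta_i}-1\equiv\beta_i$ near $Y$, to logarithmic forms on the wonderful factor $\widetilde{W}_S^{loc}$, which are logarithmic along the whole boundary by the defining property of De Concini--Procesi models underlying Propositions~\ref{pr:DCP-reg-functions} and \ref{pr:Moci-regular-functions}. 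A conceptual cross-check is the general theory of regular-singular flat connections: the connection \eqref{eq:trig-casimir} has regular singularities along the toric arrangement with residues the $t_\alpha$, and such a connection acquires only logarithmic poles once the arrangement is resolved to normal crossings on $X_\Phi$. Finally, I would record that the values of $\omega$ lie in the Bethe subspace $Q(x)=\mathcal Q_x$, so $\omega$ is in fact a section of $\Omega^1_{X_\Phi}(\log D)\otimes\mathcal Q$; this refinement is what feeds into the identification of $\mathcal Q$ with the logarithmic tangent bundle in Theorem~B.
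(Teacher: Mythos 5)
Your proof is correct, but it takes a genuinely different route from the paper's. The paper argues globally and without charts: it rewrites each summand of $\omega$ via the identity $\frac{e^\alpha+1}{e^\alpha-1}\,\frac{de^\alpha}{e^\alpha}=2\,d\log(e^\alpha-1)-d\log(e^\alpha)$, so that $\omega$ becomes a \emph{constant}-coefficient combination (coefficients in $(\ft^{trig}_\Phi)^{1}$) of forms $d\log(e^\alpha)$ and $d\log(e^\alpha-1)$; each of these is the logarithmic differential of a global meromorphic function on $\overline{T}$ whose divisor lies in the union of the toric boundary and the subtori $\overline{T}_\alpha$, and since $X_\Phi$ is obtained by blowing up centers inside that union, the pullback of each $d\log f$ is still a $d\log$ of a function with divisor supported on the normal-crossing divisor $D$, hence logarithmic. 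You instead work chart by chart on the cover $V_{\mathcal S}$, regroup $\omega$ using the adapted basis, and invoke the regularity of the renormalized Hamiltonians of Lemma~\ref{le:renormalized_hamiltonians} together with the product structure $V_{\mathcal S}\cong Y^\circ\times\widetilde{W}_S^{loc}$. Both are sound; the paper's argument is shorter and sidesteps the bookkeeping of which invariant forms stay logarithmic through the blow-ups (the one point where your write-up is somewhat compressed), while your computation extracts more: it identifies the coefficients of $\omega$ in an adapted logarithmic coframe as exactly the renormalized Bethe Hamiltonians, which is precisely the content the paper defers to the proof of Theorem~\ref{th:bundle}. In effect you have merged the proof of the lemma with the local computation behind Theorem~B; your closing observation that $\omega$ lands in $\Omega^1_{X_\Phi}(\log D)\otimes\mathcal{Q}$ is likewise stated by the paper only after the lemma, as the input to the pairing $\Theta$.
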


\begin{proof}
First rewrite each summand in~\eqref{eq:trig-casimir} as a linear combination
of logarithmic differentials. Note that
\[
\frac{e^\alpha+1}{e^\alpha-1}\,\frac{de^\alpha}{e^\alpha}
= 2\,d\log(e^\alpha-1)-d\log(e^\alpha).
\]
Thus $\omega$ is a linear combination of forms of the type
$d\log(e^\alpha)$ and $d\log(e^\alpha-1)$, $\alpha\in\Phi^+$, with
coefficients in $(\ft^{trig}_\Phi)^{1}$. Over the torus $T$, these are
logarithmic 1–forms with the poles along the divisors $e^\alpha=0$,
$e^\alpha=\infty$ and $e^\alpha=1$.

The variety $X_\Phi$ is obtained from the toric variety $\overline{T}$ by a
sequence of blow-ups along smooth centers contained in the union of the
divisors $\overline{T}_\alpha=\{e^\alpha=1\}$ and the boundary of
$\overline{T}$. In particular, the pullback of the $1$-form $\omega$ to $X_\Phi$ still locally has the form of a linear combination of $\frac{df_j}{f_j}$ for some meromorphic functions $f_j$ with possible zeros on the boundary divisor (i.e. on the full preimage of the arrangement of subtori and the boundary of $\overline{T}$).
Hence $\omega$ extends to a global section of
$\Omega^1_{X_\Phi}(\log D)\otimes_{\mathcal{O}_{X_\Phi}}((\ft^{trig}_\Phi)^{1}\otimes\mathcal{O}_{X_\Phi}))$,
as claimed.
\end{proof}

\subsection{The pairing with the logarithmic tangent bundle}

Since the fiber of $\mathcal{Q}$ at $x\in X_\Phi$ is the commutative
subspace $Q(x)\subset(\ft^{trig}_\Phi)^{1}$ spanned by the coefficients of $\omega$, the form $\omega$ is a global section
\[
\omega\in H^0\bigl(X_\Phi,\;
\Omega^1_{X_\Phi}(\log D)\otimes\mathcal{Q}\bigr),
\]
i.e. it induces a bundle morphism
\begin{equation}
\label{eq:bundle-pairing}
\Theta\colon
T_{X_\Phi}(-\log D)\;\longrightarrow\;\mathcal{Q}
\end{equation}
obtained by contraction with $\omega$. Concretely, at a point
$x\in X_\Phi$ and for $v\in T_x X_\Phi(-\log D)$, we have $\Theta_x(v)= \iota_v\omega$.

\begin{thm}\label{th:bundle}
    $\mathcal{Q}$ is the tangent bundle logarithmic at the boundary divisor $D=X_\Phi\setminus T^{reg}$.
\end{thm}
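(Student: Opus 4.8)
The plan is to exhibit the contraction map $\Theta\colon T_{X_\Phi}(-\log D)\to\mathcal{Q}$ from~\eqref{eq:bundle-pairing} as an isomorphism of vector bundles. Both sides are locally free of rank $n$: the logarithmic tangent bundle has rank equal to $\dim X_\Phi=n$, and $\mathcal{Q}$ has rank $n$ by Proposition~\ref{pr:dimQ}. Since a morphism of vector bundles of equal rank on an integral variety is an isomorphism as soon as it is an isomorphism on the fibers over a dense open set (its degeneracy locus is closed, and it suffices to rule it out everywhere), the bulk of the proof is a fiberwise computation. First I would verify that $\Theta$ is an isomorphism over the open torus $T^{reg}$. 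There, by~\eqref{eq:trig-casimir}, the form $\omega=-\sum_i BH(\omega_i^\vee)\,\tfrac{du_i}{u_i}$, and the logarithmic vector fields $u_i\partial_{u_i}$ form a frame of $T_T(-\log)$; contracting gives $\Theta(u_i\partial_{u_i})=-BH(\omega_i^\vee)=-BH_i(C)$, which are exactly a basis of $Q(C)$. Hence $\Theta$ is an isomorphism over $T^{reg}$, and in particular it is a well-defined nonzero morphism of the asserted rank.

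The main work is to check that $\Theta$ remains an isomorphism over the boundary divisor $D$, and this is where I expect the principal difficulty. The strategy is local: it suffices to work in each chart $V_\mS$ of the covering of $X_\Phi$ constructed in Section~\ref{pwm}, using the description of $Q(x)$ from the corollary following Proposition~\ref{pr:dimQ}. In such a chart one has explicit logarithmic coordinates adapted to the nested set $S$ and the subset $I\subset\Delta$; the associated logarithmic vector fields give a local frame of $T_{X_\Phi}(-\log D)$. I would then compute the contractions $\iota_v\omega$ against this frame, using the rewriting $\tfrac{e^\alpha+1}{e^\alpha-1}\tfrac{de^\alpha}{e^\alpha}=2\,d\log(e^\alpha-1)-d\log(e^\alpha)$ from Lemma~\ref{lem:log-extension} together with the regularity statements of Corollary~\ref{co:alpha-contains-beta} and Proposition~\ref{pr:Moci-regular-functions}. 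The expected output is that the images of the frame vectors under $\Theta$ are, up to the invertible reparametrizations $u_{\beta_i}-1$ appearing in the renormalized Hamiltonians of Lemma~\ref{le:renormalized_hamiltonians}, precisely the three families of generators of $Q(x)$ listed in the corollary: the $\tau_w(h)$ for $h\in\fh_I^\perp$, the intermediate family for $h\in\fh_Y^\perp\cap\fh_I$, and the Gaudin generators in $\ft_{\Phi_Y}^1$.

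The hard part will be matching the logarithmic frame of the blown-up variety to these three families uniformly across all strata, since the coordinates $du_i/u_i$ do not individually extend but only their appropriate combinations do, and the normalization factors $(u_{\beta_i}-1)$ that render $BH_i(x)$ regular correspond exactly to the logarithmic nature of the frame along the exceptional divisors. Concretely, I would argue that contraction against the vector field dual to a coordinate vanishing on a boundary component of $D$ picks out the renormalized Hamiltonian $BH_i(x)$ rather than $BH_i(C)$, the factor $(u_{\beta_i}-1)$ being absorbed by the logarithmic pole. Once the images of a local frame are shown to coincide with the spanning set of $Q(x)$ exhibited in the corollary, linear independence of that spanning set — already established in the proof of Proposition~\ref{pr:dimQ} via the projection to $\delta(\fh)$ together with Proposition~\ref{pr:AFV} — shows $\Theta_x$ is surjective, hence bijective by the rank count. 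Since this holds in every chart $V_\mS$ and these cover $X_\Phi$, the map $\Theta$ is a global isomorphism $T_{X_\Phi}(-\log D)\xrightarrow{\ \sim\ }\mathcal{Q}$, which is the assertion. A clean alternative to the fiberwise argument, which I would use to shorten the exposition, is to invoke that $\Theta$ is a map of rank-$n$ bundles that is an isomorphism over the dense open $T^{reg}$, and then verify merely that $\Theta$ is \emph{injective} fiberwise on $D$ (equivalently that $\omega$ has maximal rank there); nondegeneracy at the generic point of each boundary divisor suffices by normality of $X_\Phi$, reducing the boundary check to the codimension-one strata where $\Phi_Y$ has rank one and the computation is essentially that of the type $A_1$ residue.
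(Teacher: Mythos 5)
Your main line of argument coincides with the paper's proof: work chart by chart on $V_\mS$, trivialize $\Omega^1_{X_\Phi}(\log D)$ by the logarithmic differentials of the chart coordinates, observe that the renormalized Hamiltonians of Lemma~\ref{le:renormalized_hamiltonians} are the coefficients of $\omega$ at $\frac{du_i}{u_i}$ and $\frac{d(u_{\beta_i}-1)}{u_{\beta_i}-1}$, and conclude fiberwise bijectivity of $\Theta$ from the linear independence already established in Proposition~\ref{pr:dimQ}. The one step you leave implicit and should make explicit is the identity $u_{\beta_i}-1=u_{\beta_i^s}\prod_{j} v_{\beta_i^j,\beta_i^{j+1}}$ along the descending chain in the partial order determined by $S$: it is precisely this that shows the frame $\{\frac{du_i}{u_i},\frac{d(u_{\beta_i}-1)}{u_{\beta_i}-1}\}$ differs from the genuine coordinate logarithmic frame $\{\frac{du_i}{u_i},\frac{dv_{\beta_i,\gamma_i}}{v_{\beta_i,\gamma_i}}\}$ by a triangular (hence invertible) matrix, which is the paper's key computation.

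Your proposed shortcut contains a genuine error, however. The reduction to generic points of the irreducible components of $D$ is itself sound ($\det\Theta$ is a section of a line bundle, nonvanishing on $T^{\mathrm{reg}}$, so its zero locus is a divisor supported on $D$), but it is \emph{not} true that at those generic points $\Phi_Y$ has rank one. The exceptional divisor over a building-set element $G$ of codimension at least two in $\overline{T}$ is a codimension-one component of $D$, and at its generic point the relevant root subsystem is all of $\Phi_G$: the residual coefficients there are the Gaudin Hamiltonians of $\ft_{\Phi_G}$ at a generic point of $\mathbb{P}(\fh_G)$ together with (for the radial logarithmic direction) the central element $c_{\Phi_G}$, and their linear independence is exactly the content of Proposition~\ref{pr:AFV}, not an $A_1$ residue computation. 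So the shortcut does not bypass the rational (Gaudin) nondegeneracy input; only the proper transforms of the subtori $\overline{T_\alpha}$ and of the toric boundary divisors reduce to rank-one checks.
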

\begin{proof}
We have the morphism~\eqref{eq:bundle-pairing} of vector bundles
\[
\Theta\colon T_{X_\Phi}(-\log D)\;\longrightarrow\;\mathcal{Q}
\]
Both $T_{X_\Phi}(-\log D)$ and $\mathcal{Q}$ are locally free of rank
$n=\mathrm{rk}\,\fg$ on the smooth variety $X_\Phi$. We have to show that the above morphism is an isomorphism on any open set $V_{\mathcal{S}}\subset X_\Phi$. 
On a given $V_{\mathcal{S}}$, we have the local coordinates given by $u_i$ for $i=1,\ldots,k$ and $v_{\beta_i,\gamma_i}$ for $i=k+1,\ldots,n$, with $\gamma_i$ being the elements preceding $\beta_i$ in the partial order on the set simple roots of $\Phi_Y$ determined by $S$. Their logarithmic differentials trivialize the bundle $\Omega^1_{V_{\mathcal{S}}}(\log D)$. Note that the renormalized Hamiltonians from  Lemma~\ref{le:renormalized_hamiltonians} are the coefficients of $\dfrac{du_i}{u_i}$ for $i=1,\ldots,k$ and $\dfrac{d(u_{\beta_i}-1)}{u_{\beta_i}-1}$ for $i=k+1,\ldots,n$. On the other hand, we have $u_{\beta_i}-1=u_{\beta_i^s}\prod\limits_{j=1}^{s-1} v_{\beta_i^j,\beta_i^{j+1}}$ where $\beta_i^1=\beta_i,\ \beta_i^2=\gamma_i$ and $\beta_i^{j+1}$ precedes $\beta_i^j$ in the partial order on $\Delta_Y$ determined by the nested set $S$. So the logarithmic differentials of $u_{\beta_i}$ and of $v_{\beta_i,\gamma_i}$ are related by a triangular linear transformation. Hence the renormalized Hamiltonians from  Lemma~\ref{le:renormalized_hamiltonians} are obtained by a triangular linear transformations from the coefficients of $\omega$ in the basis of sections $\Omega^1_{V_{\mathcal{S}}}(\log D)$ given by $\frac{du_i}{u_i}$ for $i=1,\ldots,k$ and $\frac{dv_{\beta_i,\gamma_i}}{v_{\beta_i,\gamma_i}}$ for $i=k+1,\ldots,n$. So $\Theta$ is an isomorphism on a fiber over any $x\in X_\Phi$, so it is an isomorphism of vector bundles on
all of $X_\Phi$. Equivalently, we obtain an isomorphism
\[
\mathcal{Q}\;\simeq\;T_{X_\Phi}(-\log D),
\]
which is precisely the statement of the theorem.
\end{proof}

\subsection{Restriction to the fiber over $1\in T$} Theorem~\ref{th:bundle} has the following consequence describing the restriction of $\mathcal{Q}$ to the central fiber $M_\Phi=X_\Phi\times_{\overline{T}}1$. The restriction of $\mathcal{Q}$ to $M_\Phi$ is sheaf $\mathcal{G}$ of Gaudin subalgebras in $\ft_\Phi$ from \cite{afv2}. We can describe this sheaf entirely in terms of $M_\Phi$, namely, we have the following generalization of \cite[Theorem~3.3]{afv}

\begin{cor}
The morphism 
\[
p : \widetilde{M}_\Phi \longrightarrow M_\Phi
\]
is a line bundle, denote it by $L$. Then the sheaf \(\mathcal{G}\) of Gaudin subalgebras is a vector bundle on \(M_\Phi\) isomorphic, as a locally free sheaf, to
\[
\mathcal{D}^{1}_{L^\vee}(-\log D),
\]
the sheaf of first order differential operators on \(M_\Phi\) twisted by \(L^\vee\) whose symbol is logarithmic along \(D\). In particular, there is an exact sequence of sheaves on \(M_\Phi\):
\[
0 \longrightarrow \mathcal{O}_{M_\Phi} \longrightarrow \mathcal{G} \longrightarrow T_{M_\Phi}(-\log D) \longrightarrow 0,
\]
where the embedding of the trivial line bundle \(\mathcal{O}_{M_\Phi}\) sends \(1\) to
\[
c_\Phi = \sum\limits_{\alpha\in\Phi^+} t_{\alpha}.
\]
\end{cor}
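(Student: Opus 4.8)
The plan is to deduce the Corollary by restricting the isomorphism $\mathcal{Q}\cong T_{X_\Phi}(-\log D)$ of Theorem~\ref{th:bundle} to the central fiber $M_\Phi=X_\Phi\times_{\overline{T}}\{1\}$ and then recognizing both sides intrinsically on $M_\Phi$. On the left I would first identify $\mathcal{Q}|_{M_\Phi}$ with the Gaudin sheaf $\mathcal{G}$. The point $1\in T$ lies on the zero-dimensional layer $Y=\{1\}$, whose centralizer is all of $\fg$, so that $\Phi_Y=\Phi$ and $\fh_Y=\fh_I=\fh$. The Corollary following Proposition~\ref{pr:dimQ} then degenerates: its first two summands vanish, since $\fh_I^{\perp}=0$ and $\fh_Y^{\perp}\cap\fh_I=0$, leaving precisely the Gaudin subspace in $\ft_\Phi^{1}\subset(\ft^{trig}_\Phi)^{1}$ attached to $z\in M_\Phi$. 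Hence, via the canonical map $\ft_\Phi\to\ft^{trig}_\Phi$, one has $\mathcal{Q}|_{M_\Phi}=\overline{\iota}^{*}\mathcal{T}=\mathcal{G}$, the sheaf of Gaudin subalgebras from Proposition~\ref{pr:AFV}.

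For the geometry near $M_\Phi$ I would apply Proposition~\ref{pr:formal-neighborhood-of-layer} to the layer $Y=\{1\}$: it gives $V_{\{1\}}=\widetilde{M}_\Phi^{loc}$, the formal neighborhood of $M_\Phi$ inside $\widetilde{M}_\Phi$, with the boundary divisor $D$ restricting to $M_\Phi$ together with the strict transform of the hyperplane arrangement. Consequently $T_{X_\Phi}(-\log D)|_{M_\Phi}=T_{\widetilde{M}_\Phi}(-\log D)|_{M_\Phi}$, and the task reduces to computing the restriction of the logarithmic tangent bundle of $\widetilde{M}_\Phi$ to the divisor $M_\Phi$. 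Next I would establish that $p:\widetilde{M}_\Phi\to M_\Phi$ is a line bundle $L$: the arrangement in $\fh$ is central, hence invariant under the scaling $\bc^{\times}$-action, which lifts to the iterated blow-up $\widetilde{M}_\Phi$ because all blow-up centers are cones. Exactly as $Bl_0\fh=\mathrm{Tot}(\mathcal{O}_{\bp(\fh)}(-1))$, the contracting action exhibits $\widetilde{M}_\Phi$ as the total space of a line bundle $L$ over its central fiber, with $M_\Phi$ the zero section and $L=N_{M_\Phi/\widetilde{M}_\Phi}$; moreover the strict transforms of the arrangement are pulled back from $M_\Phi$ along $p$.

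Finally I would compute $T_{\mathrm{Tot}(L)}(-\log D)|_{M_\Phi}$. Writing $t$ for a fiber coordinate, which is a local section of $L^{\vee}$, the vertical Euler field $t\,\partial_t$ is logarithmic along the zero section and restricts to a nowhere-vanishing section, yielding a distinguished trivial subsheaf $\mathcal{O}_{M_\Phi}$; the horizontal part surjects onto $T_{M_\Phi}(-\log D)$. This produces the exact sequence
\[
0\to\mathcal{O}_{M_\Phi}\to T_{\widetilde{M}_\Phi}(-\log D)|_{M_\Phi}\to T_{M_\Phi}(-\log D)\to 0,
\]
whose extension class is the Atiyah class of $L^{\vee}$; this is exactly the defining extension of the sheaf $\mathcal{D}^{1}_{L^{\vee}}(-\log D)$ of first-order differential operators on $L^{\vee}$ with logarithmic symbol, giving $\mathcal{G}\cong\mathcal{D}^{1}_{L^{\vee}}(-\log D)$. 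I model this last identification on the type-$A$ computation of \cite[Theorem 3.3]{afv}. To match the distinguished section with $c_\Phi$, note that $c_\Phi=\sum_{\alpha\in\Phi^{+}}t_\alpha=H(\chi,\chi)\in G(\chi)$ for every $\chi$ (and is central in $\ft_\Phi$), so $c_\Phi$ spans the kernel of the symbol map $\mathcal{G}\to T_{M_\Phi}(-\log D)$, i.e. the copy of $\mathcal{O}_{M_\Phi}$ coming from order-zero operators, identifying the embedding $1\mapsto c_\Phi$.

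The main obstacle will be the last identification: pinning down the extension class precisely, namely verifying that the restricted logarithmic tangent bundle is the nontrivial Atiyah extension $\mathcal{D}^{1}_{L^{\vee}}(-\log D)$ rather than the split sum $\mathcal{O}_{M_\Phi}\oplus T_{M_\Phi}(-\log D)$, and fixing the duality $L$ versus $L^{\vee}$ through the fiber-coordinate and Euler-field conventions. Everything else is a matter of unwinding the local structure already supplied by Propositions~\ref{pr:formal-neighborhood-of-layer} and~\ref{pr:AFV} and the boundary-fiber description following Proposition~\ref{pr:dimQ}.
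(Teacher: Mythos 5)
Your proposal is correct and follows essentially the same route as the paper's (very terse) proof: identify the formal neighborhood of $M_\Phi$ in $X_\Phi$ with $\widetilde{M}_\Phi^{loc}$, restrict the isomorphism $\mathcal{Q}\cong T_{X_\Phi}(-\log D)$ of Theorem~B there, recognize the restricted logarithmic tangent bundle of the total space of $L$ as $\mathcal{D}^1_{L^\vee}(-\log D)$, and identify the distinguished trivial subsheaf via the Euler vector field, whose pairing with $\omega$ gives $c_\Phi$. You simply supply the details (the $\mathbb{C}^\times$-contraction exhibiting $\widetilde{M}_\Phi$ as a line bundle, the Atiyah-extension computation, and $c_\Phi=H(\chi,\chi)$) that the paper leaves implicit.
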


\begin{proof}
    The formal neighborhood of $M_\Phi$ in $X_\Phi$ is isomorphic to the formal neighborhood $M_\Phi^{loc}$ of $M_\Phi$ in $\overline{M}_\Phi$, and by Theorem~\ref{th:bundle} the restriction of $\mathcal{Q}$ to $M_\Phi^{loc}$ is $T_{M_\Phi^{loc}}(-\log D)$. So the first statement $\mathcal{G}=\mathcal{D}^{1}_{L^\vee}(-\log D)$ follows.

    Next, the image of \(\mathcal{O}_{M_\Phi}\) is spanned by the pairing of the $1$-form $\omega$ with the Euler vector field on $M_\Phi^{loc}$, and that is $c_\Phi$.
\end{proof}

\subsection{Sheaves of commutative subspaces}

Let $U(\ft^{trig}_\Phi)$ be the universal enveloping algebra and let
\[
\varphi : U(\ft^{trig}_\Phi)\longrightarrow A
\]
be a homomorphism of unital associative algebras (for example,
$A=\End(V)$ for a representation $V$ of $\ft^{trig}_\Phi$).

Composing the inclusion
$\mathcal{Q}\hookrightarrow \ft^{trig}_\Phi\otimes\mathcal{O}_{X_\Phi}$ with
$\varphi$ we obtain an $\mathcal{O}_{X_\Phi}$–linear map
\[
\omega_A := (\varphi\otimes\Id_{\mathcal{O}_{X_\Phi}})\circ\omega\colon
\mathcal{Q} \longrightarrow A\otimes \mathcal{O}_{X_\Phi}.
\]
Extending $\omega_A$ multiplicatively to the symmetric algebra gives a
homomorphism of sheaves of commutative algebras
\[
\varphi : S\mathcal{Q}\longrightarrow A\otimes \mathcal{O}_{X_\Phi}.
\]
We denote by $\mathcal{E}_A$ the image sheaf; its local sections form
commutative subspaces of $A$, and $\mathcal{E}_A$ is a sheaf of
$\mathcal{O}_{X_\Phi}$–algebras on~${X_\Phi}$.

\begin{cor}
\label{cor:Poisson-kernel}
Let $\varphi : U(\ft^{trig}_\Phi)\to A$ be an algebra homomorphism.  
Then the kernel of the induced homomorphism of sheaves of algebras
\[
\varphi : S\mathcal{Q} \longrightarrow A\otimes \mathcal{O}_{X_\Phi}
\]
is an ideal in $S\mathcal{Q}$ closed with respect to the Poisson bracket.
\end{cor}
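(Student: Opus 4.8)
The plan is to use the identification $\mathcal{Q}\cong T_{X_\Phi}(-\log D)$ from Theorem~\ref{th:bundle}, which turns $S\mathcal{Q}$ into the sheaf of functions on the logarithmic cotangent bundle $T^{*}X_\Phi(\log D)$ with its canonical Poisson bracket; the assertion that $\ker\varphi$ is Poisson-closed means the coisotropy condition $\{\ker\varphi,\ker\varphi\}\subseteq\ker\varphi$. The question is local, so I fix a chart $V_{\mathcal S}$ and a frame $\xi_1,\dots,\xi_n$ of $\mathcal{Q}=T_{X_\Phi}(-\log D)$, and set $b_i:=\varphi(\xi_i)\in\mathcal A:=A\otimes\mathcal O_{X_\Phi}$. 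Since each fiber $Q(x)$ is commutative and $\varphi$ is an algebra homomorphism, the $b_i$ commute pairwise, so $\varphi\colon S\mathcal{Q}\to\mathcal A$ is indeed a well-defined homomorphism of commutative $\mathcal O_{X_\Phi}$-algebras, and in this frame the canonical bracket is fixed by $\{\xi_i,\xi_j\}=[\xi_i,\xi_j]=\sum_k c_{ij}^{k}\xi_k$, by $\{\xi_i,f\}=\xi_i(f)$, and by $\{f,g\}=0$ for $f,g\in\mathcal O_{X_\Phi}$, where $c_{ij}^{k}\in\mathcal O_{X_\Phi}$ are the structure functions of the Lie algebroid $T_{X_\Phi}(-\log D)$.

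The essential input is a closedness identity coming from the flatness of the trigonometric Casimir connection. Applying $\varphi$ to the $(\ft^{trig}_\Phi)^{1}$-valued form $\omega$ of Lemma~\ref{lem:log-extension} produces an $A$-valued logarithmic $1$-form $\omega_A$ with $\omega_A(\xi_i)=-b_i$. Its curvature is $d\omega_A-\omega_A\wedge\omega_A$, and because the values of $\omega_A$ lie in the commutative subalgebra generated by the $b_i$ one has $\omega_A\wedge\omega_A=0$; hence flatness of $\nabla$ forces $d\omega_A=0$. Evaluating $d\omega_A=0$ on $\xi_i,\xi_j$ gives the identity, which I call $(\star)$,
\[
\xi_i(b_j)-\xi_j(b_i)=b_{[\xi_i,\xi_j]}=\sum_k c_{ij}^{k}\,b_k,
\]
valid first on $T^{reg}$ and then, by the logarithmic extension of $\omega$ across $D$ together with the density of $T^{reg}$, on all of $V_{\mathcal S}$.

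With $(\star)$ available the verification is a direct computation. Let $F,G\in\ker\varphi$, put $\hat F:=\varphi(F)=0$, $\hat G:=\varphi(G)=0$, and write $\hat F_i:=\varphi(\partial F/\partial\xi_i)$, $\hat G_j:=\varphi(\partial G/\partial\xi_j)$. Expanding $\varphi(\{F,G\})$ by the Leibniz rule and the bracket relations above groups the answer into a \emph{structure-constant} term $\sum_{i,j,k}c_{ij}^{k}\hat F_i\hat G_j\,b_k$ and two \emph{coefficient-derivative} terms in which only the $\mathcal O_{X_\Phi}$-coefficients of $F$ and $G$ are differentiated along the $\xi_i$. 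Differentiating the relations $\hat F=0$ and $\hat G=0$ by the derivations $\xi_i$ of $\mathcal A$ rewrites those coefficient-derivatives in terms of $\hat F_j,\hat G_j$ and the quantities $\xi_i(b_j)$; substituting and using $(\star)$ to replace $\xi_i(b_j)-\xi_j(b_i)$ by $\sum_k c_{ij}^{k}b_k$ shows that the two coefficient-derivative terms combine to precisely $-\sum_{i,j,k}c_{ij}^{k}\hat F_i\hat G_j\,b_k$, which cancels the structure-constant term. Thus $\varphi(\{F,G\})=0$, i.e. $\{\ker\varphi,\ker\varphi\}\subseteq\ker\varphi$, and since this holds on every chart it holds globally.

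The main obstacle is the identity $(\star)$: everything downstream is forced algebra, so the genuine content is that the renormalized Bethe Hamiltonians are the coefficients of a \emph{closed} logarithmic $1$-form on all of $X_\Phi$. This is exactly where the flatness of $\nabla$ (Toledano Laredo) and the logarithmic extension of Lemma~\ref{lem:log-extension} are used, and where one must confirm that $\omega_A\wedge\omega_A$ really vanishes, i.e. that the commutativity of $Q(x)$ is preserved by $\varphi$ uniformly in $x$ up to and including the boundary divisor $D$. Conceptually this expresses that the flat connection provides a quantization whose classical spectral locus is involutive (compare Gabber's involutivity theorem), and it yields the stated consequence that the support of $\mathcal{Q}$ in a representation is coisotropic, hence Lagrangian when $V$ is finite-dimensional.
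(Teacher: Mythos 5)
Your proof is correct and follows essentially the same route as the paper's: both reduce the claim to the Leibniz-rule expansion of the Poisson bracket on $S\mathcal{Q}=S\bigl(T_{X_\Phi}(-\log D)\bigr)$ together with the identity $\xi(\omega_A(\eta))-\eta(\omega_A(\xi))=\omega_A([\xi,\eta])$ and the commutativity of the image sheaf. The only difference is presentational: you make the closedness identity $(\star)$ (flatness of the Casimir connection plus vanishing of $\omega_A\wedge\omega_A$) explicit, whereas the paper imports exactly this step implicitly through the phrase ``rewriting as in \cite{afv}''.
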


\begin{proof}
The argument is parallel to that of Corollary~4.1 in \cite{afv}.
Since $S\mathcal{Q}$ is the symmetric algebra of the sheaf of Lie algebras
$\mathcal{Q}=T_{X_{\Phi}}(-\log D)$, its Poisson bracket is
determined by the Lie bracket on sections of $\mathcal{Q}$ via the
Leibniz rule.

Let $\xi_1,\dots,\xi_k,\eta_1,\dots,\eta_\ell$ be local sections of
$\mathcal{Q}$.  Then
\[
\{\xi_1\cdots \xi_k,\eta_1\cdots \eta_\ell\}
=
\sum_{i,j}
[\xi_i,\eta_j]\,
\xi_1\cdots \widehat{\xi_i}\cdots\xi_k
\eta_1\cdots \widehat{\eta_j}\cdots\eta_\ell.
\]
Applying $\varphi$ we obtain
\begin{align*}
\varphi\bigl(\{\xi_1\cdots \xi_k,\eta_1\cdots \eta_\ell\}\bigr)
&=
\sum_{i,j}
\omega_A([\xi_i,\eta_j])\,
\prod_{r\neq i}\omega_A(\xi_r)\,
\prod_{s\neq j}\omega_A(\eta_s).
\end{align*}
Since all $\omega_A(\xi_r)$ and $\omega_A(\eta_s)$ lie in the commutative
image sheaf $\mathcal{E}_A$, they commute pairwise.

Rewriting as in \cite{afv} one finds
\[
\varphi\bigl(\{\xi_1\cdots \xi_k,\eta_1\cdots \eta_\ell\}\bigr)
=
\sum_i
\Bigl(\xi_i\varphi(\eta_1\cdots\eta_\ell)\Bigr)
\prod_{r\neq i}\omega_A(\xi_r)
-
\sum_j
\Bigl(\eta_j\varphi(\xi_1\cdots\xi_k)\Bigr)
\prod_{s\neq j}\omega_A(\eta_s).
\]
Since such products generate $S\mathcal{Q}$, this shows that if
$a,b\in S\mathcal{Q}$ satisfy $\varphi(a)=\varphi(b)=0$, then
$\varphi(\{a,b\})=0$.  Hence $\ker\varphi$ isclosed with respect to the Poisson bracket.
\end{proof}

\subsection{Coisotropic and Lagrangian spectra}

Via the identification
\[
S\mathcal{Q}\;\simeq\;\mathcal{O}\bigl(T^*X_{\Phi}(\log D)\bigr),
\]
we have a homomorphism
\[
\varphi : S\mathcal{Q}\longrightarrow A\otimes\mathcal{O}_{X_{\Phi}}.
\]

Let $\mathcal{E}_A$ be the image sheaf, and set
\[
Z_A := \Spec_{X_{\Phi}}(\mathcal{E}_A),
\]
the relative spectrum over $X_{\Phi}$.  By construction, $Z_A$ is a closed
subscheme of $T^*X_{\Phi}(\log D)$, finite over $X_{\Phi}$ when $A$ is
finite dimensional.

\begin{cor}
\label{cor:coisotropic-spectrum}
Let $\varphi : U(\ft^{trig}_\Phi)\to A$ be an algebra homomorphism.
Let $\mathcal{E}_A$ be the corresponding sheaf of commutative
$\mathcal{O}_{{X_{\Phi}}}$–algebras and $Z_A := \Spec_{{X_{\Phi}}}(\mathcal{E}_A)$ its relative
spectrum.

\begin{enumerate}
    \item $Z_A$ is a coisotropic subscheme of the Poisson variety
    $T^*X_{\Phi}(\log D)$.

    \item If $A$ is finite dimensional, then the projection $Z_A\to X_{\Phi}$ is a finite morphism, and   $Z_A|_{T^{reg}}$ is a Lagrangian
    subvariety of the symplectic manifold
    $T^*T^{reg} \subset T^*X_{\Phi}(\log D)$.
\end{enumerate}
\end{cor}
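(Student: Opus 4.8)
The plan is to deduce both assertions from the Poisson-ideal statement of Corollary~\ref{cor:Poisson-kernel}, the identification $\mathcal{Q}\cong T_{X_\Phi}(-\log D)$ of Theorem~\ref{th:bundle}, and the finite-dimensionality of $A$, together with the elementary symplectic fact that a coisotropic subvariety of half the ambient dimension is Lagrangian. I would not prove anything genuinely new; rather, the argument assembles these ingredients.

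For the first statement, I would argue as follows. Under the identification $S\mathcal{Q}\simeq\mathcal{O}\bigl(T^*X_\Phi(\log D)\bigr)$, the Poisson bracket on the total space is exactly the one induced by the Lie bracket on sections of $\mathcal{Q}=T_{X_\Phi}(-\log D)$ that already appears in Corollary~\ref{cor:Poisson-kernel}. By definition $Z_A$ is the zero locus of the ideal sheaf $\mathcal{I}:=\ker\bigl(\varphi\colon S\mathcal{Q}\to A\otimes\mathcal{O}_{X_\Phi}\bigr)$, so that $\mathcal{E}_A=S\mathcal{Q}/\mathcal{I}$. Corollary~\ref{cor:Poisson-kernel} gives $\{\mathcal{I},\mathcal{I}\}\subset\mathcal{I}$, which is precisely the defining condition for $Z_A$ to be coisotropic in $T^*X_\Phi(\log D)$.

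For the second statement I would first establish finiteness. Since $\dim A<\infty$, the sheaf $A\otimes\mathcal{O}_{X_\Phi}$ is locally free of rank $\dim A$, and $\mathcal{E}_A$ is its coherent image subsheaf on the Noetherian variety $X_\Phi$; being a sheaf of $\mathcal{O}_{X_\Phi}$-algebras finite as a module, its relative spectrum $Z_A\to X_\Phi$ is a finite morphism. Moreover each fibre is the spectrum of a nonzero finite-dimensional commutative algebra (a quotient of the symmetric algebra of $Q(x)$ mapping into $A$), which is Artinian and hence nonempty, so the morphism is also surjective. To obtain the Lagrangian property I then restrict to $T^{reg}$, where the boundary divisor $D$ is absent, so that $T^*X_\Phi(\log D)|_{T^{reg}}=T^*T^{reg}$ is the ordinary cotangent bundle, symplectic of dimension $2n$ with $n=\rk\fg$. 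Coisotropy is inherited by open restrictions, so $Z_A|_{T^{reg}}$ is coisotropic, whence each of its irreducible components has dimension at least $n$; finiteness forces dimension at most $n$, so $Z_A|_{T^{reg}}$ has pure dimension $n$, exactly half of $2n$. At a smooth point the coisotropy condition then forces the tangent space to coincide with its symplectic orthogonal, both of dimension $n$, so the symplectic form vanishes on the dense smooth locus and $Z_A|_{T^{reg}}$ is Lagrangian.

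The one delicate point, and the main obstacle, is the final implication ``coisotropic of half-dimension $\Rightarrow$ Lagrangian'' for a possibly singular or non-reduced $Z_A$: one must confirm that the dimension count from finiteness is genuine, so that the smooth locus is dense and of the expected dimension $n$, and translate the scheme-theoretic Poisson-ideal condition $\{\mathcal{I},\mathcal{I}\}\subset\mathcal{I}$ into pointwise tangent-space coisotropy on that smooth locus. Since all the structural inputs are already in place, this amounts to carefully assembling standard facts rather than to proving something essentially new.
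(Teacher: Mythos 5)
Your proposal is correct and follows essentially the same route as the paper: part (1) is deduced directly from Corollary~\ref{cor:Poisson-kernel} via the identification $S\mathcal{Q}\simeq\mathcal{O}\bigl(T^*X_{\Phi}(\log D)\bigr)$, and part (2) combines finiteness over $X_\Phi$ (the paper phrases this via finiteness of the joint spectrum of $\mathcal{Q}_x$ acting on $A$, you via coherence of $\mathcal{E}_A$ inside the locally free sheaf $A\otimes\mathcal{O}_{X_\Phi}$ --- the same content) with the half-dimensional-coisotropic-implies-Lagrangian argument over $T^{reg}$. The extra care you take with surjectivity and the dimension bounds only fills in details the paper leaves implicit.
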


\begin{proof}
(1) By Corollary~\ref{cor:Poisson-kernel}, the kernel of the surjection
$S\mathcal{Q}\twoheadrightarrow\mathcal{E}_A$ is an ideal closed with respect to the  Poisson bracket.  Under the
identification $S\mathcal{Q}\simeq\mathcal{O}(T^*X_{\Phi}(\log D))$,
such ideals correspond exactly to defining ideals of coisotropic subschemes.
Thus $Z_A$ is coisotropic in $T^*X_{\Phi}(\log D)$.

(2) If $A$ is finite-dimensional then, for any $x\in X_\Phi$, the joint spectrum of elements of  $\mathcal{Q}_x$ regarded as operators of left multiplication on $A$ is finite, so the projection $Z_A\to X_{\Phi}$ is a finite morphism. In particular, the corresponding subscheme
$Z_A|_{T^{reg}}$ has dimension equal to $\dim T$, which is half of
$\dim T^*T^{reg}$.  Being coisotropic and of half dimension, 
$Z_A|_{T^{reg}}$ is Lagrangian.
\end{proof}
















\noindent\footnotesize{
{\bf Aleksei Ilin} \\
Higher School of Modern Mathematics, MIPT, Russia \\
HSE University, Moscow, Russia \\
{\tt alex.omsk2@gmail.com}}\\

\noindent\footnotesize{
{\bf Leonid Rybnikov} \\
Department of Mathematics and Statistics,
University of Montreal, Montreal QC, Canada\\
{\tt leo.rybnikov@gmail.com}}


\begin{thebibliography}{99}

\bibitem[AFV1]{afv}
L. Aguirre, G. Felder, A. Veselov,
{\em Gaudin subalgebras and stable rational curves.}
Compositio Mathematica, {\bf 147} (2011), no.~5, pp.~1463--1478.

\bibitem[AFV2]{afv2}
L. Aguirre, G. Felder, A. Veselov,
{\em Gaudin subalgebras and wonderful models.}
Selecta Mathematica (N.S.), {\bf 22} (2016), no.~3, pp.~1057--1071.

\bibitem[ATL]{at}
A. Appel, V. Toledano Laredo,
{\em Pure braid group actions on category $\mathcal{O}$ modules.}
Pure and Applied Mathematics Quarterly, {\bf 20} (2024), no.~1, pp.~29--79.

\bibitem[BDS]{BDS}
A. Borel, J. De Siebenthal,
{\em Les sous-groupes fermés de rang maximum des groupes de Lie clos.}
Commentarii Mathematici Helvetici, {\bf 23} (1949), pp.~200--221.

\bibitem[BMO]{bmo10}
A. Braverman, D. Maulik, A. Okounkov,
{\em Quantum cohomology of the Springer resolution.}
Advances in Mathematics, {\bf 227} (2011), no.~1, pp.~421--455.

\bibitem[D]{drin}
Vladimir G. Drinfeld,
{\em Hopf algebras and the quantum Yang--Baxter equation},
Soviet Mathematics Doklady, {\bf 32} (1985), no.~1, 254--258.

\bibitem[DCG]{cg}
C. De Concini, G. Gaiffi,
{\em Projective Wonderful Models for Toric Arrangements.}
Advances in Mathematics, {\bf 327} (2018), pp.~390--409.

\bibitem[DCP1]{DCP}
C. De Concini, C. Procesi,
{\em Wonderful models of subspace arrangements.}
Selecta Mathematica (N.S.), {\bf 1} (1995), no.~3, pp.~459--494.

\bibitem[DCP2]{DCP2}
C. De Concini, C. Procesi,
{\em Hyperplane Arrangements and Holonomy Equations.}
Selecta Mathematica (N.S.), {\bf 1} (1995), no.~3, pp.~495--535.

\bibitem[G1]{g1} M. Gaudin, Diagonalisation d'une classe d'Hamiltoniens de spin, \textit{J. Physique} \textbf{37} (1976), no.10, 1089--1098.

\bibitem[G2]{g2} M. Gaudin, La fonction d’onde de Bethe, \textit{Collect. Commissariat \'Energ. Atom. S\'er. Sci.} Masson, Paris, 1983. 

\bibitem[HKRW]{hkrw}
I. Halacheva, J. Kamnitzer, L. Rybnikov, A. Weekes,
{\em Crystals and monodromy of Bethe vectors.}
Duke Mathematical Journal, {\bf 169} (2020), no.~12, pp.~2337--2419.

\bibitem[HLLY]{hlly}
I.~Halacheva, A.~Licata, I.~Losev, and O.~Yacobi,
{\em Categorical braid group actions and cactus groups},
Advances in Mathematics, {\bf 429} (2023), 109190,
doi:10.1016/j.aim.2023.109190.

\bibitem[I]{ilin}
A. Ilin,
{\em The maximality of certain commutative subalgebras in Yangians.}
Functional Analysis and Its Applications, {\bf 53} (2019), no.~4, pp.~309--312.

\bibitem[IKLPR]{iklpr}
A.~Ilin, J.~Kamnitzer, Y.~Li, P.~Przytycki, and L.~Rybnikov,
{\em The moduli space of cactus flower curves and the virtual cactus group},
arXiv:2308.06880 (2023).

\bibitem[IKR]{ikr}
A. Ilin, J. Kamnitzer, L. Rybnikov,
{\em Gaudin models and moduli space of flower curves.}
arXiv:2407.06424.

\bibitem[IMR]{imr}
A. Ilin, I. Mashanova-Golikova, L. Rybnikov,
{\em Spectra of Bethe subalgebras of $Y(\mathfrak{gl}_n)$ in tame representations.}
Letters in Mathematical Physics, {\bf 112} (2022), no.~5, Article~99.

\bibitem[IR]{ir}
A. Ilin, L. Rybnikov,
{\em Degeneration of Bethe subalgebras in the Yangian of $\mathfrak{gl}_n$.}
Letters in Mathematical Physics, {\bf 108} (2018), no.~4, pp.~1083--1107.

\bibitem[IR2]{ir2}
A.~Ilin and L.~Rybnikov,
{\em Bethe subalgebras in Yangians and the wonderful compactification},
Communications in Mathematical Physics, {\bf 372} (2019), no.~2, pp.~343–366.
doi:10.1007/s00220-019-03509-1.

\bibitem[Ka]{K}
M. Kapranov,
{\em Chow quotients of Grassmannians I.}
Journal of Algebraic Geometry, {\bf 4} (1995), no.~3, pp.~533--545. 
(arXiv:alg-geom/9210002v1.)

\bibitem[Ko]{k}
T. Kohno,
{\em Integrable connections related to Manin and Schechtman's higher braid groups.}
Illinois Journal of Mathematics, {\bf 34} (1990), no.~2, pp.~476--484.

\bibitem[KMR]{kmr}
V.~Krylov, I.~Mashanova-Golikova, and L.~Rybnikov,
{\em Bethe subalgebras in Yangians and Kirillov–Reshetikhin crystals},
arXiv:2212.11995 (2022).

\bibitem[KR]{kr}
J.~Kamnitzer and L.~Rybnikov,
{\em Cactus flower spaces and monodromy of Bethe vectors},
arXiv:2507.12829 (2025).

\bibitem[KTWWY]{ktwwy}
J. Kamnitzer, P. Tingley, B. Webster, A. Weekes, O. Yacobi,
{\em Highest weights for truncated shifted Yangians and product monomial crystals.}
Compositio Mathematica, {\bf 155} (2019), no.~9, pp.~1617--1652. 
(arXiv:1511.09131v2.)

\bibitem[L]{l}
E. Looijenga,
{\em Arrangements, KZ systems and Lie algebra homology.}
In: Singularity Theory (Liverpool, 1996), 
London Mathematical Society Lecture Note Series {\bf 263}, 
Cambridge University Press, 1999, pp.~109--130.

\bibitem[LM]{LM}
A. Losev, Y. Manin,
{\em New moduli spaces of pointed curves and pencils of flat connections.}
Michigan Mathematical Journal, {\bf 48} (2000), no.~1, pp.~443--472.

\bibitem[M]{m}
L. Moci,
{\em Wonderful models for toric arrangements.}
International Mathematics Research Notices, {\bf 2012} (2012), no.~8, pp.~213--238.

\bibitem[MO]{mo} D. Maulik, A. Okounkov {\em Quantum Groups and Quantum Cohomology.} arXiv:1211.1287

\bibitem[O]{oshima}
T. Oshima,
{\em A classification of subsystems of a root system.}
Publications of the Research Institute for Mathematical Sciences, Kyoto University,
{\bf 12} (1976), no.~2, pp.~387--438.

\bibitem[P]{jp}
J. Peters, {\em Compactifying the Parameter Space for the Quantum
Multiplication for Hypertoric Varieties.} 	arXiv:2510.15687

\bibitem[T]{tl}
V. Toledano Laredo,
{\em The trigonometric Casimir connection of a simple Lie algebra.}
Journal of Algebra, {\bf 329} (2011), pp.~286--327.

\bibitem[W]{wend}
C. Wendlandt,
{\em The $R$-matrix presentation for the Yangian of a simple Lie algebra.}
Communications in Mathematical Physics, {\bf 363} (2018), no.~1, pp.~289--332.

\end{thebibliography}
\end{document}